\documentclass{amsart}

\usepackage{graphicx} % Required for inserting images
\usepackage{todonotes}
\usepackage[margin=1.3in]{geometry}
\usepackage[english]{babel}
\usepackage{hyperref,amssymb,amsmath,amsthm,color,mathtools,tikz,subfig,dirtytalk,csquotes,soul,orcidlink,cleveref}
\usetikzlibrary{arrows}

\numberwithin{equation}{section}

\newtheorem{example}{Example}[section]

\newtheorem{dhef}[example]{Definition}
\newtheorem{lemma}[example]{Lemma}
\newtheorem{remark}[example]{Remark}
\newtheorem{prop}[example]{Proposition}
\newtheorem{theo}[example]{Theorem}
\newtheorem{cor}[example]{Corollary}

\def\elle#1{L^{#1}}
\def\elleom#1{L^{#1}(\Omega)}
\def\elleomT#1{L^{#1}(\Omega_T)}

\def\Sobom#1{W^{1,#1}_0(\Omega)}

\def\H{H^1_0}
\def\Hom{H^1_0(\Omega)}

\def\io{\int_\Omega}
\def\iot{\int_{\Omega_t}}
\def\ioT{\int_{\Omega_T}}
\def\norma#1#2{ \|#1 \|_{#2}}

\def\N{\mathbb{N}}

\def\R{\mathbb{R}}

\newcommand{\abs}[1]{\left|#1\right|}

\DeclareRobustCommand{\rchi}{{\mathpalette\irchi\relax}}
\newcommand{\irchi}[2]{\raisebox{\depth}{$#1\chi$}} % inner command, used by \rchi

\title{Existence and regularity of solutions to parabolic-elliptic nonlinear systems}

\author{Marco Picerni
\orcidlink{0009-0004-4364-4831}}\email{mpicerni@sissa.it}
\address{SISSA, via Bonomea 265, 34136, Trieste, Italy}

\begin{document}

\begin{abstract}
    In this paper, we study the existence and summability of the solutions to the following parabolic-elliptic system of partial differential equations with discontinuous coefficients:
\begin{equation*}
    \left\{\begin{array}{cc}
    u_t-\operatorname{div}(A(x, t) \nabla u)=-\operatorname{div}(u M(x) \nabla \psi)+f(x,t) & \text { in } \Omega_T, \\
    -\operatorname{div}(M(x) \nabla \psi)=|u|^\theta & \text { in } \Omega_T, \\
    \psi(x, t)=0 & \text { on } \partial\Omega\times(0, T), \\
    u(x, t)=0 & \text { on } \partial\Omega\times(0, T), \\
    u(x, 0)=0 & \text { in } \Omega
    \end{array}\right.
\end{equation*}
Here $\Omega$ is an open and bounded subset of $\R^N$, $N>2$, $\theta\in(0,\frac{2}{N})$ and $0<T<+\infty$. 

We prove existence results for data $f\in\elleomT 1$ and a corresponding increase in summability that obeys the $L^p$-regularity theorems for parabolic equations found in \cite{Aronson-Serrin} and \cite{BDGO-parabolic}. Moreover, despite the term $u M(x)\nabla\psi$ not being regular enough, the solution $u$ belongs to $\elleomT s\cap \elle q(0,T;\Sobom q)$ for suitable $s>1$ and $q>1$.
\end{abstract}

\maketitle

\section{Introduction and main results}

\subsection{Introduction}

Consider the following parabolic-elliptic system with zero Dirichlet boundary conditions
\begin{equation}\label{eq:KS-Sys-ParabEll}
    \left\{\begin{array}{cc}
    u_t-\operatorname{div}(A(x, t) \nabla u)=-\operatorname{div}(u M(x) \nabla \psi)+f(x,t) & \text { in } \Omega_T, \\
    -\operatorname{div}(M(x) \nabla \psi)=|u|^\theta & \text { in } \Omega_T, \\
    \psi(x, t)=0 & \text { on } \partial\Omega\times(0, T), \\
    u(x, t)=0 & \text { on } \partial\Omega\times(0, T), \\
    u(x, 0)=0 & \text { in } \Omega.
    \end{array}\right.
\end{equation}
Here, $N\geq3$, $\Omega$ is an open and bounded subset of $\R^N$, $T>0$ and $\Omega_T=\Omega\times(0,T)$. We assume that the matrix-valued measurable functions $A(x,t)$ and $M(x)$ are bounded and uniformly elliptic, that is:
$$\alpha\abs{\xi}^2\leq A(x,t)\xi\cdot\xi\leq \beta\abs{\xi}^2 \quad\forall \xi\in\R^N, \quad\text{for a.e. } (x,t)\in\Omega_T,$$
$$\alpha\abs{\xi}^2\leq M(x)\xi\cdot\xi\leq \beta\abs{\xi}^2 \quad\forall \xi\in\R^N, \quad\text{for a.e. } x\in\Omega,$$
for some $\beta\geq\alpha>0$. Moreover, $f$ is a nonnegative function which belongs to $\elleomT1$, and $\theta\in(0,\frac2N)$.

System \eqref{eq:KS-Sys-ParabEll} is loosely related to the minimal version of the Keller-Segel system \cite{UsersGuideChemo}, which is a fully parabolic system used to describe chemotaxis. Our system is a discontinuous-coefficient version of such model with an added source term $f\geq 0$ and zero Dirichlet boundary conditions. Furthermore, the second equation of our system is elliptic and the dependence on $u$ in the second term is sublinear (recall that $N>2$, which implies $\theta\leq\frac2N<1$), while the original model features a linear dependence on $u$ in the second equation.

This system has been studied by Boccardo, Orsina and Porzio in \cite{KSParabEll} (see also \cite{el_moutaouakil_numerical_2025}) with $f=0$ and Dirichlet boundary conditions
\begin{equation}
    \left\{\begin{array}{cc}
    \psi(x, t)=0 & \text { on } \partial\Omega\times(0, T), \\
    u(x, t)=0 & \text { on } \partial\Omega\times(0, T), \\
    u(x, 0)=u_0(x) & \text { in } \Omega,
    \end{array}\right.
\end{equation}
for any initial datum $u_0\in\elleom1$. 
Under these assumptions, the authors proved the existence of distributional solutions $(u,\psi)$ such that $u\in \elle\infty(t,T;\elleom q)$ for every $q\in(1,+\infty)$ and $t>0$, and satisfying the bound (also known as \emph{supercontractive estimate} \cite{porzio_decay_2009,porzio_uniform_2015})
\begin{equation}
    \norma{u(t)}{\elleom q}\leq C_q\,\frac{\norma{u_0}{\elleom1}}{t^{\frac{N}{2}\left(1-\frac1q\right)}}.
\end{equation}
Note that such estimate is very close to the one in place for solutions of the heat equation with $\elleom1$ data, which is
\[
\norma{u(t)}{\elleom q}\leq C\,\frac{\norma{u_0}{\elleom1}}{t^{\frac{N}{2}}}.
\]
On the other hand, it was proved in \cite{BOPorretta} that, for equations of the type 
\begin{equation}
    u_t-\operatorname{div}(A(x, t) \nabla u)=-\operatorname{div}(u E(x,t))\quad \text{with } E\in\left(\elleomT2\right)^N,
\end{equation}
it is only possible to find a solution $u\in\elleomT1$ such that 
\[\log(1+\abs{u})\in\elle2(0,T;\Hom)\]
(see also \cite{BoccardoJDE} for a similar result in the elliptic case).
This, together with the fact that the vector field $E(x,t)=M(x) \nabla \psi(x,t)$ only belongs to $\left(\elleomT2\right)^N$, indicates that the interplay between the two equations in \eqref{eq:KS-Sys-ParabEll} allows to improve the summability that the first equation would lead to.

A similar phenomenon was observed by Boccardo and Orsina in \cite{BO-Kellersegel}, where the stationary version of \eqref{eq:KS-Sys-ParabEll}
\begin{equation}\label{eq:intro:KS:elliptic}
    \left\{\begin{array}{cc}
    -\operatorname{div}(A(x) \nabla u)+u=-\operatorname{div}(u M(x) \nabla \psi)+f(x) & \text { in } \Omega, \\
    -\operatorname{div}(M(x) \nabla \psi)=|u|^\theta & \text { in } \Omega, \\
    \psi=0 & \text { on } \partial\Omega, \\
    u=0 & \text { on } \partial\Omega
    \end{array}\right.
\end{equation}
was studied. Here, the interplay between the two equations allows to prove the existence of solutions which obey the $\elle p$-regularity theorems proved by Stampacchia and Boccardo--Gallou\"et \cite{BoccardoGallouet,StampacchiaElliptic} (note that this is a great improvement over the summability attained by considering the first equation alone \cite{BoccardoJDE}). Further analyses of systems related to \eqref{eq:intro:KS:elliptic} have been carried out in  \cite{boccardo2022EllipticChemotaxisSystem,boccardo2026ExistenceResultsNonlinear,boccardo_elliptic_2023}.

\subsection{Main results}

Motivated by \cite{BO-Kellersegel,KSParabEll}, we establish existence and summability results for solutions $(u,\psi)$ to \eqref{eq:KS-Sys-ParabEll}. The notion of solution and the summability attained depend on the integrability of the datum $f$.

Throughout the paper, $p^\star$ and $p^{\star\star}$ will denote the real numbers which satisfy (for $p<N+2$ and $p<\frac{N+2}{2}$, respectively) 
$$\frac{1}{p^\star}=\frac{1}{p}-\frac{1}{N+2}\quad\text{ and }\quad \frac{1}{p^{\star\star}}=\frac{1}{p}-\frac{2}{N+2}.$$ 
This choice of notation is suggested by the results of Aronson--Serrin \cite{Aronson-Serrin} and Boccardo--Dall'Aglio--Gallou\"et--Orsina \cite{BDGO-parabolic}, where $L^p$ regularity results for parabolic equations have been established, and the fact that these results are formally equivalent to the ones for elliptic equations (up to writing $N+2$ instead of $N$ in the Sobolev exponents).

This paper contains four main theorems, which are ordered depending on the summability requirements on the datum $f$ (the first being the one with the highest requirements and the last dealing with the case $f\in\elleomT1$). We begin by stating Theorem \ref{Main_Theorem_1}, establishes existence of bounded weak solutions to \eqref{eq:KS-Sys-ParabEll}.

\begin{theo}\label{Main_Theorem_1}
    Let $f\in\elle m(\Omega_T)$ with $m>\frac{N+2}{2}$, then there exist $u,\psi$ which solve \eqref{eq:KS-Sys-ParabEll}. More precisely, there exist 
    $$u\in C^0([0,T];\elleom2)\cap\elle2(0,T;\Hom)\cap\elle\infty(\Omega_T)\text{ such that }u_t\in\elle2(0,T;H^{-1}(\Omega))$$
     and 
    $$\psi\in C^0([0,T];\Hom)\cap\elle\infty(\Omega_T)$$ such that $u\geq0$, $\psi\geq0$ and, for all $\varphi\in\elle2(0,T;\Hom)$ and all $v\in\Hom$,
    \begin{equation}\label{DefSol_parabell_theo1}
        \begin{dcases}
                \int_0^T {}_{H^{-1}}\langle u_t,\varphi\rangle_{\H}+\ioT A(x,t)\nabla u\cdot \nabla \varphi = \ioT uM(x)\nabla \psi\cdot\nabla\varphi + \ioT f(x,t)\varphi\\
                \io M(x)\nabla \psi(t)\cdot\nabla v = \io u(t)^\theta v\quad\forall\,t\in(0,T).
        \end{dcases}
    \end{equation}
    Moreover, the initial condition is satisfied in the strong sense for both $u$ and $\psi$, that is $u(0)=0$ and $\psi(0)=0$ almost--everywhere in $\Omega$.
\end{theo}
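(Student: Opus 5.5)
We will argue by approximation and a priori estimates, with the key step being an $\elle\infty$ bound on $u$ obtained from the interplay between the two equations.

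\textbf{Step 1: truncated system.} For $n\in\N$ let $T_n(s)=\max(-n,\min(n,s))$ and consider
\begin{equation*}
u_{n,t}-\operatorname{div}(A\nabla u_n)=-\operatorname{div}\bigl(T_n(u_n)M\nabla\psi_n\bigr)+f,\qquad -\operatorname{div}(M\nabla\psi_n)=|T_n(u_n)|^\theta .
\end{equation*}
Existence of $u_n\in\elle2(0,T;\Hom)$ with $u_{n,t}\in\elle2(0,T;H^{-1}(\Omega))$ follows from a Schauder fixed point argument. Given $w\in\elleomT2$, solve the elliptic problem for each $t$. Since $|T_n(w)|^\theta\le n^\theta$, we get $\psi\in\elle\infty(0,T;\Hom)$, and by Stampacchia $\nabla\psi$ is bounded in $\elle2$ uniformly in $w$. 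Then solve the linear parabolic equation with the bounded field $T_n(w)M\nabla\psi\in\elleomT2$. The standard energy estimate and Aubin--Lions compactness give a compact, continuous map on a ball of $\elleomT2$.

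\textbf{Step 2: sign.} Testing with $u_n^-=\min(u_n,0)$ shows $u_n\ge0$: one has $f\ge0$, and on $\{u_n<0\}$ the drift term $T_n(u_n)M\nabla\psi_n\cdot\nabla u_n^-$ is handled by Gronwall, or, more simply, by truncating with $T_n(u_n^+)$ in the approximation so that the drift vanishes there. Since $|u_n|^\theta\ge0$, the maximum principle gives $\psi_n\ge0$.

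\textbf{Step 3: key a priori estimates.} We test the first equation with $G_k(u_n)=(u_n-k)^+$ and, following Boccardo--Orsina, rewrite the drift via the second equation:
\begin{equation*}
\ioT T_n(u_n)M\nabla\psi_n\cdot\nabla G_k(u_n)=\ioT M\nabla\psi_n\cdot\nabla H(u_n),
\end{equation*}
where $H'(s)=T_n(s)\rchi_{\{s>k\}}$. Using $H(u_n)$ (which vanishes on $\partial\Omega$) as a test function in the elliptic equation turns this into
\begin{equation*}
\ioT |T_n(u_n)|^\theta H(u_n)\le \ioT u_n^{\theta}\,u_n\,G_k(u_n),
\end{equation*}
which is a lower-order term of order $2+\theta$, with no gradient of $\psi$ left. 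Two ingredients handle it. First, an $\elle\infty(0,T;\elleom1)$ bound on $u_n$, obtained by testing with $T_1(G_k(u_n))/\dots$ or by integrating the equation; this is the parabolic analogue of the $\elleom1$ estimate in \cite{KSParabEll}. Second, interpolation: $u^\theta$ with $\theta<2/N$ is controlled in $\elle{q}$ for $q$ large relative to the parabolic Gagliardo--Nirenberg exponent, since $\elle\infty(\elle1)\cap\elle2(\H)\subset \elle{2+4/N}$-type spaces, roughly $\elle{1+2/N}$ for the $\elle1$-based bound. One then absorbs this term, for large $k$, into $\frac12\sup_t\io G_k(u_n)^2+\alpha\ioT|\nabla G_k(u_n)|^2$.

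With the term absorbed, a Stampacchia/Aronson--Serrin iteration on the level sets, using $f\in\elle m$ with $m>\frac{N+2}{2}$, gives $\norma{u_n}{\elle\infty}\le C$ independent of $n$. This step, making the coupling term genuinely lower order via $\theta<2/N$, is where I expect the main obstacle: the exponents must close in the De Giorgi iteration. If direct absorption fails, I would instead view $b=u_n^\theta$ as a zero-order potential in $\elle r$ with $r>\frac{N+2}{2}$, bootstrapping from the $\elle1$ bound.

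\textbf{Step 4: passage to the limit.} Once $\norma{u_n}{\elle\infty}\le C$, for $n>C$ the truncations disappear. Then $\psi_n$ is bounded in $\elle\infty(\Omega_T)$ (Stampacchia, since $u_n^\theta$ is bounded) and in $\elle\infty(0,T;\Hom)$. The energy estimates give $u_n$ bounded in $\elle2(\H)$ and $u_{n,t}$ bounded in $\elle2(H^{-1})$. Aubin--Lions yields $u_n\to u$ strongly in $\elleomT2$ and a.e. Consequently $u_n^\theta\to u^\theta$ in $\elle2(\Omega)$ for a.e. $t$, and by linearity and ellipticity $\nabla\psi_n\to\nabla\psi$ strongly in $\elleomT2$. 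The product $u_nM\nabla\psi_n$ then converges strongly in $\elleomT2$ because $u_n$ is bounded and converges a.e., so we can pass to the limit in both equations.

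\textbf{Step 5: continuity and initial data.} $u\in C^0([0,T];\elleom2)$ follows from the standard Lions--Magenes embedding, with $u(0)=0$. For $\psi$, the elliptic estimate $\norma{\psi(t)-\psi(s)}{\Hom}\le C\norma{u(t)^\theta-u(s)^\theta}{H^{-1}}$ together with $|a^\theta-b^\theta|\le|a-b|^\theta$ and the continuity of $u$ gives $\psi\in C^0([0,T];\Hom)$ and $\psi(0)=0$.
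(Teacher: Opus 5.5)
Your Steps 1, 2, 4 and 5 are essentially sound, and rewriting the drift through the elliptic equation is exactly the paper's device. The genuine gap is Step 3, the $n$-uniform $\elle\infty$ bound. This is the heart of the theorem, and you leave it open.

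After the rewriting, the coupling term is bounded by $\ioT u_n^\theta\bigl(\tfrac12 G_k(u_n)^2+k\,G_k(u_n)\bigr)$, and only the quadratic part can be absorbed. At each time, H\"older and Sobolev give $\io u_n^\theta G_k(u_n)^2\le C\bigl(\int_{\{u_n(t)\ge k\}}u_n^{\theta N/2}\bigr)^{2/N}\io|\nabla G_k(u_n)|^2$. Moreover $\int_{\{u_n(t)\ge k\}}u_n^{\theta N/2}\le k^{\theta N/2-1}\norma{f}{\elleomT1}$ is small for large $k$ precisely because $\theta<\frac2N$.

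The remainder $k\ioT u_n^\theta G_k(u_n)$ is linear in $G_k$, so it cannot be absorbed; it is a level-dependent source. From the $\elle\infty(0,T;\elleom1)$ bound alone, $u_n^\theta$ is bounded in $\elleomT{1/\theta}$. A Stampacchia iteration with a source in $\elleomT r$ closes only if $r>\frac{N+2}{2}$, i.e.\ only if $\theta<\frac{2}{N+2}$. So for $\frac{2}{N+2}\le\theta<\frac2N$ your scheme does not close as written. You would need a mixed-norm, Ladyzhenskaya-type iteration exploiting the $\elle\infty$-in-time information, which you do not set up.

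Your fallback has the same hole. To have $b=u_n^\theta$ bounded in $\elle r$ with $r>\frac{N+2}{2}$, you need $u_n$ bounded in $\elleomT{\theta r}$ with $\theta r>1$. The ``bootstrap from the $L^1$ bound'' is exactly the missing estimate.

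The missing idea is a preliminary higher-integrability estimate, proved before any level-set iteration. The paper tests the elliptic equation with $F_n(u_n)=\int_0^{u_n}\sigma^{2\gamma-2}T_n(\sigma)\,d\sigma$ and the parabolic one with $u_n^{2\gamma-1}$. The drift is then bounded by a multiple of $\io u_n^{\theta+2\gamma}$. Splitting at a fixed large level $k$ and using the smallness above absorbs $\int_{\{u_n\ge k\}}u_n^{\theta+2\gamma}$ into $\io|\nabla u_n^\gamma|^2$, leaving only the constant $k^{\theta+2\gamma}|\Omega|$.

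The parabolic Gagliardo--Nirenberg inequality and H\"older with $f\in\elleomT{m}$ then bound $u_n$ in $\elleomT{m^{\star\star}}$ for $m$ arbitrarily close to $\frac{N+2}{2}$, hence in every $\elleomT p$, $p<\infty$. Only then is the Stampacchia iteration with $G_k(u_n)$ run. There $\tilde f_n=u_n^{1+\theta}+f$ is treated as a datum bounded in $\elleomT m$ with $m>\frac{N+2}{2}$, and the constant is independent of the level. This gives the uniform $\elle\infty$ bound on which your Step 4 relies.
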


If the summability of $f$ is not enough to guarantee the boundedness of the solution $u$, we prove that the summability of $u$ depends on the one of $f$, accordingly with the results of \cite{BDGO-parabolic}.

\begin{theo}\label{Main_Theorem_2}
    Let $f\in\elle m(\Omega_T)$ with $\frac{2N+4}{N+4}\leq m <\frac{N+2}{2}$, then there exist 
    $$u\in\elle\infty(0,T;\elleom{\frac{Nm}{N+2-2m}})\cap\elle2(0,T;\Hom)\cap \elleomT {m^{\star\star}}$$ 
    such that $u_t\in\elle1(0,T;W^{-1,1}(\Omega))$ 
    and $$\psi\in C^0([0,T];\Hom)\cap\elle\infty(\Omega_T)$$ such that $u\geq0$, $\psi\geq0$ and, for all $\varphi\in\elle\infty(0,T;\Sobom \infty)$ and all $v\in\Hom$,
    \begin{equation}\label{DefSol_parabell_theo2}
        \begin{dcases}
                \int_0^T \langle u_t,\varphi\rangle+\ioT A(x,t)\nabla u\cdot \nabla \varphi = \ioT uM(x)\nabla \psi\cdot\nabla\varphi + \ioT f(x,t)\varphi\\
                \io M(x)\nabla \psi(t)\cdot\nabla v = \io u(t)^\theta v\quad\forall\,t\in(0,T).
        \end{dcases}
    \end{equation}
    
    Finally, the initial condition for $u$ is satisfied in the following weak sense: there exists a sequence $\{u_n\}_n$ of functions in $C^0([0, T] ; L^2(\Omega)) \cap L^2(0, T ;\Hom)$ converging to $u$ in $\elleomT1$ such that $u_n(0)=0$, and $\psi(0)=0$ almost--everywhere in $\Omega$.
\end{theo}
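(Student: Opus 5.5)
We argue by approximation, using Theorem \ref{Main_Theorem_1} on truncated data. For $n\in\N$ set $f_n=T_n(f)$, where $T_n(s)=\min\{s,n\}$; then $0\le f_n\le f$ and $f_n\in\elle\infty(\Omega_T)$. Theorem \ref{Main_Theorem_1} gives nonnegative bounded solutions $(u_n,\psi_n)$ of \eqref{DefSol_parabell_theo1} with datum $f_n$, with $u_n(0)=0$, $\psi_n(0)=0$ and $u_n\in C^0([0,T];\elleom2)\cap\elle2(0,T;\Hom)$. This sequence is the one required by the weak initial condition. The whole task is to prove estimates on $(u_n,\psi_n)$ that are uniform in $n$ and then pass to the limit.

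\emph{Bound on $\psi_n$.} The key observation concerns the second equation. We have $u_n(t)^\theta\in\elleom{r/\theta}$ with norm $\norma{u_n(t)}{\elleom r}^\theta$. Because $\theta<\frac2N$, we can choose $r$ below $\frac{Nm}{N+2-2m}$ and still have $r/\theta>\frac N2$. Stampacchia's $L^\infty$ estimate then gives, for every $t$,
\[
\norma{\psi_n(t)}{\elle\infty(\Omega)}+\norma{\psi_n(t)}{\Hom}\le C\,\norma{u_n(t)}{\elleom r}^\theta .
\]
This converts a sublinear power of the $\elle\infty(0,T;\elleom r)$ norm of $u_n$ into a bound on $\psi_n$.

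\emph{Estimates on $u_n$.} We test the first equation with $\varphi=[(1+u_n)^{\gamma}-1]$, with $\gamma=\frac{Nm-N-2+2m}{N+2-2m}$ chosen as in the proof of \cite{BDGO-parabolic} (if $\gamma<1$, replace $\varphi$ by a suitable power of $1+u_n$). The drift term is the delicate one. Written in terms of $w=(1+u_n)^{(\gamma+1)/2}$, it is
\[
\ioT u_nM\nabla\psi_n\cdot\nabla\varphi=\ioT M\nabla\psi_n\cdot\nabla G(u_n).
\]
Using the second equation, this becomes $\ioT u_n^\theta G(u_n)$, where $G(s)=\int_0^s\sigma\,\gamma(1+\sigma)^{\gamma-1}d\sigma$. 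Since $G(s)\lesssim (1+s)^{\gamma+1}$, the term is bounded by $\ioT (1+u_n)^{\gamma+1+\theta}$. The bound requires $\psi_n\ge0$ and $M$ acting on $\psi_n$ only, so the identity is exact.

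\emph{Absorbing the drift term.} By H\"older's inequality together with the parabolic Gagliardo--Nirenberg inequality, $\ioT(1+u_n)^{\gamma+1+\theta}$ is bounded by a power of
\[
\sup_t\io w^2+\ioT\abs{\nabla w}^2 .
\]
This power is strictly less than $1$ relative to the left-hand side, because $\theta<\frac2N$ makes the exponent $\gamma+1+\theta$ subcritical. The $f_n$ term is handled exactly as in \cite{BDGO-parabolic}, by H\"older with $m'$. Young's inequality then closes the estimate uniformly in $n$, with a constant depending on $\norma f{\elleomT m}$ and $T$. This yields uniform bounds in $\elle\infty(0,T;\elleom{\frac{Nm}{N+2-2m}})$ and in $\elleomT{m^{\star\star}}$. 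Testing with $u_n$ itself, and using $m\ge\frac{2N+4}{N+4}=(2^\star)'$ in the parabolic sense, we obtain $u_n$ bounded in $\elle2(0,T;\Hom)$; here the drift term is $\ioT u_n^{\theta}\frac{u_n^2}{2}$, controlled in the same way. Feeding these bounds back into the Stampacchia estimate gives $\psi_n$ bounded in $\elle\infty(\Omega_T)$ and in $\elle\infty(0,T;\Hom)$. I expect the closing of the drift term to be the main obstacle: one must check that the exponent bookkeeping really gives a strictly sublinear power for the whole range $m\in[\frac{2N+4}{N+4},\frac{N+2}{2})$.

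\emph{Passing to the limit.} From the equation, $(u_n)_t$ is bounded in $\elle2(0,T;H^{-1})+\elle1(\Omega_T)$. The Aubin--Simon lemma then gives strong $\elleomT1$ convergence of $u_n$, and in fact in $\elleomT s$ for every $s<m^{\star\star}$, along with weak convergence in $\elle2(0,T;\Hom)$. For the second equation, $u_n(t)^\theta\to u(t)^\theta$ in $\elleom1$ for a.e.\ $t$, which gives strong convergence of $\psi_n(t)$ in $\Hom$. Continuity of $\psi$ in time follows from continuity of $t\mapsto u(t)^\theta$ in a space $\elleom{r/\theta}$ with $r/\theta>\frac{2N}{N+2}$; equivalently, from the Lipschitz dependence of the $\Hom$ solution on the datum, combined with the time regularity of $u_n$ in $C^0([0,T];\elleom2)$ and the uniform bounds. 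For the product $u_nM\nabla\psi_n\cdot\nabla\varphi$ with $\varphi\in\elle\infty(0,T;\Sobom\infty)$, we use that $u_n\to u$ strongly in $\elleomT2$ (since $m^{\star\star}\ge2$ in this range) and $\nabla\psi_n\to\nabla\psi$ strongly in $\elleomT2$. Finally, $\psi(0)=0$ follows from $u(0)$ vanishing in the limit sense, or directly from the uniform continuity estimates on $\psi_n$.
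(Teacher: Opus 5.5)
\textbf{There is a genuine gap, and it is at the step you flag as the main obstacle: the drift term cannot be absorbed the way you describe.}

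Put $w=(1+u_n)^{\frac{\gamma+1}{2}}$ and $E_n=\sup_t\io w^2+\ioT\abs{\nabla w}^2$. Subcriticality of $\gamma+1+\theta$ relative to the parabolic Gagliardo--Nirenberg exponent $(\gamma+1)\frac{N+2}{N}$ only lets you interpolate between $\sup_t\io w^2$ and $\ioT w^{2\frac{N+2}{N}}\lesssim E_n^{\frac{N+2}{N}}$. That gives $\ioT(1+u_n)^{\gamma+1+\theta}\lesssim E_n^{\frac{\gamma+1+\theta}{\gamma+1}}$, a power \emph{larger} than $1$, so Young's inequality cannot close the estimate.

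No bound in terms of $E_n$ alone can do better. If you replace $u_n$ by $\lambda u_n$ and let $\lambda\to\infty$, the drift bound grows like $\lambda^{\gamma+1+\theta}$ while $E_n$ grows like $\lambda^{\gamma+1}$. The extra degree $\theta$ has to be paid by an a priori bound on $u_n$ that does not scale with $E_n$, and your plan contains none. Your $\psi_n$ bound does not help here, because you obtain it by feeding back the very $u_n$ estimates you are trying to prove.

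\textbf{The missing ingredient is the mass estimate} $\norma{u_n}{\elle{\infty}(0,T;\elleom{1})}\le\norma{f}{\elleomT{1}}$ (Lemma \ref{StimaL1ParabEll}). It holds because the drift is in divergence form. Test with $T_h(u_n)$: after Young's inequality the drift contributes at most $\frac{\beta^2h^2}{2\alpha}\io\abs{\nabla\psi_n(t)}^2$ plus a term absorbed by the diffusion. Dividing by $h$ and letting $h\to0$, the drift disappears and $\frac{d}{dt}\io u_n\le\io f$.

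With this bound the paper closes the estimate at each fixed time, as follows.
\begin{itemize}
\item It splits $\Omega$ into $\{u_n\le k\}$ and $\{u_n>k\}$.
\item It applies H\"older with exponent $\frac N2$ to the factor $u_n^\theta$ and the Sobolev inequality to the power of $u_n$.
\item It uses $\int_{\{u_n>k\}}u_n^{\theta\frac N2}\le k^{\theta\frac N2-1}\norma{f}{\elleomT{1}}$, which is small for $k$ large precisely because $\theta\frac N2<1$.
\end{itemize}
So the drift is bounded by a small multiple of the gradient term plus a constant. Equivalently, interpolating at each time between $\elleom{1}$ and $\elleom{(\gamma+1)\frac{N}{N-2}}$ bounds the drift by the power $\frac{N(\gamma+\theta)}{N\gamma+2}$ of $\io\abs{\nabla w}^2$ (up to constants), and this power is $<1$ if and only if $\theta<\frac2N$. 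This is where $\theta<\frac2N$ genuinely enters, not through subcriticality.

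The same mass bound gives the uniform $\elle{\infty}$ and $\Hom$ estimates on $\psi_n$ directly with $r=1$, with no feedback loop. Once this step is inserted, the rest of your plan runs parallel to the paper: approximation through Theorem \ref{Main_Theorem_1}, H\"older with $m'$ on the $f$ term, parabolic Gagliardo--Nirenberg, then compactness and passage to the limit.

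One smaller correction: $(u_n)_t$ also has a summand in $\elle{1}(0,T;W^{-1,1}(\Omega))$ coming from $\operatorname{div}(u_nM\nabla\psi_n)$. It is not bounded in $\elle{2}(0,T;H^{-1}(\Omega))+\elleomT{1}$ as you state.
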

\begin{remark}
    The choice of test functions belonging to $\elle\infty(0,T;\Sobom \infty)$ may seem somewhat restrictive since, arguably, the distributions that appear in the equation act on larger function spaces. Our choice, however, has the advantage of simplifying the notation while remaining in a Sobolev space context. Moreover, since the aforementioned larger spaces are of the form $\elle a(0,T;\elleomT b)$, one can recover the complete set of test functions arguing by density. 
\end{remark}
To prove the existence of a distributional solution with smooth test functions (such as in Theorem \ref{Main_Theorem_2}), we need the term $uM(x)\nabla\psi$ to belong to $\elleomT1$. For this to be possible, since we only know that $\nabla\psi$ belongs to $\left(\elleomT2\right)^N$, it is necessary for $u$ to belong to (at least) $\elleomT2$. Such requirement is equivalent to
$$m^{\star\star}=\frac{(N+2)m}{N+2-2m}\geq2,$$
which implies that $m$ shall be greater than $\frac{2N+4}{N+6}$ (recall that $m^{\star\star}$ is only defined for $m<\frac{N+2}{2}$). We thus have the following result

\begin{theo}\label{Main_Theorem_3}
     Let $f\in\elleomT m$ with $\frac{2N+4}{N+6}\leq m<\frac{2N+4}{N+4}$, then there exist 
     $$u\in\elle\infty(0,T;\elleom{\frac{Nm}{N+2-2m}})\cap\elle {m^\star}(0,T;\Sobom{m^\star})\cap \elleomT {m^{\star\star}}$$ such that $u_t\in\elle1(0,T,W^{-1,1}(\Omega))$ and $\psi\in C^0([0,T];\Hom)\cap\elle\infty(\Omega_T)$ such that $u\geq0$, $\psi\geq0$ and for all $\varphi\in\elle\infty(0,T;\Sobom \infty)$ and all $v\in\Hom$
    % $\varphi\in\elle2(0,T;\Sobom N)$ with $\varphi_t\in\elle2(0,T,W^{-1,\frac{N}{N-1}})$
    \begin{equation}\label{DefSol_parabell_theo3}
        \begin{dcases}
                \int_0^T \langle u_t,\varphi\rangle+\ioT A(x,t)\nabla u\cdot \nabla \varphi = \ioT uM(x)\nabla \psi\cdot\nabla\varphi + \ioT f(x,t)\varphi\\
                \io M(x)\nabla \psi(t)\cdot\nabla v = \io u(t)^\theta v\quad\forall\,t\in(0,T).
        \end{dcases}
    \end{equation}

    Finally, the initial condition for $u$ is satisfied in the following weak sense: there exists a sequence $\{u_n\}_n$ of functions in $C^0([0, T] ; L^2(\Omega)) \cap L^2(0, T ;\Hom)$ converging to $u$ in $\elleomT1$ such that $u_n(0)=0$, and $\psi(0)=0$ almost--everywhere in $\Omega$.
\end{theo}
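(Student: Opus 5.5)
We prove Theorem \ref{Main_Theorem_3} by approximation. We derive a priori estimates that do not rely on $u\in\elle2(0,T;\Hom)$, and then pass to the limit.

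\textbf{Approximate problems.} Let $f_n=T_n(f)$, so that $f_n\in\elle\infty(\Omega_T)$, $0\le f_n\le f$ and $f_n\to f$ in $\elleomT m$. By Theorem \ref{Main_Theorem_1} there are nonnegative bounded weak solutions $(u_n,\psi_n)$ of the system with datum $f_n$.

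\textbf{Estimates on $\psi_n$.} The key structural estimate comes from the second equation. Since $\theta<\frac2N$, we have $u_n^\theta\in\elle{r}(\Omega)$ with $r>\frac N2$ as soon as $u_n(t)$ is bounded in $\elleom{\theta r}$. The bound $\|u_n(t)\|_{\elleom1}\le\|f\|_{\elleomT1}$ is obtained by testing with $T_k(u_n)/k$ and letting $k\to0$; there the drift term gives $\frac1k\int_{\{u_n<k\}} u_nM\nabla\psi_n\nabla u_n\to0$. Taking $r=1/\theta>\frac N2$, Stampacchia's theorem then gives $\|\psi_n\|_{\elle\infty(\Omega_T)}\le C$ uniformly in $n$.

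\textbf{Estimates on $u_n$.} We test the first equation with $\varphi=[(1+u_n)^{1-\lambda}-1]$-type functions, $\lambda>0$, as in \cite{BDGO-parabolic}, or equivalently with $(1+u_n)^{\gamma}-1$ for suitable $\gamma$ dictated by $m$. The drift term is treated with the identity
\[
\ioT u_nM\nabla\psi_n\cdot\nabla G(u_n)=\ioT M\nabla\psi_n\cdot\nabla H(u_n),\qquad H'(s)=sG'(s),
\]
followed by the second equation:
\[
\ioT M\nabla\psi_n\cdot\nabla H(u_n)=\ioT u_n^\theta H(u_n)\ge0 .
\]
This has the good sign only if the drift enters with the sign giving $-\ioT u_n^\theta H(u_n)\le0$. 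The first equation has $-\operatorname{div}(uM\nabla\psi)$ on the right, so the weak formulation gives $+\ioT u_nM\nabla\psi_n\nabla\varphi=+\ioT u_n^\theta H(u_n)$ on the right-hand side, and this is the bad sign. It is controlled using $\theta<\frac2N$: the term $u_n^{\theta}H(u_n)\sim u_n^{\theta+\gamma+1}$ is of lower order than the Gagliardo--Nirenberg gain $\|u_n\|^{\gamma+1+2/N}$, so a Young/absorption argument on small time intervals (or a Gronwall argument using the $\elle\infty(0,T;\elleom1)$ bound) closes the estimate. Combining this with the classical interpolation of \cite{BDGO-parabolic}, we obtain uniform bounds of $u_n$ in $\elle\infty(0,T;\elleom{\frac{Nm}{N+2-2m}})\cap\elleomT{m^{\star\star}}$ and of $\nabla u_n$ in $\elleomT{m^\star}$. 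The exponent $m^\star\ge1$ holds since $m\ge\frac{2N+4}{N+6}>1$, and it is $<2$ since $m<\frac{2N+4}{N+4}$, which explains why we only reach $\elle{m^\star}(0,T;\Sobom{m^\star})$. The main obstacle is this step: choosing the power $\gamma<0$ so that the sign-bad drift contribution is absorbed while the gradient estimate is recovered through Hölder's inequality with weights $(1+u_n)^{\gamma-1}$.

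\textbf{Passage to the limit.} Testing the second equation with $\psi_n$ gives $\nabla\psi_n$ bounded in $\elle\infty(0,T;\elleom2)$. Since $m^{\star\star}\ge2$, the products $u_nM\nabla\psi_n$ are bounded in $\elleomT1$, and in fact in $\elleomT{q}$ for some $q>1$ when $m^{\star\star}>2$. The equation then gives $(u_n)_t$ bounded in $\elle1(0,T;W^{-1,1}(\Omega))+\elle1(\Omega_T)$, and Aubin--Lions--Simon yields $u_n\to u$ a.e. and strongly in $\elleomT{s}$ for all $s<m^{\star\star}$. From this, $u_n^\theta(t)\to u^\theta(t)$ strongly in $\elleom2$ for a.e. $t$, so $\nabla\psi_n\to\nabla\psi$ strongly in $\elleomT2$. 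Together with the strong convergence of $u_n$ this gives $u_nM\nabla\psi_n\to uM\nabla\psi$ in $\elleomT1$; in the endpoint case $m^{\star\star}=2$ we use Vitali's theorem with equi-integrability coming from the weighted estimates. The continuity $\psi\in C^0([0,T];\Hom)$ and the bound $\psi\in\elle\infty$ pass to the limit from the uniform estimates. Finally, the weak initial condition holds with the approximating sequence $u_n$ themselves, since $u_n(0)=0$ and $u_n\to u$ in $\elleomT1$.
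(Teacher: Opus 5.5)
Your overall plan is the paper's: approximate, obtain $\norma{u_n(t)}{\elleom1}\le\norma{f}{\elleomT1}$, get a uniform $\elleomT\infty$ bound on $\psi_n$ from Stampacchia with exponent $\frac1\theta>\frac N2$, test the first equation with a power of $u_n$, turn the drift into $\io u_n^\theta H(u_n)$ through the second equation and absorb it, then use parabolic Gagliardo--Nirenberg, the weighted H\"older step for $\nabla u_n$, Simon's compactness, and pass to the limit. Taking the approximations from Theorem \ref{Main_Theorem_1} with datum $T_n(f)$, instead of the truncated system \eqref{eq:approx_KS_system}, is a legitimate shortcut.

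\textbf{The gap.} The step you yourself call the main obstacle is wrong as stated: the exponent must not be negative. For $\gamma<0$ the test function $(1+u_n)^\gamma-1$ is bounded, so $f$ enters only through $\norma{f}{\elleomT1}$ and you are in the $L^1$ regime of \cite{BDGO-parabolic}. That family of tests gives at best $u_n$ bounded in $\elleomT p$ for $p<\frac{N+2}{N}$; this is exactly Lemma \ref{lemma:stima1**}. Since $\frac{N+2}{N}<2\le m^{\star\star}$, you would not even know that $u_nM\nabla\psi_n$ is bounded in $\elleomT1$, and the whole limit passage collapses.

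\textbf{The correct choice.} Take $G(s)=(1+s)^\gamma-1$ with $\gamma=\frac{(N+2)(m-1)}{N+2-2m}\in(0,1)$; this is the paper's $u_n^{2\gamma-1}$ with $\gamma=\frac{Nm}{2(N+2-2m)}$. Then:
\begin{itemize}
\item Since $\gamma m'=m^{\star\star}$, you get $\ioT f u_n^{\gamma}\le\norma{f}{\elleomT m}(\ioT u_n^{m^{\star\star}})^{1/m'}$.
\item Gagliardo--Nirenberg applied to $u_n^{(\gamma+1)/2}$ controls $\ioT u_n^{(\gamma+1)\frac{N+2}{N}}=\ioT u_n^{m^{\star\star}}$.
\item The estimate closes because $\frac{N+2}{Nm'}<1$ exactly when $m<\frac{N+2}{2}$.
\item The same $\gamma$ is what gives $(1-\gamma)\frac{m^\star}{2-m^\star}=m^{\star\star}$ in the weighted H\"older step with weights $(1+u_n)^{\gamma-1}$.
\end{itemize}

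\textbf{Absorption of the drift.} This is not a Gronwall or small-time-interval argument; Gronwall cannot bound the superlinear $\io u_n^{\theta+\gamma+1}$ by $\io u_n^{\gamma+1}$. It is done at each fixed $t$. Split at a level $k$, and on $\{u_n(t)\ge k\}$ use H\"older with exponent $\frac N2$ followed by Sobolev. Then use $\int_{\{u_n(t)\ge k\}}u_n^{\theta N/2}\le k^{\theta N/2-1}\norma{f}{\elleomT1}$. Because $\theta\frac N2<1$, this is small for large $k$, uniformly in $t$ and $n$, so the drift is absorbed into the gradient term.

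\textbf{Continuity of $\psi$.} The claims $\psi\in C^0([0,T];\Hom)$ and $\psi(0)=0$ do not pass to the limit from uniform bounds. Those bounds only give $\psi\in\elle\infty(0,T;\Hom)\cap\elleomT\infty$. You need some uniform-in-time convergence of $\psi_n$ (the paper argues for $\psi_n\to\psi$ in $\elle\infty(0,T;\Hom)$ via elliptic estimates on $\psi_n-\psi$), or time-continuity of $t\mapsto u(t)^\theta$ in $\elleom{1/\theta}$.

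Once the $m^{\star\star}$ bound is correctly obtained, the rest of your limit passage is sound: a.e.-in-$t$ convergence of $u_n^\theta$, dominated convergence giving $\nabla\psi_n\to\nabla\psi$ in $\elleomT2$, and convergence of the drift term.
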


For highly singular data, since $uM(x)\nabla\psi$ does not belong to $\elleomT1$, we cannot prove the existence of a distributional solution and we will need to use entropy solutions. The definition of entropy solutions was introduced in \cite{LavoroB6} for elliptic problems and in \cite{Prignet-entropy} for parabolic problems. We will consider solutions to the form $(u,\psi)$ where $u$ is an entropy solution to the first equation of \eqref{eq:KS-Sys-ParabEll} and $\psi$ is a weak solution to the second equation.

\noindent From now on, we will use the notation 
\[
T_k(s)=
\begin{cases}
    k\quad\text{if }\,s\geq k\\
    s\quad\text{if }\,\abs{s}< k\\
    -k\quad\text{if }\,s\leq- k\\
    \end{cases}\quad\text{and}\quad
    G_k(s)=s-T_k(s).
\]

\begin{dhef}\label{Def:entropysol}
    The couple $(u,\psi)$ is an \textbf{entropy solution} to \eqref{eq:KS-Sys-ParabEll} if:
    \begin{itemize}
        \item $u\in\elleomT1$;
        \item $T_k(u)\in\elle2(0,T;\Hom)$ for every $k>0$;
        \item $\psi\in C^0([0,T],\Hom)$;
        \item $\psi(0)=0$ almost--everywhere in $\Omega$;
        \item $u(0)=0$ in the sense that there exists a sequence $\{u_n\}_n$ of functions in $C^0([0, T] ; L^2(\Omega)) \cap L^2(0, T ;\Hom)$ converging to $u$ in $\elleomT1$ such that $u_n(0)=0$ for every $n\in\N$.
    \end{itemize}
    and, for every $\varphi\in\elleomT\infty\cap\elle2(0,T;\Hom)$ such that $\varphi_t\in \elleomT 1+ \elle2(0,T;H^{-1}(\Omega))$ and $\varphi(0)=0$, every $v\in\Hom$, almost every $t\in(0,T)$ and every $k\geq0$, we have
    \begin{equation}
        \begin{dcases}
                \io \Theta_k(u-\varphi)(t)+\int_0^t \langle \varphi_t,T_k(u-\varphi)\rangle+
                \iot A(x,s)\nabla u\cdot \nabla T_k(u-\varphi) \leq
                \\ \quad\iot u M(x)\nabla \psi\cdot\nabla T_k(u-\varphi) + \iot f(x,s)T_k(u-\varphi)\\
                \io M(x)\nabla \psi(t)\cdot\nabla v = \io u(t)^\theta v.
        \end{dcases}
    \end{equation}
    Here $\Theta_k(s)$ denotes the primitive of $T_k(s)$ that is zero for $s=0$.
\end{dhef}

\begin{theo}\label{Main_Theorem_4}
    Let $f\in\elleomT m$ with $1\leq m<\frac{2N+4}{N+6}$, then there exists an entropy solution $(u,\psi)$ to \eqref{eq:KS-Sys-ParabEll} such that:
    \begin{itemize}
        \item if $m>1$, $u\in\elle\infty(0,T;\elleom1)\cap\elleomT {m^{\star\star}} \cap \elle {m^\star}(0,T;\Sobom {m^\star})$;
        \item if $m=1$, $u\in\elle\infty(0,T;\elleom1)\cap\elleomT s \cap \elle q(0,T;\Sobom q)$ for every $s<1^{\star\star}$ and $q<1^\star$;
    \end{itemize}
    and $\psi\in C^0([0,T],\Hom)\cap\elleomT\infty$. Moreover, $u\geq0$ and $\psi\geq0$.
\end{theo}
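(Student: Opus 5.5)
We approximate the datum and pass to the limit. Let $f_n=T_n(f)$, so that $f_n\in\elleomT\infty$, $0\le f_n\le f$ and $f_n\to f$ in $\elleomT m$. By Theorem \ref{Main_Theorem_1} there exist bounded weak solutions $(u_n,\psi_n)$ of \eqref{eq:KS-Sys-ParabEll} with datum $f_n$, with $u_n\ge0$, $\psi_n\ge0$ and $u_n(0)=\psi_n(0)=0$. The plan has three steps:
\begin{enumerate}
\item derive a priori estimates on $u_n$ and $\psi_n$, uniform in $n$;
\item extract a.e. and strong limits;
\item pass to the limit in the entropy formulation, following Prignet's approach \cite{Prignet-entropy}.
\end{enumerate}

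\textbf{Estimates on $\psi_n$.} The key observation is that $\psi_n$ is controlled by $u_n$ only through $u_n^\theta$ with $\theta<\frac2N$. We test the first equation with $1$, or more precisely with $T_1(G_k(u_n))/1$ and then let $k\to0$; this gives $\norma{u_n(t)}{\elleom1}\le\norma{f}{\elleomT1}$. Since $\theta<1$, it follows that $u_n(t)^\theta$ is bounded in $\elleom{1/\theta}$ with $\frac1\theta>\frac N2$. Stampacchia's $L^\infty$ estimate for the elliptic equation then gives $\norma{\psi_n(t)}{\elleom\infty}\le C$ uniformly in $n$ and $t$. Testing the second equation with $\psi_n(t)$ gives a uniform bound on $\psi_n(t)$ in $\Hom$. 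This is exactly where the restriction $\theta<\frac2N$ enters.

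\textbf{Estimates on $u_n$.} To handle the drift term we use the second equation. Testing the first equation with $\varphi(u_n)$, the drift contributes
\[
\ioT u_n\varphi'(u_n)M\nabla\psi_n\cdot\nabla u_n=\ioT M\nabla\psi_n\cdot\nabla \Phi(u_n)=\ioT u_n^\theta\,\Phi(u_n),
\]
where $\Phi(s)=\int_0^s\sigma\varphi'(\sigma)\,d\sigma$; the second equality uses the elliptic equation with test function $\Phi(u_n)$. For the Boccardo--Dall'Aglio--Gallouët--Orsina test functions, namely $T_k(u_n)$ and $1-(1+u_n)^{-\lambda}$ (or $((1+u_n)^{\gamma}-1)$ when $m>1$), we have $\Phi(s)\lesssim s\varphi(s)$. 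The extra term is therefore of the form $\ioT u_n^{\theta}u_n\varphi(u_n)$, which is subcritical because $\theta<\frac2N<\frac{2}{N+2}\cdot$(gain). I would absorb it with Hölder's inequality and the Gagliardo--Nirenberg parabolic embedding against the left-hand side $\int|\nabla u_n|^2\varphi'(u_n)$, plus Young's inequality. This yields the BDGO estimates:
\begin{itemize}
\item $T_k(u_n)$ bounded in $\elle2(0,T;\Hom)$ with norm $\lesssim k$;
\item $u_n$ bounded in $\elleomT{m^{\star\star}}$ and $\nabla u_n$ bounded in $\elleomT{m^\star}$ if $m>1$;
\item the corresponding bounds for all $s<1^{\star\star}$ and $q<1^\star$ if $m=1$.
\end{itemize}
I expect this absorption to be the main obstacle. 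The delicate point is checking that the exponent bookkeeping closes uniformly for $m$ down to $1$, and that the term involving $\io u_n^{1+\theta}$ is genuinely lower order. If a direct absorption fails, I would instead bound $\ioT u_n^{1+\theta}\varphi(u_n)$ through interpolation between the $\elle\infty(0,T;\elleom1)$ bound and the gradient bound, and run the argument on time intervals of small length.

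\textbf{Compactness and passage to the limit.} From the estimates above, $(u_n)_t$ is bounded in $\elle1(0,T;W^{-1,1})+\elle2(0,T;H^{-1})$ after truncation. By the Aubin--Simon compactness results (as in \cite{BDGO-parabolic}), $u_n\to u$ a.e. and strongly in $\elleomT1$, and $T_k(u_n)\rightharpoonup T_k(u)$ weakly in $\elle2(0,T;\Hom)$. Vitali's theorem then gives $u_n^\theta\to u^\theta$ in $\elle{1/\theta}$ for a.e. $t$. Passing to the limit in the elliptic equation (via linear stability and the uniform bounds) gives $\psi_n(t)\to\psi(t)$ in $\Hom$ and a.e. $t$. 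Continuity $\psi\in C^0([0,T];\Hom)$ follows because $t\mapsto u_n(t)^\theta$ is equicontinuous in $H^{-1}$, which is inherited from $u_n\in C([0,T];\elleom1)$ bounds. The $\elleomT\infty$ bound on $\psi$ passes to the limit, and $\psi(0)=0$ holds since $u(0)=0$.

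\textbf{The entropy inequality.} We test the $n$-th problem with $T_k(u_n-\varphi)$. On the set where $\nabla T_k(u_n-\varphi)\ne0$ we have $|u_n|\le k+\norma{\varphi}{\infty}$. Hence the drift term $u_nM\nabla\psi_n\cdot\nabla T_k(u_n-\varphi)$ involves only the truncations $T_{k+\|\varphi\|_\infty}(u_n)$. It converges by combining the strong convergence of $\nabla\psi_n$ in $\elleomT2$ (from the elliptic stability) with the weak convergence of the gradients. The diffusion term is handled by Fatou's lemma and lower semicontinuity, after proving a.e. convergence of $\nabla u_n$ as in \cite{BDGO-parabolic,Prignet-entropy}. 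The term $\io\Theta_k(u_n-\varphi)(t)$ passes to the limit by a.e. convergence and Fatou's lemma. Finally, the sequence $u_n$ itself serves as the approximation realizing $u(0)=0$ in the sense of Definition \ref{Def:entropysol}, and nonnegativity of $u$ and $\psi$ is preserved in the limit.
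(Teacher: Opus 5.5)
Your route is the paper's. You use the $\elle\infty(0,T;\elleom1)$ bound, uniform Stampacchia bounds on $\psi_n$, and rewrite the drift through the second equation tested with $\Phi(u_n)$. You then get bounds on truncations and on $u_n$ in $\elleomT{m^{\star\star}}\cap\elle{m^\star}(0,T;\Sobom{m^\star})$ (or the $m=1$ analogues), a.e.\ convergence via compactness of truncations, and an entropy inequality in which the drift only sees $T_{k+\norma{\varphi}{\infty}}(u_n)$ against a strongly convergent $\nabla\psi_n$. Approximating with the solutions of Theorem \ref{Main_Theorem_1} for the datum $T_n(f)$, instead of \eqref{eq:approx_KS_system}, is a harmless variant. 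For the $\elleom1$ bound the test function should be $T_h(u_n)/h$ with $h\to0$, so that the drift contributes $O(h)$ after Young's inequality; with $T_1(\cdot)$ it does not vanish. Two steps are not secured.

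\textbf{The absorption of the drift term.} You leave the absorption of $\iint u_n^\theta\Phi(u_n)$ open, yet it is the core of the a priori estimates, and your primary mechanism does not cover the whole range of $\theta$. Suppose the $\elle1$ information enters only through the bound in $\elleomT1$, and you use H\"older in space--time with Proposition \ref{GNSparabTheo} for $v=u_n^\gamma$. Since the $\elle\infty(0,T;\elleom2)$ norm of $v$ is itself part of the energy when $\gamma>\frac12$, the exponents close only for $\theta<\frac2{N+2}$. What works is your fallback made precise, which is what the paper does: use $\sup_t\norma{u_n(t)}{\elleom1}\le\norma{f}{\elleomT1}$ at each fixed time. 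Split at a level $k$, and on $\{u_n(t)>k\}$ apply H\"older with exponents $\frac N2,\frac N{N-2}$ and the elliptic Sobolev inequality. The resulting prefactor satisfies
\[
\Bigl(\int_{\{u_n(t)>k\}}u_n^{\theta\frac N2}\Bigr)^{\frac2N}\le\Bigl(\norma{f}{\elleomT1}\,k^{\theta\frac N2-1}\Bigr)^{\frac2N},
\]
which is small for $k$ large precisely because $\theta\frac N2<1$; then integrate in time. So $\theta<\frac2N$ is used here as well, not only for the $\elleomT\infty$ bound on $\psi_n$. No short-time argument is needed either, since the smallness comes from $k$, not from $T$.

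\textbf{Continuity of $\psi$ in time.} Your argument for $\psi\in C^0([0,T];\Hom)$, on which $\psi(0)=0$ rests, does not work as stated. Each $u_n$ lying in $C([0,T];\elleom1)$ gives no modulus of continuity uniform in $n$, so equicontinuity of $t\mapsto u_n(t)^\theta$ in $H^{-1}(\Omega)$ is not inherited. You would need something like $u_n\to u$ in $C([0,T];\elleom1)$, a Porretta-type statement that requires its own proof. The paper's corresponding step does not supply this uniformity either. It applies elliptic stability at each $t$ to $u_n^\theta-u^\theta$, which converges only in $\elleomT{1/\theta}$. As written, that yields $\psi_n\to\psi$ in $\elle p(0,T;\Hom)$ for $p<\infty$, not in $\elle\infty(0,T;\Hom)$.

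\textbf{A simplification.} You do not need a.e.\ convergence of $\nabla u_n$ for the diffusion term. $\nabla T_k(u_n-\varphi)$ is bounded in $\elleomT2$ and converges weakly, and weak lower semicontinuity of the convex form $\xi\mapsto\iint A\xi\cdot\xi$ suffices, as in the paper.
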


\section{Proof of the main results}
{To prove the existence and regularity of solutions, we proceed by approximation. To this end, we consider the following approximating system:

\begin{equation}\label{eq:approx_KS_system}
    \left\{\begin{array}{cc}
    (u_n)_t-\operatorname{div}(A(x, t) \nabla u_n)=-\operatorname{div}(T_n(u_n) M(x) \nabla \psi_n)+T_n(f(x,t)) & \text { in } \Omega_T, \\
    -\operatorname{div}(M(x) \nabla \psi_n)=|T_n(u_n)|^\theta & \text { in } \Omega_T, \\
    \psi_n(x, t)=0 & \text { on } \partial\Omega\times(0, T), \\
    u_n(x, t)=0 & \text { on } \partial\Omega\times(0, T), \\
    u_n(x, 0)=0 & \text { in } \Omega.
    \end{array}\right.
\end{equation}

In \Cref{sec:existenceapprox}, we prove the existence of a weak solution $(u_n,\psi_n)$ to \eqref{eq:approx_KS_system} for every $n\in\N$. Then, depending on the summability of the datum $f$, we prove that one can pass to the limit as $n\to+\infty$ under suitable a priori bounds on $(u_n,\psi_n)_n$ to obtain a solution to \eqref{eq:KS-Sys-ParabEll}. In particular, \Cref{sec:highsummability} contains the proof of Theorem \ref{Main_Theorem_1} and Theorem \ref{Main_Theorem_2}, \Cref{sec:distrib_sol} contains the proof of Theorem \ref{Main_Theorem_3}, and \Cref{sec:entropy} contains the proof of Theorem \ref{Main_Theorem_4}.
}

\subsection{Existence of an approximate solution}\label{sec:existenceapprox}

\begin{theo}\label{theo:exists_approx_solutions}
    Let $f\in\elleomT1$. Then there exist $u_n\in \elle2(0,T;\Hom)$ such that $(u_n)_t\in \elle2(0,T;H^{-1}(\Omega))$ and $\psi_n\in C^0([0,T];\Hom)\cap\elleomT\infty$ which solve \eqref{eq:approx_KS_system} in the sense that, for all $\varphi\in\elle2(0,T;\Hom)$ with $\varphi_t\in\elle2(0,T;H^{-1}(\Omega))$ and all $v\in\Hom$,
    \begin{equation}
        \begin{dcases}
                \int_0^T {}_{H^{-1}}\langle (u_n)_t,\varphi\rangle_{\H}+\ioT A(x,t)\nabla u_n\cdot \nabla \varphi = \ioT T_n(u_n)M(x)\nabla \psi_n\cdot\nabla\varphi + \ioT T_n(f)\varphi\\
                \io M(x)\nabla \psi_n(t)\cdot\nabla v = \io \abs{T_n(u_n)}^\theta v\quad\forall\,t\in(0,T).
        \end{dcases}
    \end{equation}
    and satisfy the boundary conditions in \eqref{eq:approx_KS_system}.
\end{theo}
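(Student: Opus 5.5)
We will obtain the approximate solution from a Schauder fixed point argument on $\elleomT2$, exploiting that the truncations make every coupling term bounded for fixed $n$. Given $w\in\elleomT2$, we first solve for a.e.\ $t\in(0,T)$ the elliptic problem
\[
-\operatorname{div}(M(x)\nabla\psi)=\abs{T_n(w(t))}^\theta,\qquad \psi(t)\in\Hom .
\]
It has a unique solution by Lax--Milgram. Since the right-hand side is bounded by $n^\theta$, Stampacchia's $\elle\infty$ estimate gives $\norma{\psi(t)}{\elleom\infty}\le C n^\theta$, and we also have $\norma{\nabla\psi(t)}{\elleom2}\le C n^\theta$, both uniformly in $t$. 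Measurability in $t$ follows from the continuity of the solution map $\elleom2\ni g\mapsto\psi\in\Hom$. Hence $\psi\in\elle\infty(0,T;\Hom)\cap\elleomT\infty$ with bounds depending only on $n$.

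We then solve the linear parabolic problem
\[
u_t-\operatorname{div}(A\nabla u)=-\operatorname{div}(T_n(w)M\nabla\psi)+T_n(f),\qquad u(0)=0 .
\]
Here $T_n(w)M\nabla\psi\in(\elleomT2)^N$ with norm at most $C(n)$, and $T_n(f)\in\elleomT\infty$. By the classical Lions theory (Galerkin or J.L.~Lions' theorem) there is a unique $u\in\elle2(0,T;\Hom)$ with $u_t\in\elle2(0,T;H^{-1}(\Omega))$, hence $u\in C^0([0,T];\elleom2)$. Testing with $u$ and using Young's inequality yields
\[
\norma{u}{\elle\infty(0,T;\elleom2)}+\norma{u}{\elle2(0,T;\Hom)}+\norma{u_t}{\elle2(0,T;H^{-1})}\le R(n),
\]
with $R(n)$ independent of $w$. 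We set $S(w)=u$.

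\textbf{Key steps for Schauder.} By Aubin--Lions, $S$ maps $\elleomT2$ into a compact subset of the ball of radius $C R(n)$. So it suffices to prove that $S$ is continuous.

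Let $w_k\to w$ in $\elleomT2$.
\begin{itemize}
\item For the elliptic step, $\abs{T_n(w_k)}^\theta\to\abs{T_n(w)}^\theta$ in $\elle2(0,T;\elleom2)$ by dominated convergence (along subsequences, a.e.). Linearity of the elliptic problem then gives $\nabla\psi_k\to\nabla\psi$ in $(\elleomT2)^N$.
\item Next, $T_n(w_k)M\nabla\psi_k\to T_n(w)M\nabla\psi$ in $\elleomT2$: write the difference as $(T_n(w_k)-T_n(w))M\nabla\psi+T_n(w_k)M(\nabla\psi_k-\nabla\psi)$. The second term is at most $n\beta\norma{\nabla\psi_k-\nabla\psi}{2}$. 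The first term tends to zero by dominated convergence on a.e.\ convergent subsequences, with dominating function $2n\beta\abs{\nabla\psi}$.
\item By the energy estimate for the linear problem, $u_k\to u$ in $\elle2(0,T;\Hom)$.
\end{itemize}
The uniqueness of the limit yields convergence of the whole sequence. Schauder then gives a fixed point $u_n=S(u_n)$, and the weak formulation follows directly.

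The main obstacle I expect is the time regularity $\psi_n\in C^0([0,T];\Hom)$. Pointwise-in-$t$ solving only gives $\elle\infty(0,T;\Hom)$, and the elliptic equation has to hold for every $t$, not just almost every $t$. To handle this we use $u_n\in C^0([0,T];\elleom2)$ and the fact that $s\mapsto\abs{T_n(s)}^\theta$ is Hölder continuous with exponent $\theta$. This gives
\[
\int_\Omega\bigl|\abs{T_n(u_n(t))}^\theta-\abs{T_n(u_n(s))}^\theta\bigr|^2\le \int_\Omega\abs{u_n(t)-u_n(s)}^{2\theta}\le C\norma{u_n(t)-u_n(s)}{\elleom2}^{2\theta},
\]
by Hölder's inequality on the bounded set $\Omega$. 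Hence $t\mapsto\abs{T_n(u_n(t))}^\theta$ is continuous into $\elleom2\subset H^{-1}(\Omega)$. We then redefine $\psi_n(t)$ for every $t$ as the solution with datum $\abs{T_n(u_n(t))}^\theta$; this changes nothing almost everywhere. The Lax--Milgram estimate then gives $\psi_n\in C^0([0,T];\Hom)$, so the second equation holds for every $t$. Finally, $\psi_n\in\elleomT\infty$ by the uniform Stampacchia bound, and the boundary conditions hold because both functions take values in $\Hom$.
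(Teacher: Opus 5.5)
Your proof is correct. It uses the same architecture as the paper (freeze $w$, solve the elliptic problem, then the linear parabolic one, apply Schauder), but in a different functional setting, and that changes how compactness, continuity and time regularity are obtained.

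The paper runs Schauder in $C^0([0,T];L^2(\Omega))$. There the $\theta$-H\"older estimate $\|\eta(t)-\eta(t')\|_{H^1_0(\Omega)}\le C\|w(t)-w(t')\|_{L^2(\Omega)}^\theta$ already gives $\eta\in C^0([0,T];H^1_0(\Omega))$, with the elliptic equation holding for every $t$, at the level of the frozen problem. Continuity of $S$ is then obtained indirectly, from compactness of $S$ plus uniqueness for the linear problem.

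You work in $L^2(\Omega_T)$ instead. You get continuity of $S$ directly and quantitatively from the energy estimate for $u_k-u$. You recover $\psi_n\in C^0([0,T];H^1_0(\Omega))$ only a posteriori, for the fixed point, from $u_n\in C^0([0,T];L^2(\Omega))$ and the same H\"older bound. Your observation that this redefinition changes $\psi_n$ only on a null set of times, and so leaves the first equation untouched, is exactly what is needed.

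The real gain of your choice is the compactness step. In $L^2(\Omega_T)$ it is the $L^p(0,T;B)$ part of Aubin--Lions--Simon, fed by precisely the bounds $u\in L^2(0,T;H^1_0(\Omega))$ and $u_t\in L^2(0,T;H^{-1}(\Omega))$ that you have. By contrast, the paper's claimed compactness in $C^0([0,T];L^2(\Omega))$ does not follow from those energy bounds through Simon's Corollary 4. Its $C([0,T];B)$ part requires a bound in $L^\infty(0,T;X)$ with $X$ compactly embedded in $L^2(\Omega)$.

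A heat-equation example shows the gap. Take right-hand sides $\lambda_k\chi_{(t_0-1/\lambda_k,\,t_0)}(t)\,e_k$, with $e_k$ the Dirichlet eigenfunctions. They are bounded in $L^2(0,T;H^{-1}(\Omega))$, but the solutions at time $t_0$ equal $(1-e^{-1})e_k$, which has no strongly convergent subsequence. So in the paper's setting an extra argument, for instance one exploiting the $L^\infty(0,T;L^2(\Omega))$ bound on the convection field, would have to be supplied.

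Your setting is therefore the more robust one, at the modest cost of the a posteriori treatment of $\psi_n$.
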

\begin{remark}
    To simplify the notation, we will write $\langle (u_n)_t,\varphi\rangle$ instead of ${}_{H^{-1}}\langle (u_n)_t,\varphi\rangle_{\H}$ during computations.
\end{remark}
\begin{remark}
    By choosing test functions of the type $\rho(x)\eta(t)$ and arguing by density, it can be shown that an equivalent definition of solution to the first equation to \eqref{eq:approx_KS_system} is
    \begin{equation}
        {}_{H^{-1}}\langle (u_n)_t,\varphi\rangle_{\H}+\io A(x,t)\nabla u_n\cdot \nabla \varphi = \io T_n(u_n)M(x)\nabla \psi_n\cdot\nabla\varphi + \io T_n(f)\varphi
    \end{equation}
    for a.e. $t\in(0,T)$.
\end{remark}

\begin{remark}
    The function $u_n$ appearing in Theorem \ref{theo:exists_approx_solutions} belongs to $C^0([0,T];\elleom2)$ (see \cite[Section 5.9.2]{EvansPDE}).
\end{remark}

\begin{proof}[Proof of Theorem \ref{theo:exists_approx_solutions}]
    We start by fixing $w\in C^0([0,T];\elleom2)$ such that $w(x,0)=0$ and considering the system
    \begin{equation}\label{eq:KSParabApproxSchauder}
        \left\{\begin{array}{cc}
            v_t-\operatorname{div}(A(x, t) \nabla v)=-\operatorname{div}(T_n(w) M(x) \nabla \eta)+T_n(f(x,t)) & \text { in } \Omega_T, \\
            -\operatorname{div}(M(x) \nabla \eta)=|T_n(w)|^\theta & \text { in } \Omega_T, \\
        \end{array}\right.
    \end{equation}
    with zero boundary conditions. We claim that there exists a unique choice of $v\in\elle2(0,T;\Hom)$ such that $v_t\in\elle2(0,T;H^{-1}(\Omega))$ and $\eta\in\elle\infty(\Omega_T)\cap\elle\infty(0,T;\Hom)$ which solve \eqref{eq:KSParabApproxSchauder}.
    
    The existence of a solution to the second equation follows from Lax Milgram's theorem. Moreover (see \cite[Th\'eor\`eme 4.1]{StampacchiaElliptic}), there exists a constant $C>0$ such that
    \begin{equation}
        \norma{\eta(t)}{\Hom}\leq C n^\theta\qquad\text{and}\qquad\norma{\eta(t)}{\elleom\infty}\leq C n^\theta\qquad \forall t\in [0,T].
    \end{equation}
    Thus, $\eta\in C^0([0,T];\Hom)$.
    Moreover, by linearity and \cite[Th\'eor\`eme 4.1]{StampacchiaElliptic}, we have (up to increasing the constant $C$)
    $$\norma{\eta(t)-\eta(t')}{\Hom}
    \leq
    C\norma{w(t)-w(t')}{\elleom2}^\theta,$$
    and the right hand side term goes to zero as $\abs{t-t'}\to 0$.

    We now focus on the first equation: observe that
    \begin{equation}
        \left\{\begin{array}{cc}
           \abs{T_n(w) M(x) \nabla \eta}\leq n\beta\abs{\nabla \eta}\in\elle\infty(0,T;\elleom2)\\
           |T_n(f)|\leq n.\\
        \end{array}\right.
    \end{equation}
    This implies (see, for example, \cite{Lions}) that there exists a unique solution $v\in\elle2(0,T;\Hom)$ to the first equation such that $v(0)=0$. Such solution also satisfies the condition $v_t\in\elle2(0,T;H^{-1}(\Omega))$ and, since (up to increasing the constant $C$)
    $$\norma{T_n(w) M(x) \nabla \eta}{\elle\infty(0,T;\elleom2)}+\norma{T_n(f)}{\elle\infty(\Omega_T)}\leq Cn^{\theta+1},$$
    we have the estimate
    \begin{equation}\label{BoundSchauderParabolico}
        \norma{v}{\elle2(0,T;\Hom)}\leq Cn^{\theta+1}\quad \text{and}\quad \norma{v_t}{\elle2(0,T;H^{-1}(\Omega))}\leq Cn^{\theta+1}.
    \end{equation}
    Observe that estimate \eqref{BoundSchauderParabolico} implies (see \cite[Section 5.9.2]{EvansPDE}) that $v\in C^0([0,T];\elleom2)$ and 
    \begin{equation}\label{BoundSchauderParabolicoBis}
        \norma{v}{C^0([0,T];\elleom2)}\leq Cn^{\theta+1}.
    \end{equation}

    We now want to apply Schauder's fixed point theorem to prove the existence of a solution to \eqref{eq:approx_KS_system}. Consider the nonlinear operator
    \begin{align*}
         S: C^0([0,T];\elleom2) & \to C^0([0,T];\elleom2)\\
        w&\longmapsto v.
    \end{align*}
    By \eqref{BoundSchauderParabolicoBis}, we know that the closed ball of $C^0([0,T];\elleom2)$ of radius $R=C n^{\theta+1}$ is invariant for $S$. Moreover, by \eqref{BoundSchauderParabolicoBis} and \cite[Corollary 4]{Simon}, $S$ is a compact map.
    
    We now prove that $S$ is continuous. Consider a sequence $(w_k)_k\subset C^0([0,T];\elleom2)$ such that $w_k$ strongly converges to some $w\in C^0([0,T];\elleom2)$. Let $\eta_k$ and $\eta$ be the corresponding solutions to the second equation to \eqref{eq:KSParabApproxSchauder} (recall that both $\eta$ and the $\eta_k$'s belong to $C^0([0,T];\Hom)$). Then, by linearity, we have
    $$\norma{\eta_k(t)-\eta(t)}{\Hom}\leq C\norma{w_k(t)-w(t)}{\elleom2}^\theta\leq C\norma{w_k-w}{C^0([0,T];\elleom2)}^\theta$$
    for every $t\in(0,T)$. Taking the supremum over $t$, it follows that $\eta_k\to\eta$ in $C^0([0,T];\elleom2)$.
    
    By the compactness of $S$, we know that any subsequence of $S(w_k)$ has a subsequence which converges in $C^0([0,T];\elleom2)$ to some $\tilde v$ (which may depend on the chosen subsequence). At the same time, we have that
    \begin{equation}
        \left\{\begin{array}{cc}
          T_n(w_k)\to T_n(w)\text{ in } C^0([0,T];\elleom p) \text{ for any } p\in(1,\infty) \text{ by Lebesgue's theorem } \\
        \nabla\eta_k\to\nabla\eta\text{ in } C^0([0,T];\elleom 2).
        \end{array}\right.
    \end{equation}
    Hence, $\tilde v$ satisfies the equation
    $$
    \tilde v_t-\operatorname{div}(A(x, t) \nabla\tilde  v)=-\operatorname{div}(T_n(w) M(x) \nabla \eta)+T_n(f(x,t)).
    $$
    and, since $S(w_k)(0)=0$ for every $k$, $\tilde v(0)=0$.
    This implies, by uniqueness, that $\tilde v= S(w)$ and thus $S$ is continuous. We conclude the proof by invoking Schauder's fixed point theorem.
\end{proof}

We now prove several properties of the approximate solutions $(u_n,\psi_n)$. 
First, we prove that the positivity of the datum $f$ implies the positivity of the solutions.

\begin{lemma}
    The solutions $u_n$, $\psi_n$ to \eqref{eq:approx_KS_system} are nonnegative.
\end{lemma}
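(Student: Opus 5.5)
I will treat the two components in the order elliptic first, then parabolic.

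\textbf{Nonnegativity of $\psi_n$.} For each fixed $t$, $\psi_n(t)\in\Hom$ solves
\[
-\operatorname{div}(M(x)\nabla\psi_n(t))=\abs{T_n(u_n(t))}^\theta\geq0 .
\]
Take $v=\psi_n(t)^-=-\min(\psi_n(t),0)\in\Hom$ in the second equation. By ellipticity,
\[
-\alpha\io\abs{\nabla\psi_n(t)^-}^2\;\geq\;-\io M(x)\nabla\psi_n(t)^-\cdot\nabla\psi_n(t)^-\;=\;\io\abs{T_n(u_n(t))}^\theta\,\psi_n(t)^-\;\geq\;0 .
\]
Hence $\nabla\psi_n(t)^-=0$. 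By Poincar\'e's inequality, $\psi_n(t)^-=0$, that is, $\psi_n(t)\geq0$ for every $t$.

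\textbf{Nonnegativity of $u_n$.} Use the pointwise-in-time formulation from the earlier remark with test function $\varphi=u_n^-=-\min(u_n,0)$, which belongs to $\elle2(0,T;\Hom)$. Integrate over $(0,t)$. The standard chain rule for $u_n\in\elle2(0,T;\Hom)$ with $(u_n)_t\in\elle2(0,T;H^{-1}(\Omega))$ gives
\[
\int_0^t\langle (u_n)_t,-u_n^-\rangle=\tfrac12\io \bigl(u_n^-(t)\bigr)^2 ,
\]
using $u_n(0)=0$; I will track the sign by testing with $-u_n^-$. The diffusion term becomes
\[
\iot A\nabla u_n\cdot\nabla(-u_n^-)=\iot A\nabla u_n^-\cdot\nabla u_n^-\geq\alpha\iot\abs{\nabla u_n^-}^2 .
\]
The source term is $\iot T_n(f)(-u_n^-)\leq0$ because $f\geq0$.

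The drift term is the main obstacle. On the set $\{u_n<0\}$ we have $T_n(u_n)=-T_n(u_n^-)$, so it reads
\[
\iot T_n(u_n^-)\,M\nabla\psi_n\cdot\nabla u_n^- .
\]
I will first try to exploit the elliptic equation. Set $G(s)=\int_0^s T_n(r)\,dr$. Then $T_n(u_n^-)\nabla u_n^-=\nabla G(u_n^-)$, and $G(u_n^-)\in\Hom$ since $T_n$ is Lipschitz and bounded. Using $G(u_n^-)$ as the test function in the $\psi_n$ equation, and the symmetry of $M$ if available, the drift term becomes
\[
\iot \abs{T_n(u_n)}^\theta\, G(u_n^-)\;\geq\;0 ,
\]
which has the wrong sign to discard. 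Since $M$ is not assumed symmetric, this identity is not available in general anyway.

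I therefore plan a Gronwall argument instead. Since $\abs{T_n(u_n^-)}\leq u_n^-$, Young's inequality gives
\[
\abs{\iot T_n(u_n^-)M\nabla\psi_n\cdot\nabla u_n^-}\;\leq\;\frac\alpha2\iot\abs{\nabla u_n^-}^2+\frac{\beta^2}{2\alpha}\iot (u_n^-)^2\abs{\nabla\psi_n}^2 .
\]
The last integral is not directly bounded by $\norma{u_n^-}{\elleom2}^2$, because $\nabla\psi_n$ is only in $\elle\infty(0,T;\elleom2)$. Two ways around this are available. One can use $\abs{T_n(u_n^-)}\leq n$ together with the higher integrability of $\nabla\psi_n$ coming from Meyers' estimate. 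Alternatively, and more simply, one can test with $T_\varepsilon(u_n^-)$ instead of $u_n^-$. Then the drift term is bounded by
\[
n\beta\int_{\{0<u_n^-<\varepsilon\}}\abs{\nabla\psi_n}\abs{\nabla u_n^-} .
\]
Dividing by $\varepsilon$ yields
\[
\frac1\varepsilon\io\Theta_\varepsilon(u_n^-)(t)\;\leq\;n\beta\int_{\Omega_t\cap\{0<u_n^-<\varepsilon\}}\abs{\nabla\psi_n}\abs{\nabla u_n^-}\;\longrightarrow\;0
\]
as $\varepsilon\to0$, by dominated convergence, since $\abs{\nabla\psi_n}\abs{\nabla u_n^-}\in\elleomT1$ and the sets shrink to a null set. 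Meanwhile $\frac1\varepsilon\Theta_\varepsilon(s)\to \abs{s}$, so $\io u_n^-(t)=0$. I will take this $T_\varepsilon$ route as the main line, because it uses only $\abs{T_n(u_n)}\leq n$. It gives $u_n\geq0$ for all $t$.
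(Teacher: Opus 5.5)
Your argument for $\psi_n$ is the same as the paper's and is correct. The gap is in the last line of the $u_n$ step. You bound the drift term by $n\beta\int_{\Omega_t\cap\{0<u_n^-<\varepsilon\}}\abs{\nabla\psi_n}\abs{\nabla u_n^-}$. Dividing by $\varepsilon$ then actually gives
\[
\frac1\varepsilon\io\Theta_\varepsilon(u_n^-)(t)\leq\frac{n\beta}{\varepsilon}\int_{\Omega_t\cap\{0<u_n^-<\varepsilon\}}\abs{\nabla\psi_n}\abs{\nabla u_n^-},
\]
so the factor $\frac1\varepsilon$ has been lost on the right. With that factor, dominated convergence no longer applies. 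By the coarea formula (for a.e. time), the right-hand side is $n\beta$ times the average over $s\in(0,\varepsilon)$ of $\int_0^t\int_{\{u_n^-=s\}}\abs{\nabla\psi_n}\,d\mathcal{H}^{N-1}$. Nothing forces this to vanish as $\varepsilon\to0$ when $u_n^-$ has a nontrivial zero level set, and that is exactly the situation you are trying to exclude. So your remark that this route ``uses only $\abs{T_n(u_n)}\leq n$'' is precisely what is wrong: boundedness of the convective coefficient is not enough.

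What closes the argument is that the coefficient is small near the zero level of $u_n$. On $\{0<u_n^-<\varepsilon\}$ you have $\abs{T_n(u_n)}=T_n(u_n^-)\leq u_n^-<\varepsilon$, so the drift term is at most $\varepsilon\beta\int_{\Omega_t\cap\{0<u_n^-<\varepsilon\}}\abs{\nabla\psi_n}\abs{\nabla u_n^-}$. The $\varepsilon$ then cancels, and your dominated-convergence step is valid: the integrand lies in $\elleomT1$ by Cauchy--Schwarz, since $\nabla\psi_n\in\elle\infty(0,T;\elleom2)$ and $\nabla u_n^-\in\elleomT2$. This is the paper's proof. The paper tests with $T_h(u_n\rchi_{\{u_n\leq0\}})$, applies Young's inequality, and uses $T_n(u_n)^2\leq h^2$ on $\{-h\leq u_n\leq0\}$, so that after division by $h$ the right-hand side is $\frac{\beta^2h}{2\alpha}\int\abs{\nabla\psi_n}^2\to0$.

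The route you discarded also works, and for the wrong reasons you set it aside. The drift term equals $\io M(x)\nabla\psi_n\cdot\nabla G(u_n^-)$, and $M$ acts on $\nabla\psi_n$ both here and in the weak form of the second equation, so no symmetry of $M$ is needed. The resulting nonnegative term $\io\abs{T_n(u_n)}^\theta G(u_n^-)$ is bounded by $\frac{n^\theta}{2}\io(u_n^-)^2$, because $G(s)\leq s^2/2$. Gronwall's lemma with $u_n^-(0)=0$ then gives $u_n^-\equiv0$.
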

\begin{proof}
    We will use the same technique used in the proof of the previous lemma.
    First, we choose $\psi_n\rchi_{\{\psi_n\leq0\}}$ as a test function in the second equation to obtain
    $$
    \alpha\io\abs{\nabla\psi_n^-}^2=
    \alpha\int_{\psi_n\leq0}\abs{\nabla\psi_n}^2\leq 
    \int_{\psi_n\leq0}\abs{T_n(u_n)}^\theta\psi_n\leq0
    $$
    which, applying Poincaré's inequality, implies that $\psi_n^-=0$.

    We now prove that $u_n\geq0$: choosing $T_h(u_n\rchi_{\{u_n\leq0\}})$ as a test function, we get (using Young's inequality)
    $$
    \io (u_n)_tT_h(u_n\rchi_{\{u_n\leq0\}}) + \alpha \io \abs{\nabla T_h(u_n\rchi_{\{u_n\leq0\}})}^2
    \leq$$
    $$
    \frac{1}{2\alpha}\int_{-h\leq u_n\leq0} T_n(u_n)^2 \abs{M(x)\nabla\psi_n}^2 + \frac{\alpha}{2} \io \abs{\nabla T_h(u_n\rchi_{\{u_n\leq0\}})}^2 + \io f(x,t)T_h(u_n\rchi_{\{u_n\leq0\}}).
    $$
    After cancellations (notice that the last term is negative, since $f\geq0$), this leads us to
    $$
    \io (u_n)_tT_h(u_n\rchi_{\{u_n\leq0\}})
    \leq
    \frac{\beta^2 h^2}{2\alpha} \int_{-h\leq u_n\leq0} \abs{\nabla\psi_n}^2.
    $$
    We now divide by $h$ and take the limit $h\to0$, obtaining
    $$
    \frac{d}{dt}\io \abs{u_n\rchi_{\{u_n\leq0\}}}
    \leq
    0.
    $$
    Thus, integrating from $0$ to $t$, we have $u_n(t)\geq 0$ a.e. for every $t\in(0,T)$.
\end{proof}

We now prove that, for every $n\in\N$, the function $u_n$ is bounded. This will allow us to choose powers of $u_n$ as test functions in \eqref{eq:approx_KS_system} (without requiring a truncation). To prove the boundedness of $u_n$, we will use the following Lemma from \cite{StampacchiaElliptic}.

\begin{lemma}\label{LemmaStampacchiaLinfty}
    Let $\lambda: \mathbb{R}^{+} \rightarrow \mathbb{R}^{+}$ be a nonincreasing function such that
    $$
    \lambda(h) \leq \frac{M \lambda(k)^\delta}{(h-k)^\gamma}, \quad \forall\, h>k \geq 0,
    $$
    for $M>0, \delta>1$ and $\gamma>0$. Then $\lambda$ has a zero $d\in\R$, where
    $$
    d^\gamma=M \lambda(0)^{\delta-1} 2^{\frac{\delta \gamma}{\delta-1}} .
    $$
\end{lemma}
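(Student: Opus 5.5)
Since $\lambda$ is nonnegative and nonincreasing, it suffices to find $d$ with $\lambda(d)=0$, and I would get there by iterating the hypothesis along a geometric sequence of levels. Set $k_j = d\,(1-2^{-j})$ for $j\ge 0$, so $k_0=0$, $k_j\uparrow d$ and $k_{j+1}-k_j = d\,2^{-(j+1)}$. Applying the hypothesis with $h=k_{j+1}$, $k=k_j$ gives
\[
\lambda(k_{j+1}) \le \frac{M\,2^{(j+1)\gamma}}{d^\gamma}\,\lambda(k_j)^\delta .
\]
The key step is to prove by induction that $\lambda(k_j)\le \lambda(0)\,2^{-j\mu}$ for a suitable $\mu>0$, with $\mu=\frac{\gamma}{\delta-1}$ as the natural choice. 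The case $j=0$ is trivial. Assuming the bound at step $j$, the recursion gives
\[
\lambda(k_{j+1}) \le \frac{M\,2^{(j+1)\gamma}}{d^\gamma}\,\lambda(0)^\delta\, 2^{-j\mu\delta},
\]
and we need this to be at most $\lambda(0)2^{-(j+1)\mu}$, i.e.
\[
\frac{M\lambda(0)^{\delta-1}}{d^\gamma}\; 2^{(j+1)\gamma - j\mu\delta + (j+1)\mu}\le 1 .
\]
With $\mu=\gamma/(\delta-1)$ we have $\mu\delta=\gamma+\mu$, so the exponent equals $\gamma+\mu=\frac{\gamma\delta}{\delta-1}$, independent of $j$. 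The condition therefore reduces to $d^\gamma \ge M\lambda(0)^{\delta-1}2^{\frac{\delta\gamma}{\delta-1}}$, which holds with equality by the choice of $d$ in the statement.

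Hence $\lambda(k_j)\to 0$. By monotonicity, $0\le \lambda(d)\le\lambda(k_j)$ for all $j$, so $\lambda(d)=0$. The degenerate case $\lambda(0)=0$ (giving $d=0$) is immediate, and if $d>0$ all divisions are legitimate.

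The only delicate point is choosing the decay rate $\mu$ so that the $j$-dependence in the exponent cancels; this is exactly where $\delta>1$ is used. Once $\mu=\gamma/(\delta-1)$ is fixed, the rest is bookkeeping.
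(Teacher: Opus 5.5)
Your proof is correct: the levels $k_j=d(1-2^{-j})$, the inductive bound $\lambda(k_j)\le\lambda(0)\,2^{-j\gamma/(\delta-1)}$ (whose exponent becomes independent of $j$ because $\mu\delta=\gamma+\mu$), and the closing monotonicity step $\lambda(d)\le\lambda(k_j)\to 0$ are exactly Stampacchia's original argument. The paper does not prove this lemma itself but quotes it from \cite{StampacchiaElliptic}, so your proposal reproduces the standard proof behind that citation, and it also handles the degenerate case $\lambda(0)=0$ correctly.
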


We will also use the following parabolic version of the Gagliardo-Nirenberg inequality, which follows from the interpolation inequality for Lebesgue spaces.

\begin{prop}\label{GNSparabTheo}
    Let $v\in\elle\infty(0,T,\elleom2)\cap\elle2(0,T;\Hom)$. Then $v\in\elle{2\frac{N+2}{N}}(\Omega_T)$  and
    \begin{equation}\label{GNSparabEQ}
        \ioT \abs{v}^{2\frac{N+2}{N}}
        \leq C
        \norma{v}{\elle\infty(0,T;\elleom2)}^\frac{4}{N}\norma{\nabla v}{\elle2(\Omega_T)}^2,      
    \end{equation}
    where $C$ is a positive constant which depends on $N$ and $\Omega$.
\end{prop}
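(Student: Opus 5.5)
The plan is to work time slice by time slice: apply the Gagliardo--Nirenberg--Sobolev inequality for each fixed $t$, then use Hölder's inequality in time. Fix a.e. $t\in(0,T)$, so that $v(t)\in\Hom$. Set $r=2\frac{N+2}{N}$ and interpolate between $\elleom2$ and $\elleom{2^*}$ with $2^*=\frac{2N}{N-2}$; this is available because $N>2$. We write
\[
\frac1r=\frac{\vartheta}{2}+\frac{1-\vartheta}{2^*}.
\]
Since $\frac1r=\frac{N}{2(N+2)}$, solving gives $1-\vartheta=\frac{N}{N+2}$ and $\vartheta=\frac{2}{N+2}$. The interpolation inequality for Lebesgue spaces then yields
\[
\norma{v(t)}{\elleom r}\leq \norma{v(t)}{\elleom2}^{\vartheta}\,\norma{v(t)}{\elleom{2^*}}^{1-\vartheta}.
\]

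Raising this to the power $r$, we get $r\vartheta=\frac{4}{N}$ and $r(1-\vartheta)=2$, so
\[
\io\abs{v(t)}^r\leq \norma{v(t)}{\elleom2}^{\frac4N}\norma{v(t)}{\elleom{2^*}}^{2}.
\]
Next, the Sobolev inequality in $\Hom$ gives $\norma{v(t)}{\elleom{2^*}}\leq S_N\norma{\nabla v(t)}{\elleom2}$. Together with $\norma{v(t)}{\elleom2}\leq\norma{v}{\elle\infty(0,T;\elleom2)}$ for a.e. $t$, this becomes
\[
\io\abs{v(t)}^r\leq S_N^2\norma{v}{\elle\infty(0,T;\elleom2)}^{\frac4N}\io\abs{\nabla v(t)}^2 .
\]

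Integrating over $t\in(0,T)$ gives \eqref{GNSparabEQ} with $C=S_N^2$, and the finiteness of the right-hand side shows that $v\in\elleomT{r}$. The only point needing care is measurability of $t\mapsto\io\abs{v(t)}^r$. This follows from Fubini--Tonelli, since $v$ is measurable on $\Omega_T$. The exponent bookkeeping above is the crux, and there is no real obstacle beyond checking it.
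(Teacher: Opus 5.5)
Your proof is correct and is essentially the argument the paper intends. The paper gives no written proof: it only says the inequality follows from the interpolation inequality for Lebesgue spaces. Your slice-wise interpolation between $L^2(\Omega)$ and $L^{2^*}(\Omega)$ with $\vartheta=\frac{2}{N+2}$, followed by the Sobolev inequality and integration in time, supplies exactly those details, with the exponents checked correctly and a constant $C=S_N^2$ that in fact depends only on $N$.
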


\begin{lemma}\label{lemma:stima_un_infty:dep_n}
    For any $n\in\N$, $u_n$ belongs to $\elle\infty(\Omega_T)$.
\end{lemma}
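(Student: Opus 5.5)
We run a Stampacchia-type truncation argument on the first equation of \eqref{eq:approx_KS_system} and close it with Lemma \ref{LemmaStampacchiaLinfty}, with $n$ fixed throughout. For $n$ fixed the data are bounded: $T_n(f)\le n$, and, by \cite[Th\'eor\`eme 4.1]{StampacchiaElliptic} applied to the second equation as in the proof of Theorem \ref{theo:exists_approx_solutions}, $\nabla\psi_n\in \elle\infty(0,T;\elleom2)$ with norm at most $Cn^\theta$. So $u_n$ solves a linear parabolic equation
\[
(u_n)_t-\operatorname{div}(A\nabla u_n)=-\operatorname{div}F+g,\qquad F=T_n(u_n)M\nabla\psi_n,\quad g=T_n(f).
\]
The constant $C_n$ we obtain may blow up with $n$, which is harmless here. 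One concern is that $F$ is only in $\elle\infty(0,T;\elleom2)$, which is below the Aronson--Serrin threshold $F\in L^r$ with $r>N+2$. We will therefore not treat $F$ as a generic datum, but exploit the fact that $\nabla\psi_n$ solves an elliptic problem with bounded right-hand side.

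The cleanest route is to upgrade the integrability of $\nabla\psi_n$. Since $-\operatorname{div}(M\nabla\psi_n)=|T_n(u_n)|^\theta\le n^\theta$, Meyers' estimate gives $\nabla\psi_n(t)\in \elleom{2+\epsilon}$ uniformly in $t$. That gain is still not $L^r$ with $r>N+2$ for general discontinuous $M$, so we instead use $\psi_n$ itself as a test function.

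Concretely, for $k\ge0$ we take $G_k(u_n)\rchi_{(0,t)}$ as test function in the first equation and $G_k(u_n)\,T_n(u_n)$-type products in the second equation. The key identity comes from testing the second equation at time $s$ with $\tfrac12 G_k(u_n)^2$, or more simply with $T_n(u_n)G_k(u_n)$. Since $F\cdot\nabla G_k(u_n)=T_n(u_n)M\nabla\psi_n\cdot\nabla G_k(u_n)$, we get
\[
\int_{\Omega_t} T_n(u_n)M\nabla\psi_n\cdot\nabla G_k(u_n)\le n\int_{\Omega_t}M\nabla\psi_n\cdot\nabla G_k(u_n)=n\int_{\Omega_t}|T_n(u_n)|^\theta G_k(u_n)\le n^{1+\theta}\int_{\Omega_t}G_k(u_n).
\]
This inequality is only heuristic as written, because $T_n(u_n)$ varies, so we plan to split the domain. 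On $\{u_n\ge n\}$, $T_n(u_n)=n$ is constant and the identity is exact. On $\{k<u_n<n\}$ we bound the term by Young's inequality using $n\beta|\nabla\psi_n|$, which gives $\tfrac{\alpha}{2}\int|\nabla G_k(u_n)|^2+C n^2\int_{\{u_n>k\}}|\nabla\psi_n|^2$. This last term is controlled through higher integrability of $\nabla\psi_n$ via H\"older's inequality: $\|\nabla\psi_n\|_{\elleom{2+\epsilon}}^2|A_k(s)|^{\epsilon/(2+\epsilon)}$. Alternatively, if $n\le k$ the middle region is empty, so for $k\ge n$ only the exact identity is needed. Since $L^\infty$ bounds only require large $k$, we restrict to $k\ge n$ and avoid the obstacle entirely.

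For $k\ge n$ we then have
\[
\tfrac12\int_\Omega G_k(u_n)^2(t)+\alpha\int_{\Omega_t}|\nabla G_k(u_n)|^2\le (n^{1+\theta}+n)\int_{\Omega_T}G_k(u_n).
\]
Taking the supremum in $t$ and applying Proposition \ref{GNSparabTheo} to $G_k(u_n)$ gives
\[
\|G_k(u_n)\|^{2}_{\elle{2\frac{N+2}{N}}}\le C_n\int G_k(u_n)\le C_n\|G_k(u_n)\|_{\elle{2\frac{N+2}{N}}}\lambda(k)^{1-\frac{N}{2(N+2)}},
\]
where $\lambda(k)=|\{u_n>k\}|$. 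For $h>k$, $(h-k)\lambda(h)^{\frac{N}{2(N+2)}}\le\|G_k(u_n)\|$, so $\lambda(h)\le C_n\lambda(k)^{\delta}/(h-k)^{\gamma}$ with $\gamma=\frac{2(N+2)}{N}$ and $\delta=\frac{N+4}{N}>1$. Lemma \ref{LemmaStampacchiaLinfty}, applied to $k\mapsto\lambda(k+n)$, yields $u_n\le n+d$, and together with the nonnegativity lemma this gives $u_n\in\elleomT\infty$.

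The main technical point is the justification of the test function, in particular the time-derivative term $\int_0^t\langle(u_n)_t,G_k(u_n)\rangle=\tfrac12\int_\Omega G_k(u_n)(t)^2$. This follows by the standard Steklov/chain-rule argument, since $u_n\in \elle2(0,T;\Hom)$ with $(u_n)_t\in\elle2(0,T;H^{-1}(\Omega))$ and $u_n(0)=0$. We also need to test the second equation at each $s$ with $G_k(u_n(s))\in\Hom$, which is allowed for a.e. $s$.
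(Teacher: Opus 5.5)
Your proof is correct and is essentially the paper's argument: the paper tests the second equation with $[\Theta_n(u_n)-\Theta_n(k)]^+$, whose gradient is exactly $T_n(u_n)\nabla G_k(u_n)$ for every $k\ge 0$, and for $k\ge n$ this function coincides with $n\,G_k(u_n)$, which is your choice; the paper then uses the same energy estimate, parabolic Gagliardo--Nirenberg inequality, H\"older inequality and Lemma \ref{LemmaStampacchiaLinfty}. Restricting to $k\ge n$ and applying the lemma to $k\mapsto\lambda(k+n)$ is legitimate, so the detours through Meyers' estimate and domain splitting can simply be dropped, and your exponents $\gamma=\frac{2(N+2)}{N}$, $\delta=\frac{N+4}{N}$ are correct.
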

\begin{proof}
    Let $\Theta_n(s)$ be the primitive of $T_n(s)$ and let $k\geq0$. We choose $\left[\Theta_n(u_n)-\Theta_n(k)\right]^+$ as a test function in the second equation of \eqref{eq:approx_KS_system}. This leads us to
    $$
    \io M(x)\nabla\psi_n\nabla\left[\Theta_n(u_n)-\Theta_n(k)\right]^+=\io \abs{T_n(u_n)}^\theta\left[\Theta_n(u_n)-\Theta_n(k)\right]^+.
    $$
    Notice that 
    \begin{equation}\label{GradTestu_nlimparab1}
        \nabla\left[\Theta_n(u_n)-\Theta_n(k)\right]^+=T_n(u_n)\nabla G_k(u_n)
    \end{equation}
    and that (recall that $u_n\geq0$)
    \begin{equation}\label{StimaTestu_nlimparab1}
        \left[\Theta_n(u_n)-\Theta_n(k)\right]^+=\int_k^{u_n}T_n(\sigma)\mathrm{d}\sigma\rchi_{\{u_n\geq k\}}\leq nG_k(u_n).
    \end{equation}    
    We now choose $G_k(u_n)$ as a test function in the first equation of \eqref{eq:approx_KS_system} to obtain (integrating from $0$ to $t$)
    $$
    \iot (u_n)_t G_k(u_n)+\alpha\iot\abs{\nabla G_k(u_n)}^2
    \leq 
    \iot T_n(u_n)M(x)\nabla\psi_n\nabla G_k(u_n)+\iot T_n(f) G_k(u_n),
    $$
    that is (here we observe that $(u_n)_t G_k(u_n)=\frac{d}{dt}\frac12 G_k(u_n)^2$ and apply \eqref{GradTestu_nlimparab1}),
    $$
    \frac12\io  G_k(u_n(t))^2+\alpha\iot\abs{\nabla G_k(u_n)}^2
    \leq 
    \iot \abs{T_n(u_n)}^\theta\left[\Theta_n(u_n)-\Theta_n(k)\right]^++\iot n G_k(u_n),
    $$
    which implies, thanks to \eqref{StimaTestu_nlimparab1}
    $$
    \frac12\io  G_k(u_n(t))^2+\alpha\iot\abs{\nabla G_k(u_n)}^2
    \leq 
    \ioT n^{\theta+1}G_k(u_n)+\ioT n G_k(u_n).
    $$
    We thus have, since both terms on the left hand side are positive, there exists a constant $C>0$ such that
    \begin{equation}
        \left\{\begin{aligned}
        & \norma{G_k(u_n)}{\elle\infty(0,T;\elleom2)}^2\leq C n^{\theta+1}\int_{\Omega_T}G_k(u_n)\\
        & \norma{\nabla G_k(u_n)}{\elleomT2}^2\leq C n^{\theta+1}\int_{\Omega_T}G_k(u_n).
        \end{aligned}\right.
    \end{equation}
    By Proposition \ref{GNSparabTheo}, it follows that there exists a positive constant $C(N,\Omega,n)$ such that
    \begin{equation}\label{eq:bound_un:costante_expl}
    \int_{\Omega_T}G_k(u_n)^{2\frac{N+2}{N}}\leq C(N,\Omega,n) \left(\int_{u_n\geq k}G_k(u_n)\right)^{\frac{N+2}{N}},
    \end{equation} for some $C(N,\Omega,n)>0$.
    We now define 
    \begin{equation}\label{proof:bounded:def_Ak}
        A_k=\{(x,t)\in\Omega_T\,\colon \,u_n(x,t)\geq k\}
    \end{equation}
    and apply the H\"older inequality to the last term, obtaining (up to increasing the constant $C(N,\Omega,n)$)
    \[
    \int_{A_k}G_k(u_n)^{2\frac{N+2}{N}}\leq C(N,\Omega,n)\left(\int_{A_k}G_k(u_n)^{2\frac{N+2}{N}}\right)^{\frac12}\abs{A_k}^\frac{N+4}{2N+4}.
    \]
    Now choose $h>k$ to obtain (after cancellations)
    $$
    (h-k)^\frac{N+2}{N}\abs{A_h}^\frac{1}{2}\leq C(N,\Omega,n) \abs{A_k}^\frac{N+4}{2N+4},
    $$
    which implies
    $$
    \abs{A_h}\leq C(N,\Omega,n) \frac{\abs{A_k}^\frac{2N+8}{2N+4}}{(h-k)^{2\frac{N+2}{N}}}.
    $$
    Observing that $\frac{2N+8}{2N+4}>1$, we now invoke Lemma \ref{LemmaStampacchiaLinfty} with $$\lambda(k)=\abs{A_k}$$ to conclude that $\abs{A_{k_n}}=0$ for some $k_n\geq0$, which ends the proof. 
\end{proof}

Observe that, since the constant $C(N,\Omega,n)$ appearing in \eqref{eq:bound_un:costante_expl} diverges as $n\to\infty$, the estimate used to prove Lemma \ref{lemma:stima_un_infty:dep_n} depends on $n$. In other words, we cannot derive a uniform bound on $(u_n)_n$ in $\elleomT\infty$ from this result. 

The following two results only require a control on the $\elleomT1$ norm of the datum $f$ and, thus, are independent of $n$. First, we prove an estimate on the $\elle\infty(0,T;\elleom1)$ norm of $u_n$ in the spirit of the $\elle1$ estimate proved in \cite{BoccardoJDE,BO-Kellersegel}.

\begin{lemma}\label{StimaL1ParabEll}
Let $u_n$ be a solution to the first equation to \eqref{eq:approx_KS_system}, then $u_n\in\elle\infty(0,T;\elleom1)$ and
$$\norma{u_n}{\elle\infty(0,T;\elleom1)}\leq \norma{f}{\elle1(\Omega_T)}.$$
\end{lemma}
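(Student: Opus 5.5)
The plan is to test the first equation of \eqref{eq:approx_KS_system} with $\frac1h T_h(u_n)$ and let $h\to0$; this is the standard route to $\elle1$ estimates. Since $u_n\geq0$ by the previous lemma and $u_n\in\elle2(0,T;\Hom)$, the function $T_h(u_n)$ is an admissible test function. By Lemma \ref{lemma:stima_un_infty:dep_n}, $u_n$ is also bounded. Integrating over $(0,t)$ gives
\[
\io \frac{\Theta_h(u_n(t))}{h}+\frac1h\iot A(x,s)\nabla u_n\cdot\nabla T_h(u_n)=\frac1h\iot T_n(u_n)M(x)\nabla\psi_n\cdot\nabla T_h(u_n)+\iot T_n(f)\frac{T_h(u_n)}{h},
\]
where $\Theta_h$ is the primitive of $T_h$ vanishing at $0$. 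We have used $u_n(0)=0$ and the chain rule $\int_0^t\langle (u_n)_t,T_h(u_n)\rangle=\io\Theta_h(u_n(t))$, which holds because $u_n\in C^0([0,T];\elleom2)$.

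The diffusion term is nonnegative by ellipticity: it equals $\frac1h\int_{\{u_n<h\}}A\nabla T_h(u_n)\cdot\nabla T_h(u_n)\geq0$, so we drop it. The source term is bounded by $\norma{f}{\elleomT1}$, since $0\leq T_h(u_n)/h\leq1$ and $0\leq T_n(f)\leq f$.

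The main point is the drift term. On the set where $\nabla T_h(u_n)\neq0$ we have $0\leq u_n<h$, so $T_n(u_n)=u_n\leq h$ for $h\leq n$. Using Young's inequality as in the positivity lemma,
\[
\frac1h\iot u_n M\nabla\psi_n\cdot\nabla T_h(u_n)\leq \frac1h\cdot\frac{\beta^2}{2\alpha}\int_{\Omega_t\cap\{u_n<h\}}u_n^2\abs{\nabla\psi_n}^2+\frac{\alpha}{2h}\iot\abs{\nabla T_h(u_n)}^2.
\]
The second piece is absorbed by the diffusion term. The first is at most $\frac{\beta^2 h}{2\alpha}\ioT\abs{\nabla\psi_n}^2$. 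This quantity tends to $0$ as $h\to0$, because $\psi_n\in C^0([0,T];\Hom)$ and hence $\nabla\psi_n\in\elleomT2$.

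An alternative, which the paper's structure suggests, uses the second equation. Rewrite $\nabla T_h(u_n)\,T_n(u_n)$ as $\nabla\left[\Theta_n(T_h(u_n))\right]$ (for $h\leq n$) and test the second equation with it. This turns the drift term into $\frac1h\iot u_n^\theta\,\Theta_n(T_h(u_n))\leq \frac{h}{2}\iot u_n^\theta$, which also vanishes as $h\to0$, and this time with no absorption needed. Either way, the drift contributes nothing in the limit.

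Finally, $\Theta_h(s)/h\to \abs{s}=s$ monotonically as $h\to0$ for $s\geq0$. By monotone convergence we get $\io u_n(t)\leq\norma{f}{\elleomT1}$ for every $t\in[0,T]$, and taking the supremum over $t$ gives the claim. The only delicate step is controlling the drift term uniformly, which is handled by either of the two arguments above. Everything else is routine.
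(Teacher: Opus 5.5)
Your proposal is correct and is essentially the paper's proof. You test with $T_h(u_n)$, control the drift term by Young's inequality using $T_n(u_n)\le h$ where $\nabla T_h(u_n)\neq 0$, divide by $h$ and let $h\to 0$. The only difference is that you integrate over $(0,t)$ before passing to the limit, which is slightly cleaner than the paper's differential inequality $\frac{d}{dt}\int_\Omega u_n(t)\le\int_\Omega |f(x,t)|$.

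One small fix: you cannot first drop the diffusion term and then use it for absorption. Keep $\frac{\alpha}{h}\int_0^t\int_\Omega|\nabla T_h(u_n)|^2$ on the left, absorb the $\frac{\alpha}{2h}$ piece, and only then discard the remainder. Your alternative via the second equation, as in the paper's boundedness lemma, is also valid and needs no absorption.
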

\begin{proof}
    Choose $T_h(u_n)$ as a test function at a fixed time $t$ to obtain
    $$
    \langle (u_n)_t,T_h(u_n)\rangle+\alpha\io\abs{\nabla T_h(u_n)}^2\leq 
    \io T_n(u_n) M(x)\nabla\psi_n\cdot\nabla T_h(u_n) +\io f(x,t)T_h(u_n).
    $$
    Applying Young's inequality to the right hand side (and recalling that $\abs{T_h(u_n)}\leq h$) we obtain
    $$
    \langle (u_n)_t,T_h(u_n)\rangle+\alpha\io\abs{\nabla T_h(u_n)}^2\leq 
    \frac{\beta^2 h^2}{2\alpha} \io\abs{\nabla\psi_n}^2+\frac{\alpha}{2}\io\abs{\nabla T_h(u_n)}^2 +\io h\abs{f(x,t)}
    $$
    After simplifying, we divide by $h$ and pass to the limit as $h\to0$, obtaining
    $$
    \frac{d}{dt}\io u_n(t)\leq \io \abs{f(x,t)}.
    $$
    Integrating from $0$ to $t$ leads to
    $$\norma{u_n(t)}{\elle1}\leq \int_0^t\io \abs{f(x,t)}\leq \ioT\abs{f}= \norma{f}{\elle1(\Omega_T)}$$
    which, taking the supremum over all $t\in(0,T)$, concludes the proof.
\end{proof}

We now focus on the solution $\psi_n$. In the next Lemma, we show that, for any datum $f\in\elleomT1$, uniform bounds on the sequence $(\psi_n)_n$ can be obtained.
    
\begin{lemma}\label{lemma:psin_bounded_unif}
    The sequence $(\psi_n)_n$ is bounded in $\elle\infty(\Omega_T)$ and $(\nabla\psi_n)_n$ is bounded in $\elle\infty(0,T;\elleom2)$. Moreover, there exists a constant $C>0$ such that the following estimate holds:
    \begin{equation}\label{eq:lemma:psin_bounded_unif}
        \norma{\psi_n}{\elle\infty(\Omega_T)}+\norma{\nabla \psi_n}{\elle\infty(0,T;\elleom2)}\leq C\norma{f}{\elle1(\Omega_T)}^\theta.
    \end{equation}
\end{lemma}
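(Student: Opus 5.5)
For each fixed $t$, we regard the second equation of \eqref{eq:approx_KS_system} as a linear elliptic problem with datum $g_n(t)=\abs{T_n(u_n(t))}^\theta$. We then apply Stampacchia's $\elle\infty$ and energy estimates \cite[Th\'eor\`eme 4.1]{StampacchiaElliptic}, feeding them with the uniform bound from Lemma \ref{StimaL1ParabEll}. The key observation is that $\theta<\frac2N$ lets us choose an exponent $r>\frac N2$ with $r\theta\leq 1$. The simplest choice is $r=\frac1\theta$, which satisfies $\frac1\theta>\frac N2$ precisely because $\theta<\frac2N$.

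First, for a.e. $t\in(0,T)$ we use $u_n\geq0$ and $\abs{T_n(u_n)}\leq u_n$ to compute
$$
\norma{g_n(t)}{\elleom{1/\theta}}=\Big(\io T_n(u_n(t))\Big)^\theta\leq\Big(\io u_n(t)\Big)^\theta\leq \norma{f}{\elleomT1}^\theta .
$$
The last inequality is Lemma \ref{StimaL1ParabEll}. Since $u_n\in C^0([0,T];\elleom2)$, the map $t\mapsto\norma{u_n(t)}{\elleom1}$ is continuous, so the bound holds for every $t$ and not just almost every $t$. Because $\frac1\theta>\frac N2$, Stampacchia's theorem then yields
$$
\norma{\psi_n(t)}{\elleom\infty}\leq C\norma{g_n(t)}{\elleom{1/\theta}}\leq C\norma{f}{\elleomT1}^\theta,
$$
with $C$ depending only on $N,\Omega,\alpha,\theta$ and not on $n$ or $t$. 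Taking the supremum over $t$ gives the $\elle\infty(\Omega_T)$ bound.

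For the gradient we test the second equation with $\psi_n(t)$ itself. Ellipticity gives
$$
\alpha\io\abs{\nabla\psi_n(t)}^2\leq\io g_n(t)\psi_n(t)\leq\norma{\psi_n(t)}{\elleom\infty}\io T_n(u_n(t))^\theta .
$$
By H\"older's inequality (using $\theta<1$), $\io u_n^\theta\leq\abs{\Omega}^{1-\theta}\big(\io u_n\big)^\theta\leq C\norma{f}{\elleomT1}^\theta$. Combined with the $\elle\infty$ bound, this gives $\norma{\nabla\psi_n(t)}{\elleom2}^2\leq C\norma{f}{\elleomT1}^{2\theta}$, and taking square roots and the supremum over $t$ yields \eqref{eq:lemma:psin_bounded_unif}.

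The only delicate point is making sure the Stampacchia estimate is applied with an exponent whose norm is controlled uniformly by the $\elle1$ bound. This is exactly where the restriction $\theta<\frac2N$ enters. Once that is in place, everything else is routine.
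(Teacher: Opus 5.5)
Your proposal is correct and follows the paper's proof essentially step by step. You bound $\norma{\abs{T_n(u_n(t))}^\theta}{\elleom{1/\theta}}$ uniformly via Lemma \ref{StimaL1ParabEll}, apply \cite[Th\'eor\`eme 4.1]{StampacchiaElliptic} with exponent $\frac1\theta>\frac N2$ to get the $\elle\infty$ bound, and then test the second equation with $\psi_n(t)$ to obtain the gradient bound, exactly as the paper does (your explicit H\"older factor $\abs{\Omega}^{1-\theta}$ is simply the constant the paper absorbs silently).
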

\begin{proof}
    First, observe that $T_n(u_n)^\theta\leq u_n^\theta$ and that, by the H\"older inequality and Lemma \ref{StimaL1ParabEll}, there exists a constant $C>0$ such that
    $$\io u_n(t)^\theta\leq C\left(\io u_n(t)\right)^\theta\leq C\norma{f}{\elle1(\Omega_T)}^\theta \quad\forall\,t\in(0,T).$$
    Recalling that $\frac1\theta>\frac{N}{2}$, we apply \cite[Th\'eor\`eme 4.1]{StampacchiaElliptic} to obtain
    \begin{equation}\label{eq:proof:lemmapsinbounded_1}
        \norma{\psi_n(t)}{\elleom\infty}\leq C\norma{u_n^\theta}{\elleom{\frac{1}{\theta}}}\leq C\norma{f}{\elle1(\Omega_T)}^\theta \quad\forall\,t\in(0,T).
    \end{equation}
    To estimate $\nabla\psi_n$, we choose $\psi_n$ as a test function in the second equation of \eqref{eq:approx_KS_system}, obtaining
    $$
    \alpha\io\abs{\nabla\psi_n(t)}^2
    \leq\io T_n(u_n)^\theta\psi_n
    \leq
    \norma{u_n}{\elleom1}^\theta\norma{\psi_n(t)}{\elleom\infty}\leq \bigr(C\norma{f}{\elle1(\Omega_T)}^\theta)^2\quad\forall\,t\in(0,T),
    $$
    from which we have
    \begin{equation}\label{eq:proof:lemmapsinbounded_2}
    \norma{\nabla\psi_n(t)}{\elleom2}\leq \frac{1}{\alpha^{\frac12}}C\norma{f}{\elle1(\Omega_T)}^\theta\quad\forall\,t\in(0,T).        
    \end{equation}
    Taking the supremum over $t$ in \eqref{eq:proof:lemmapsinbounded_1} and \eqref{eq:proof:lemmapsinbounded_2} (up to increasing the constant $C$) concludes the proof.
\end{proof}

\subsection{Existence and regularity of a solution with finite energy}\label{sec:highsummability}

In this section, we focus on the finite energy case: that is, when $f\in\elleomT m$ for some $m\geq \frac{2N+4}{N+4}$. Depending on the summability of $f$, we prove two a priori estimates. 
Lemma \ref{lemma:stima_m**_unif} shows that, if $f$ belongs to $\elle m (\Omega_T)$ for some $\frac{2N+4}{N+4}\leq m<\frac{N+2}{2}$, the sequence  $(u_n)_n$ is bounded in $\elleomT {m^{\star\star}}$ and in $\elle2(0,T;\Hom)$. If the summability of $f$ is higher (that is, if $f\in\elleomT m$ for $m>\frac{N+2}{2}$), Lemma \ref{lemma:stima_Linfty_unif} shows that the sequence $(u_n)_n$ is bounded in $\elleomT \infty$.

\begin{remark}
    Since $\Omega_T$ has finite measure, $f\in\elleomT p$ for some $p>1$ implies $f\in \elleomT q$ for every $q\in [1,p]$. Thus, the assumptions of Lemma \ref{lemma:stima_m**_unif} are satisfied throughout the whole section.
\end{remark}

\begin{lemma}\label{lemma:stima_m**_unif}
    Let $f\in\elle m (\Omega_T)$ with $\frac{2N+4}{N+4}\leq m<\frac{N+2}{2}$, then the sequence of approximating solutions $(u_n)_n$ is bounded in $\elleomT {m^{\star\star}}$ and in $\elle2(0,T;\Hom)$.
\end{lemma}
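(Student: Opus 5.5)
Our plan is to adapt the Boccardo--Dall'Aglio--Gallou\"et--Orsina power-test-function argument to the system, using the elliptic equation to absorb the drift term exactly as in the proof of Lemma \ref{lemma:stima_un_infty:dep_n}. Since $u_n\in\elleomT\infty$ by Lemma \ref{lemma:stima_un_infty:dep_n} and $u_n\geq0$, powers of $u_n$ are admissible test functions. Set $\gamma=\frac{m^{\star\star}}{2}\cdot\frac{N}{N+2}\cdot 2-1$; more precisely, we choose $\gamma\geq1$ such that $(\gamma+1)\frac{N+2}{N}=m^{\star\star}$ and such that the H\"older exponent matching gives $\gamma m'=m^{\star\star}$. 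One checks that $m\geq\frac{2N+4}{N+4}$ is exactly the condition $\gamma\geq1$. Equivalently, we work with $w_n=u_n^{\frac{\gamma+1}{2}}$ and test the first equation with $u_n^\gamma$ (or with $(u_n+\varepsilon)^\gamma-\varepsilon^\gamma$ if needed for $\gamma<1$ issues, though here $\gamma\geq1$).

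The first step is to test the first equation with $u_n^\gamma$ on $(0,t)$. This yields
\[
\frac{1}{\gamma+1}\io u_n(t)^{\gamma+1}+\alpha\gamma\iot u_n^{\gamma-1}\abs{\nabla u_n}^2\leq \gamma\iot T_n(u_n)u_n^{\gamma-1}M\nabla\psi_n\cdot\nabla u_n+\iot f u_n^\gamma .
\]
The key step, and the main obstacle, is the drift term. We write $\gamma T_n(u_n)u_n^{\gamma-1}\nabla u_n=\nabla\Phi_n(u_n)$, where $\Phi_n(s)=\gamma\int_0^s T_n(\sigma)\sigma^{\gamma-1}d\sigma$ satisfies $0\leq\Phi_n(s)\leq\frac{\gamma}{\gamma+1}s^{\gamma+1}$ (after checking that $\Phi_n(u_n)\in\Hom$ since $u_n$ is bounded). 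Using $\Phi_n(u_n)$ as test function in the second equation turns the drift term into $\iot T_n(u_n)^\theta\Phi_n(u_n)\leq C\iot u_n^{\gamma+1+\theta}$. This has no sign, so it must be controlled. We plan to interpolate: by H\"older with the $\elle\infty(0,T;\elleom1)$ bound of Lemma \ref{StimaL1ParabEll}, the term is bounded by something like $\int_0^t\norma{u_n}{\elleom1}^{a}\norma{u_n}{\elleom{r}}^{b}$ with exponents such that, via Sobolev on $w_n$ in space, it is controlled by $C\norma{f}{1}^{a}\int_0^t\norma{\nabla w_n}{2}^{2\kappa}$ with $\kappa<1$. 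This is where $\theta<\frac2N$ is used: the extra power $u^\theta$ is subcritical relative to the gain $\frac2N$ coming from the $L^1$ control. Then Young's inequality absorbs it into the left-hand side, with a constant independent of $n$.

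Once this is done, we obtain
\[
\sup_t\io u_n(t)^{\gamma+1}+\ioT\abs{\nabla w_n}^2\leq C+C\ioT f u_n^\gamma\leq C+C\norma{f}{m}\Bigl(\ioT u_n^{\gamma m'}\Bigr)^{1/m'} .
\]
Proposition \ref{GNSparabTheo} applied to $w_n$ gives $\ioT u_n^{(\gamma+1)\frac{N+2}{N}}\leq C\bigl(\sup_t\io u_n^{\gamma+1}\bigr)^{2/N}\ioT\abs{\nabla w_n}^2$. Since $\gamma m'=(\gamma+1)\frac{N+2}{N}=m^{\star\star}$, setting $Y=\ioT u_n^{m^{\star\star}}$ yields $Y\leq C(1+Y^{1/m'})^{\frac{N+2}{N}}$. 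Because $\frac{N+2}{Nm'}<1$ when $m<\frac{N+2}{2}$, this bounds $Y$ uniformly, and hence also bounds $\sup_t\io u_n^{\gamma+1}$ and $\ioT\abs{\nabla w_n}^2$. Finally, since $\gamma\geq1$, we have $\abs{\nabla u_n}^2\leq u_n^{\gamma-1}\abs{\nabla u_n}^2+\abs{\nabla u_n}^2\rchi_{\{u_n\leq1\}}$. The second piece is controlled by testing with $T_1(u_n)$ as in Lemma \ref{StimaL1ParabEll}, using Lemma \ref{lemma:psin_bounded_unif}. This gives the bound in $\elle2(0,T;\Hom)$.
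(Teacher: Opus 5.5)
Your proposal is correct and is essentially the paper's proof. Your exponent $\gamma$ plays the role of the paper's $2\gamma-1$, so you use the same test function $u_n^{2\gamma-1}$, the same primitive of $\sigma^{2\gamma-2}T_n(\sigma)$ in the elliptic equation to convert the drift term, and the same closing via Proposition \ref{GNSparabTheo} with the exponent matching $(2\gamma-1)m'=2\gamma\frac{N+2}{N}=m^{\star\star}$. There are two minor variations. First, the paper controls $\io u_n^{\theta+2\gamma}$ by splitting at a level $k$ and applying H\"older with exponent $\frac N2$, Lemma \ref{StimaL1ParabEll} and Sobolev, where you interpolate between $\elle1$ and the Sobolev exponent and use Young with $\kappa<1$; both rely on $\theta<\frac2N$ in the same way. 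Second, the paper gets the $\elle2(0,T;\Hom)$ bound by rerunning the argument with $m=\frac{2N+4}{N+4}$ and $\gamma=1$, while your splitting of $\abs{\nabla u_n}^2$ combined with the $T_1(u_n)$ estimate is an equally valid alternative.
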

\begin{proof}
    Let $\gamma\geq1$. We define the function
    $$F_n(s)=\int_0^s \sigma^{2\gamma-2}T_n(\sigma) \mathrm{d}\sigma$$
    and choose $F_n(u_n)$ as a test function in the second equation of \eqref{eq:approx_KS_system} to get
    $$
    \io u_n^{2\gamma-2}T_n(u_n) M(x)\nabla \psi_n\cdot\nabla u_n 
    =
    \io M(x)\nabla \psi_n\cdot\nabla F_n(u_n)
    =
    \io T_n(u_n)^\theta F_n(u_n)
    $$
    $$
    \leq\frac{1}{2\gamma}\io T_n(u_n)^\theta u_n^{2\gamma}
    $$
    Now choose $u_n^{2\gamma-1}$ (which belongs to $\elleomT\infty$ by Lemma \ref{lemma:stima_un_infty:dep_n}) as a test function in the first equation of \eqref{eq:approx_KS_system} to get
    \begin{equation*}
        \begin{split}
            \frac{1}{{2\gamma}}\frac{d}{dt}\io u_n^{2\gamma}+&\alpha(2\gamma-1)\io\abs{\nabla u_n}^2u_n^{2\gamma-2}\\&
    \leq
    (2\gamma-1)\io u_n^{2\gamma-2}T_n(u_n) M(x)\nabla \psi_n\cdot\nabla u_n+\io f(x,t)u_n^{2\gamma-1}.
        \end{split}
    \end{equation*}
    Thus
    \begin{equation}\label{basta-gamma-geq-frac12}
        \frac{1}{{2\gamma}}\frac{d}{dt}\io u_n^{2\gamma}
        +\frac{\alpha(2\gamma-1)}{\gamma^2}\io\abs{\nabla u_n^\gamma}^2
        \leq
        \frac{2\gamma-1}{2\gamma}\io u_n^\theta u_n^{2\gamma}+\io f(x,t)u_n^{2\gamma-1}.
    \end{equation}
    Now we split the integration domain in the third term and apply the H\"older inequality to get
\begin{equation}
        \begin{split}
        \io u_n^{\theta+2\gamma}=&\int_{u_n\leq k} u_n^{\theta+2\gamma}+\int_{u_n\geq k} u_n^{\theta+2\gamma}\\& \leq 
        k^{\theta+2\gamma}\abs{\Omega}+
        \left(\int_{u_n\geq k}u_n^{\theta\frac{N}{2}}\right)^{\frac{2}{N}}\left(\int_{u_n\geq k}u_n^{\gamma {\frac{2N}{N-2}}}\right)^\frac{N-2}{N}
        \\&
        \leq
        k^{\theta+2\gamma}\abs{\Omega}+
        \mathcal{S}\left(\frac{\norma{f}{\elle1(\Omega_T)}}{k^{1-\theta\frac{N}{2}}}\right)^{\frac{2}{N}}\io\abs{\nabla u_n^\gamma}^2        
    \end{split}
\end{equation}
where the last estimate comes from the Sobolev inequality.
    Thus, choosing $k$ large enough, we have
    $$\frac{1}{{2\gamma}}\frac{d}{dt}\io u_n^{2\gamma}+
    \frac{\alpha(2\gamma-1)}{2\gamma^2}\io\abs{\nabla u_n^\gamma}^2
    \leq
     (2\gamma-1)k^{\theta+2\gamma}\abs{\Omega}+\io f(x,t)u_n^{2\gamma-1}.$$
    Integration from $0$ to $t$ leads to
    $$\frac{1}{{2\gamma}}\io u_n^{2\gamma}(t)+
    \frac{\alpha(2\gamma-1)}{2\gamma^2}\iot\abs{\nabla u_n^\gamma}^2
    \leq
     (2\gamma-1)k^{\theta+2\gamma}\abs{\Omega}T+\ioT f(x,t)u_n^{2\gamma-1}.$$
    Since both terms on the left hand side are positive, it follows that there exists a constant $C>0$ (which we will tacitly increase if needed) such that
    \begin{equation}\label{StimaGNSParabEllStampacchia2}
        \left\{\begin{aligned}
        & \norma{u_n^\gamma}{\elle\infty(0,T;\elleom2)}^2\leq C\left(k^{\theta+2\gamma}\abs{\Omega}T+\ioT f(x,t)u_n^{2\gamma-1}\right).\\
        & \norma{\nabla u_n^\gamma}{\elleomT2}^2\leq C\left(k^{\theta+2\gamma}\abs{\Omega}T+\ioT f(x,t)u_n^{2\gamma-1}\right).
        \end{aligned}\right.
    \end{equation}
    Hence, applying Proposition \ref{GNSparabTheo} to $v=u_n^\gamma$, we obtain
    $$
    \ioT u_n^{2\gamma\frac{N+2}{N}}
    %\leq\norma{u_n^\gamma}{\elle\infty(0,T;\elleom2)}^\frac{4}{N}\norma{\nabla u_n^\gamma}{\elleomT2}^2
    \leq C\left(k^{\theta+2\gamma}\abs{\Omega}T+\ioT f(x,t)u_n^{2\gamma-1}\right)^\frac{N+2}{N}.
    $$
    We now apply the H\"older inequality to the last term, obtaining
    \begin{equation}\label{HolderRHSParabellstampacchia}
        \ioT f(x,t)u_n^{2\gamma-1}
        \leq
        \norma{f}{\elleomT m}\left(\ioT u_n^{(2\gamma-1)m'}\right)^\frac{1}{m'}.
    \end{equation}
    Setting $2\gamma\frac{N+2}{N}=(2\gamma-1)m'$ leads to
    $$\gamma=\frac{Nm}{2(N+2-2m)}\quad\text{and}\quad 2\gamma\frac{N+2}{N}= \frac{(N+2)m}{N+2-2m}=m^{\star\star},$$
    hence
    \begin{equation}\label{proof:stima_m**_1:slower}        
    \left(\ioT u_n^{\frac{m(N+2)}{N+2-2m}}\right)^\frac{N}{N+2}
    \leq
    C\left(k^{\theta+2\gamma}\abs{\Omega}T+
    \norma{f}{\elleomT m}\left(\ioT u_n^\frac{m(N+2)}{N+2-2m}\right)^\frac{1}{m'}\right).
    \end{equation}
    Moreover, we have
    $$\frac{1}{m'}=1-\frac{1}{m}<1-\frac{2}{N+2}=\frac{N}{N+2}.$$
    This implies that the right-hand side of \eqref{proof:stima_m**_1:slower}, seen as a function of $\ioT u_n^\frac{m(N+2)}{N+2-2m}$, grows at a slower rate than the term on the left-hand side. 
    It follows that the sequence $(u_n)_n$ is bounded in $\elleomT {m^{\star\star}}$.

    Observe that, since $f\in\elleomT{\frac{2N+4}{N+4}}$, we can repeat the same calculations with
    $m=\frac{2N+4}{N+4}$ and $\gamma=1$, which implies, thanks to \eqref{StimaGNSParabEllStampacchia2}, that the sequence $(u_n)_n$ is bounded in $\elle2(0,T;\Hom)$.
\end{proof}

Note that, although
\begin{equation}\label{gamma1-parabell}
    \gamma\geq1\qquad\Longleftrightarrow\qquad m\geq\frac{2N+4}{N+4},
\end{equation}
the requirement $\gamma\geq1$ in the previous proof could be relaxed to $\gamma>\frac{1}{2}$ after \eqref{basta-gamma-geq-frac12}. Indeed, a similar approach (with a slight modification of the test functions) will be used to prove Lemma \ref{lemma:stima_m**_unif:infinite_energy}, which deals with the case $1<m\leq\frac{2N+4}{N+4}$. 

\begin{remark}
    Thanks to \eqref{StimaGNSParabEllStampacchia2} and \eqref{HolderRHSParabellstampacchia}, the sequence $(u_n)_n$ is bounded in $\elle\infty(0,T;\elleom {2\gamma})$, which is $\elle\infty(0,T;\elleom{\frac{Nm}{N+2-2m}})$. 
\end{remark}

\begin{lemma}\label{lemma:stima_Linfty_unif}
    Let $m>\frac{N+2}{2}$ and $f\in\elle m (\Omega_T)$. Then, the sequence of approximating solutions $(u_n)_n$ is bounded in $\elleomT {\infty}$.
\end{lemma}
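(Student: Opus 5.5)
We argue by Stampacchia truncation, uniform in $n$, combining the two equations exactly as in the proof of Lemma \ref{lemma:stima_un_infty:dep_n}. The only difference is that the drift term is now controlled through the uniform bounds on $u_n$ rather than through $n$.

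\emph{Step 1: the energy inequality on level sets.} Fix $k\geq0$ and test the first equation of \eqref{eq:approx_KS_system} with $G_k(u_n)$. To handle the drift we need an identity for $\io T_n(u_n)M\nabla\psi_n\cdot\nabla G_k(u_n)$. Testing the second equation with $[\Theta_n(u_n)-\Theta_n(k)]^+$ gives it exactly as in \eqref{GradTestu_nlimparab1}. Instead of the crude bound $\leq nG_k(u_n)$, we use $[\Theta_n(u_n)-\Theta_n(k)]^+\leq u_nG_k(u_n)$, so the drift contributes at most
\[
\iot u_n^{1+\theta}G_k(u_n)\leq \iot u_n^\theta G_k(u_n)^2+k\iot u_n^\theta G_k(u_n).
\]

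\emph{Step 2: absorbing the superlinear term.} The absorption mirrors Lemma \ref{lemma:stima_m**_unif}. On $A_k$ we apply H\"older with exponents $N/2$ and $N/(N-2)$:
\[
\io u_n^\theta G_k(u_n)^2\leq \Big(\int_{A_k}u_n^{\theta N/2}\Big)^{2/N}\mathcal S\io|\nabla G_k(u_n)|^2.
\]
Since $\theta N/2<1$ and $\norma{u_n(t)}{\elleom1}\leq\norma{f}{\elle1}$ by Lemma \ref{StimaL1ParabEll}, the first factor is at most $C(\norma{f}{1}/k^{1-\theta N/2})^{2/N}$, which is small once $k\geq k_0$ with $k_0$ independent of $n$. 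So for $k\geq k_0$ this term is absorbed into $\frac\alpha2\io|\nabla G_k|^2$. The remaining right-hand side is
\[
\iot (ku_n^\theta+f)G_k(u_n).
\]
Here $u_n^\theta$ is bounded in $\elleomT r$ for every $r<m^{\star\star}/\theta$ by Lemma \ref{lemma:stima_m**_unif}, where we use any exponent below $\frac{N+2}2$ for $f$. Since $\theta<2/N$, we can choose $r>\frac{N+2}2$. Thus $g:=k_0u_n^\theta+f$ (up to using $k$ in place of $k_0$, which is handled by treating $k\in[k_0,2k_0]$ first, or simply splitting $kG_k\le G_k^2/\epsilon+\ldots$) is bounded in $\elleomT{q}$ with $q=\min(m,r)>\frac{N+2}2$, uniformly in $n$.

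The \emph{main obstacle} is the $k$-dependence of the $ku_n^\theta G_k$ term. The cleanest fix is to note that $u_n^{1+\theta}G_k\leq u_n^\theta G_k^2+ku_n^{\theta}G_k$. We then keep $k$ only as large as needed, by first establishing the bound for levels $k\geq k_0$ with a fixed reference $k_0$, and writing $ku_n^\theta\le 2k_0u_n^\theta$ is not available. Instead we bound $ku_n^\theta G_k\le u_n^{1+\theta}\rchi_{A_k}G_k$, use $u_n^{1+\theta}$ bounded in some $L^{q}$ with $q>\frac{N+2}2$, and note that Stampacchia's method tolerates a datum depending on the level $k$ through the factor $\rchi_{A_k}$. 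If that fails, I would absorb it by working with $\log$-type levels or by iterating Lemma \ref{lemma:stima_m**_unif} to get $u_n$ bounded in every $\elleomT s$, $s<\infty$. The latter follows from bootstrapping $m^{\star\star}$, since the effective datum $u_n^\theta$ improves each step.

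\emph{Step 3: the Stampacchia iteration.} With $g$ bounded in $\elleomT q$, $q>\frac{N+2}2$, we obtain bounds on $\norma{G_k(u_n)}{\elle\infty(0,T;\elleom2)}^2$ and $\norma{\nabla G_k(u_n)}{\elleomT2}^2$ by $\ioT gG_k(u_n)$. Proposition \ref{GNSparabTheo} then gives
\[
\int_{A_k}G_k^{2\frac{N+2}N}\le C\Big(\norma{g}{q}\norma{G_k}{2\frac{N+2}N}|A_k|^{1-\frac1q-\frac N{2N+4}}\Big)^{\frac{N+2}N}.
\]
This yields $|A_h|\le C|A_k|^\delta/(h-k)^{2\frac{N+2}N}$ with $\delta=\frac{N+2}{N}\cdot\frac{2N+4}{N+4}\big(1-\frac1q-\frac N{2N+4}\big)$, and $\delta>1$ exactly when $q>\frac{N+2}2$. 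Lemma \ref{LemmaStampacchiaLinfty}, applied from the level $k_0$, gives $\norma{u_n}{\infty}\le k_0+d$ with $d$ independent of $n$.
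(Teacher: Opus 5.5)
Your proof is correct and is essentially the paper's argument. The paper also uses $[\Theta_n(u_n)-\Theta_n(k)]^+\le u_nG_k(u_n)$, but then takes $\tilde f_n=u_n^{1+\theta}+f$ directly as the datum; this is bounded in $\elleomT m$ because Lemma \ref{lemma:stima_m**_unif}, applied once with exponents $s\uparrow\frac{N+2}{2}$, already bounds $u_n$ in every $\elleomT p$, $p<\infty$. So your splitting, the absorption above $k_0$, and the hedged alternatives (no bootstrap is needed) can all be dropped.

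One arithmetic fix in Step 3: the exponent is $\delta=\frac{2(N+2)}{N}\bigl(1-\frac1q-\frac{N}{2N+4}\bigr)=\frac{N+4}{N}-\frac{2(N+2)}{Nq}$, which is the one satisfying $\delta>1\iff q>\frac{N+2}{2}$. The prefactor $\frac{N+2}{N}\cdot\frac{2N+4}{N+4}$ you wrote would instead give the threshold $q>\frac{(N+2)^2}{N+4}$.
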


\begin{proof}
   Choosing the same test functions we chose in the proof of Lemma \ref{lemma:stima_un_infty:dep_n}, we get
    $$
    \frac12 \io G_k(u_n(t))^2+\alpha\iot\abs{\nabla G_k(u_n)}^2
    \leq 
    \iot \abs{T_n(u_n)}^\theta\left[\Theta_n(u_n)-\Theta_n(k)\right]^++\iot T_n(f) G_k(u_n).
    $$
    Observe that
    \begin{equation}
        \left[\Theta_n(u_n)-\Theta_n(k)\right]^+=\int_k^{u_n}T_n(\sigma)\mathrm{d}\sigma\rchi_{\{u_n\geq k\}}\leq u_n G_k(u_n),
    \end{equation}
    which implies
    $$
    \frac12 \io G_k(u_n(t))^2+\alpha\iot\abs{\nabla G_k(u_n)}^2
    \leq 
    \ioT u_n^{\theta+1}G_k(u_n)+\ioT f(x,t) G_k(u_n).
    $$
    Define the function $\tilde f_n(x,t)=u_n^{\theta+1}(x,t)+f(x,t)$, so that the right-hand side can be written as follows:
    \begin{equation}
        \frac12 \io G_k(u_n(t))^2+\alpha\iot\abs{\nabla G_k(u_n)}^2
        \leq 
        \ioT \tilde f_n(x,t) G_k(u_n).
    \end{equation}

    Observe that, by Lemma \ref{lemma:stima_m**_unif}, the sequence $(u_n)_n$ is bounded in $\elleomT p$ for all $p<\infty$, which implies that the sequence $(\tilde f_n)_n$ is bounded in $\elleomT m$. In particular, there exists a constant $C>0$ (which we will tacitly increase if needed) such that
    \begin{equation}\label{Bound_fntilde}
        \norma{\tilde f_n}{\elleomT m}\leq  C(1+ \norma{f}{\elleomT m}).
    \end{equation}

    By repeating the argument used in the proof of Lemma \ref{lemma:stima_m**_unif}, we obtain
    \begin{equation}
        \left\{\begin{aligned}
        & \norma{G_k(u_n)}{\elle\infty(0,T;\elleom2)}^2\leq C \int_{\Omega_T}\tilde f_n(x,t)G_k(u_n)\\
        & \norma{\nabla G_k(u_n)}{\elleomT2}^2\leq C \int_{\Omega_T}\tilde f_n(x,t)G_k(u_n).
        \end{aligned}\right.
    \end{equation}
    Which, by Proposition \ref{GNSparabTheo}, implies 
    \begin{equation}\label{proof:lemma:stimaLinfty_unif:eq1}
    \int_{\Omega_T}G_k(u_n)^{2\frac{N+2}{N}}
    %\leq \norma{G_k(u_n)}{\elle\infty(0,T;\elleom2)}^\frac{4}{N}\norma{\nabla G_k(u_n)}{\elleomT2}^2
    \leq 
    C \left(\int_{u_n\geq k}\tilde f_n(x,t)G_k(u_n)\right)^{\frac{N+2}{N}}.
    \end{equation}
    We now proceed as in the proof of Lemma \ref{lemma:stima_un_infty:dep_n}: consider the set $A_k$ defined in \eqref{proof:bounded:def_Ak} and apply the H\"older inequality with exponent $2\frac{N+2}{N}$ to the last term of \eqref{proof:lemma:stimaLinfty_unif:eq1}, obtaining
    $$
    \int_{A_k}G_k(u_n)^{2\frac{N+2}{N}}\leq C\left(\int_{A_k}G_k(u_n)^{2\frac{N+2}{N}}\right)^{\frac12}\left(\int_{A_k}\tilde f_n(x,t)^{\frac{2N+4}{N+4}}\right)^{\frac{N+4}{2N}},
    $$
    which implies
    $$
    \int_{A_k}G_k(u_n)^{2\frac{N+2}{N}}\leq C\left(\int_{A_k}\tilde f_n(x,t)^{\frac{2N+4}{N+4}}\right)^{\frac{N+4}{N}}.
    $$
    Since $m>\frac{N+2}{2}>\frac{2 N+4}{N+4}$, a further use of the H\"older inequality, along with \eqref{Bound_fntilde}, yields
    $$
    \int_{A_k} G_k(u)^{2 \frac{N+2}{N}} \leq C(1+\norma{f}{\elle m(\Omega_T)})^{\frac{2(N+2)}{N}} \abs{A_k}^{\frac{N+4}{N}-\frac{2(N+2)}{N m}} .
    $$
    Let $h>k$. After cancellations, we obtain
    $$
    \abs{A_h}(h-k)^{2 \frac{N+2}{N}} \leq \int_{A_k} G_k(u)^{2 \frac{N+2}{N}}
    \leq C  (1+\norma{f}{\elle m(\Omega_T)})^{\frac{2(N+2)}{N}}
    \abs{A_k}^{\frac{N+4}{N}-\frac{2(N+2)}{Nm}}
    $$
    Setting $\lambda(h)=\abs{A_h}$ and recalling that $m>\frac{N+2}{2}$, we can apply Lemma \ref{LemmaStampacchiaLinfty} to conclude.
    % that there exists a constant $d$, depending only on $m$, $\norma{f}{\elle m(\Omega_T)}$ and $\alpha$, such that $\lambda(d)=0$, that is
    % $$
    % \norma{u_n}{\elle \infty(\Omega_T)} \leq d,
    % $$
    % which concludes the proof.
\end{proof}

We can now prove the existence of a solution in the finite-energy case. Using the bounds established so far, we pass to the limit in \eqref{eq:approx_KS_system} as $n\to+\infty$. In this way, we obtain a solution $(u,\psi)$ to \eqref{eq:KS-Sys-ParabEll} with the summability properties stated in Lemma \ref{lemma:stima_Linfty_unif} and Lemma \ref{lemma:stima_m**_unif}.

Although the proofs of Theorem \ref{Main_Theorem_1} and Theorem \ref{Main_Theorem_2} are based on the same general strategy, some technical details differ slightly. For clarity, we therefore present them separately.

\begin{proof}[Proof of Theorem \ref{Main_Theorem_1}]
    We write the first equation of \eqref{eq:approx_KS_system} as
    $$(u_n)_t=\operatorname{div}(A(x, t) \nabla u_n)-\operatorname{div}(T_n(u_n) M(x) \nabla \psi_n)+T_n(f(x,t))$$
    and observe that, by Lemma \ref{lemma:stima_m**_unif} and Lemma \ref{lemma:stima_Linfty_unif},
    \begin{itemize}
        \item $\operatorname{div}(A(x, t) \nabla u_n)$ is bounded in $\elle2(0,T;H^{-1}(\Omega))$;
        \item $T_n(u_n) M(x) \nabla \psi_n$ is bounded in $\elleomT2$;
        \item  $T_n(f(x,t))$ is bounded in $\elleomT m$.
    \end{itemize}
    Thus, the sequence $((u_n)_t)_n$ in bounded in $\elle2(0,T;H^{-1}(\Omega))$.
    We now apply \cite[Corollary 4]{Simon} to obtain that the sequence $(u_n)_n$ has a converging subsequence in $\elleomT2$. Let $u$ be the limit of such a subsequence: we have that $u\in\elle2(0,T;\Hom)$ (since $(u_n)_n$ is bounded in $\elle2(0,T;\Hom)$ and $\nabla u_n\rightharpoonup\nabla u$). Moreover, $u_t\in\elle2(0,T;H^{-1}(\Omega))$ and, by Lemma \ref{lemma:stima_Linfty_unif}, $u\in\elleomT\infty$. This also implies (see \cite[Section 5.9.2]{EvansPDE}) that $u\in C^0([0,T];\elleom2)$.
    
    By Lemma \ref{lemma:psin_bounded_unif}, there exists a subsequence of $(\psi_n)_n$ that strongly converges in $\elleomT2$ and weakly converges in $\elle2(0,T;\Hom)$ to some function $\psi(x,t)$ which is a weak solution to
    \[
    -\operatorname{div}(M(x)\nabla\psi)=u^\theta(x,t)\qquad \text{in }\Omega
    \]
    for every $t\in (0,T)$, with zero Dirichlet boundary conditions.
    Moreover, since $u_n^\theta$ is strongly convergent in $\elleomT p$ with $p>\frac{N}{2}$, by applying standard elliptic estimates \cite{BoccardoCroce} to the equation
    \[
    -\operatorname{div}(M(x)\nabla(\psi_n-\psi))=u_n^\theta-u^\theta,
    \]
    we get (up to subsequences):
    \begin{itemize}
        \item $\psi_n\to\psi$ in $\elle\infty(0,T;\Hom)$, which implies that $\psi\in C^0([0,T];\Hom)$; 
        \item $\psi_n\to\psi$ in $\elle\infty(\Omega_T)$, which implies that $\psi\in \elle\infty(\Omega_T)$.
    \end{itemize}

    Let $\varphi$ and $v$ be test functions as in the statement of Theorem \ref{Main_Theorem_1}. Then, we take the limit (up to subsequences) as $n\to\infty$ in 
    \begin{equation}
        \begin{dcases}
                \int_0^T \langle (u_n)_t,\varphi\rangle+\ioT A(x,t)\nabla u_n\cdot \nabla \varphi = \ioT T_n(u_n) M(x)\nabla \psi_n\cdot\nabla\varphi + \ioT T_n(f)\varphi\\
                \io M(x)\nabla \psi_n(t)\cdot\nabla v = \io u_n(t)^\theta v,
        \end{dcases}
    \end{equation}
    to conclude that the couple $(u,\psi)$ is a solution to \eqref{eq:KS-Sys-ParabEll} in the sense \eqref{DefSol_parabell_theo1}.
\end{proof}

\begin{proof}[Proof of Theorem \ref{Main_Theorem_2}]
    We write the first equation of \eqref{eq:approx_KS_system} as
    $$(u_n)_t=\operatorname{div}(A(x, t) \nabla u_n)-\operatorname{div}(T_n(u_n) M(x) \nabla \psi_n)+T_n(f(x,t))$$
    and observe that, by Lemma \ref{lemma:stima_m**_unif}:
    \begin{itemize}
        \item $\operatorname{div}(A(x, t) \nabla u_n)$ is bounded in $\elle2(0,T;H^{-1}(\Omega))$;
        \item $T_n(u_n) M(x) \nabla \psi_n$ is bounded in $\elleomT p$ with $\frac1p=\frac12+\frac 1m-\frac2{N+2}$ (note that $p\geq1$)
        \item  $T_n(f(x,t))$ is bounded in $\elleomT m$.
    \end{itemize}
    This implies that the sequence $((u_n)_t)_n$ in bounded in $\elle1(0,T;W^{-1,1}(\Omega))$.
    Thus, we can apply \cite[Corollary 4]{Simon} to obtain that the sequence $(u_n)_n$ has a converging subsequence in $\elleomT2$. Let $u$ be the limit of such a subsequence: we have that $u\in\elle2(0,T;\Hom)$ (since $(u_n)_n$ is bounded in $\elle2(0,T;\Hom)$ and since $\nabla u_n\rightharpoonup\nabla u$).

    By Lemma \ref{lemma:psin_bounded_unif}, there exists a subsequence of $(\psi_n)_n$ that strongly converges in $\elleomT2$ and weakly converges in $\elle2(0,T;\Hom)$ to some function $\psi(x,t)$ which is a weak solution to
    \[
    -\operatorname{div}(M(x)\nabla\psi)=u^\theta(x,t)\qquad \text{in }\Omega
    \]
    for every $t\in (0,T)$, with zero Dirichlet boundary conditions.
    Moreover, since $u_n^\theta$ is strongly convergent in $\elleomT p$ with $p>\frac{N}{2}$, by applying standard elliptic estimates \cite{BoccardoCroce} to the equation
    \[
    -\operatorname{div}(M(x)\nabla(\psi_n-\psi))=u_n^\theta-u^\theta,
    \]
    we get (up to subsequences):
    \begin{itemize}
        \item $\psi_n\to\psi$ in $\elle\infty(0,T;\Hom)$, which implies that $\psi\in C^0([0,T];\Hom)$; 
        \item $\psi_n\to\psi$ in $\elle\infty(\Omega_T)$, which implies that $\psi\in \elle\infty(\Omega_T)$.
    \end{itemize}

    Let $\varphi$ and $v$ be test functions as in the statement of Theorem \ref{Main_Theorem_2}. Then, we take the limit (up to subsequences) as $n\to\infty$ in 
    \begin{equation}
        \begin{dcases}
                \int_0^T \langle (u_n)_t,\varphi\rangle+\ioT A(x,t)\nabla u_n\cdot \nabla \varphi = \ioT T_n(u_n) M(x)\nabla \psi_n\cdot\nabla\varphi + \ioT T_n(f)\varphi\\
                \io M(x)\nabla \psi_n(t)\cdot\nabla v = \io u_n(t)^\theta v,
        \end{dcases}
    \end{equation}
    to conclude that the couple $(u,\psi)$ is a solution to \eqref{eq:KS-Sys-ParabEll} in the sense \eqref{DefSol_parabell_theo2}.
\end{proof}

\begin{remark}
    Recall that, due to \eqref{StimaGNSParabEllStampacchia2} and \eqref{HolderRHSParabellstampacchia}, the sequence $(u_n)_n$ is also bounded in $\elle\infty(0,T;\elleom {2\gamma})$ and in $\elleomT{2\gamma\frac{N+2}{N}}$ with $2\gamma=\frac{Nm}{N+2-2m}$. We can interpolate between the two spaces with exponents $r$ and $q$ such that (letting $\lambda\in[0,1]$)
    $$\frac{1}{r}=\frac{\lambda}{2\gamma\frac{N+2}{N}}\quad\quad \frac{1}{q}=\frac{1-\lambda}{2\gamma}+\frac{\lambda}{2\gamma\frac{N+2}{N}}$$
    to obtain
    $$\int_0^T \norma{u_n}{\elleom q}^r
    \leq 
    \int_0^T\norma{u_n}{\elle\infty(\elleom {2\gamma})}^{r(1-\lambda)}\norma{u_n}{2\gamma\frac{N+2}{N}}^{r\lambda}
    =\norma{u_n}{\elle\infty(\elleom {2\gamma})}^{r(1-\lambda)}\norma {u_n}{\elleomT{2\gamma\frac{N+2}{N}}}.$$
    It follows that $(u_n)_n$ is bounded in $\elle r(0,T;\elleom q)$ and thus, by Fatou's Lemma, $u\in \elle r(0,T;\elleom q)$.
\end{remark}

\subsection{Existence of a distributional solution with singular data}\label{sec:distrib_sol}

If the datum $f$ belongs to $\elleomT m$ with $m\in[1,\frac{2N+4}{N+4})$, we can prove the existence of a solution found by approximation. Recall that all the properties of the sequence $(\psi_n)_n$ proved in the previous sections only require $f$ to belong to $\elleomT1$.

We now prove a boundedness result for the sequence $(u_n)_n$ which extends Lemma \ref{lemma:stima_m**_unif} to the range $1<m<\frac{2N+4}{N+4}$.
\begin{lemma}\label{lemma:stima_m**_unif:infinite_energy}
    Let $f\in\elle m (\Omega_T)$ with $1<m<\frac{2N+4}{N+4}$, then the sequence $(u_n)_n$ is bounded in $\elleomT {m^{\star\star}}$.
\end{lemma}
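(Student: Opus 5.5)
We repeat the proof of Lemma \ref{lemma:stima_m**_unif}, but now with $\gamma=\frac{Nm}{2(N+2-2m)}\in(\frac12,1)$. The condition $\gamma>\frac12$ holds exactly because $m>1$. Since $u_n^{2\gamma-1}$ is no longer Lipschitz near $u_n=0$, we regularize the test functions. In the first equation of \eqref{eq:approx_KS_system} we take
\[
\varphi_\varepsilon=(u_n+\varepsilon)^{2\gamma-1}-\varepsilon^{2\gamma-1},
\]
which is admissible by Lemma \ref{lemma:stima_un_infty:dep_n} and the nonnegativity of $u_n$. In the second equation we take
\[
F_{n,\varepsilon}(u_n)=\int_0^{u_n}(\sigma+\varepsilon)^{2\gamma-2}T_n(\sigma)\,\mathrm d\sigma .
\]
Its gradient is $(u_n+\varepsilon)^{2\gamma-2}T_n(u_n)\nabla u_n$. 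This is exactly what is needed to rewrite the drift term $(2\gamma-1)\io T_n(u_n)(u_n+\varepsilon)^{2\gamma-2}M\nabla\psi_n\cdot\nabla u_n$ as
\[
(2\gamma-1)\io T_n(u_n)^\theta F_{n,\varepsilon}(u_n).
\]
We then bound this by $\frac{2\gamma-1}{2\gamma-1+1}\io u_n^\theta(u_n+\varepsilon)^{2\gamma}$, using $F_{n,\varepsilon}(s)\le \int_0^s(\sigma+\varepsilon)^{2\gamma-1}\le \frac{(s+\varepsilon)^{2\gamma}}{2\gamma}$.

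This gives an analogue of \eqref{basta-gamma-geq-frac12}, with left-hand side
\[
\frac{d}{dt}\io \Phi_\varepsilon(u_n)+\frac{\alpha(2\gamma-1)}{\gamma^2}\io|\nabla((u_n+\varepsilon)^\gamma-\varepsilon^\gamma)|^2,
\]
where $\Phi_\varepsilon$ is the primitive of $\varphi_\varepsilon$. The constant $\frac{\alpha(2\gamma-1)}{\gamma^2}$ is positive precisely because $\gamma>\frac12$. The right-hand side is
\[
C\io u_n^\theta(u_n+\varepsilon)^{2\gamma}+\io f\,(u_n+\varepsilon)^{2\gamma-1}.
\]
We absorb the reaction term exactly as before. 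On $\{u_n\le k\}$ it is bounded by a constant. On $\{u_n\ge k\}$ we use H\"older with exponents $\frac N2,\frac N{N-2}$ together with Lemma \ref{StimaL1ParabEll}: the factor $\bigl(\int_{u_n\ge k}u_n^{\theta N/2}\bigr)^{2/N}\le (\norma f{\elle1}/k^{1-\theta N/2})^{2/N}$ is small for $k$ large, since $\theta<\frac2N$. The Sobolev inequality applied to $(u_n+\varepsilon)^\gamma-\varepsilon^\gamma$ then lets this piece be absorbed into the gradient term; the extra $\varepsilon$-terms are harmless, being bounded uniformly for $\varepsilon\le1$. Integrating in time and letting $\varepsilon\to0$ by Fatou and monotone convergence, we recover \eqref{StimaGNSParabEllStampacchia2} for $v=u_n^\gamma$. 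Here we use $\Phi_\varepsilon(s)\to\frac{s^{2\gamma}}{2\gamma}$ and $(u_n+\varepsilon)^{2\gamma-1}\downarrow u_n^{2\gamma-1}$, the latter being integrable against $f$ once $u_n$ is bounded.

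The conclusion is then identical to the earlier proof. Proposition \ref{GNSparabTheo} applied to $u_n^\gamma$ gives
\[
\ioT u_n^{m^{\star\star}}\le C\Bigl(1+\norma f{\elleomT m}\bigl(\ioT u_n^{m^{\star\star}}\bigr)^{1/m'}\Bigr)^{\frac{N+2}N},
\]
with $(2\gamma-1)m'=m^{\star\star}$ and $\frac1{m'}<\frac N{N+2}$, and this yields the uniform bound.

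The main obstacle is the regularization step: making the drift cancellation work with $\varepsilon$-shifted powers when $2\gamma-2<0$, and checking that all terms pass to the limit as $\varepsilon\to0$ uniformly in $n$. If the $\varepsilon$-bookkeeping in the cross term becomes awkward, I would instead use $T_n$-free test functions of the form $[(u_n+\varepsilon)^{2\gamma-1}-\varepsilon^{2\gamma-1}]$ combined with the pointwise bound $T_n(s)\le s+\varepsilon$.
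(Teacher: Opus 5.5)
Your proposal is correct and takes essentially the same route as the paper. It uses the same $\gamma=\frac{Nm}{2(N+2-2m)}\in(\frac12,1)$ and the same $\varepsilon$-shifted test functions $F_n^\varepsilon(u_n)$ and $(u_n+\varepsilon)^{2\gamma-1}-\varepsilon^{2\gamma-1}$, so that the drift term cancels exactly through the second equation; it then finishes as in Lemma \ref{lemma:stima_m**_unif}. The only difference is harmless: the paper first lets $\varepsilon\to0$ by Fatou and then absorbs the reaction term, whereas you absorb it at fixed $\varepsilon$ via Sobolev applied to $(u_n+\varepsilon)^\gamma-\varepsilon^\gamma$, so your fallback alternative is not needed.
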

\begin{proof}
    Let $\gamma>\frac{1}{2}$ and $\varepsilon>0$. We define the function
    $$F_n^\varepsilon(s)=\int_0^s (\sigma+\varepsilon)^{2\gamma-2}T_n(\sigma) \mathrm{d}\sigma$$
    and choose $F_n^\varepsilon(u_n)$ as a test function in the second equation of \eqref{eq:approx_KS_system} to get
    $$
    \io (u_n+\varepsilon)^{2\gamma-2}T_n(u_n) M(x)\nabla \psi_n\cdot\nabla u_n 
    =
    \io M(x)\nabla \psi_n\cdot\nabla F_n^\varepsilon(u_n)
    =
    \io T_n(u_n)^\theta F_n^\varepsilon(u_n)
    $$
    $$
    \leq \frac{1}{2\gamma}\io T_n(u_n)^\theta (u_n+\varepsilon)^{2\gamma}
    \leq
    \frac{1}{2\gamma}\io u_n^{\theta} (u_n+\varepsilon)^{2\gamma}
    $$
    where we used 
    $$F_n^\varepsilon(s)\leq \int_0^s (\sigma+\varepsilon)^{2\gamma-1}\mathrm{d}\sigma\leq (s+\varepsilon)^{2\gamma}-\varepsilon^{2\gamma}\leq (s+\varepsilon)^{2\gamma}.$$
    Now choose 
    $$g_\varepsilon(u_n)=\frac{(u_n+\varepsilon)^{2\gamma-1}-\varepsilon^{2\gamma-1}}{2\gamma-1}$$
    (which belongs to $\elleomT\infty$ by Lemma \ref{lemma:stima_un_infty:dep_n}) as a test function in the first equation of \eqref{eq:approx_KS_system} to get
    $$\langle (u_n)_t,g_\varepsilon(u_n)\rangle
    +\io(u_n+\varepsilon)^{2\gamma-2}A(x,t)\nabla u_n \cdot \nabla u_n$$
    $$
    \leq\io (u_n+\varepsilon)^{2\gamma-2}T_n(u_n) M(x)\nabla \psi_n\cdot\nabla u_n 
    +\io f(x,t)g_\varepsilon(u_n).$$
    Thus
    $$\langle (u_n)_t,g_\varepsilon(u_n)\rangle
    +\io(u_n+\varepsilon)^{2\gamma-2}A(x,t)\nabla u_n \cdot \nabla u_n
    \leq
    \frac{1}{2\gamma}\io u_n^{\theta} (u_n+\varepsilon)^{2\gamma}+\io f(x,t)g_\varepsilon(u_n).$$
    Integration from $0$ to $t$ leads to
    $$
    \io G_\varepsilon(u_n(t)) + \iot (u_n+\varepsilon)^{2\gamma-2}A(x,s)\nabla u_n \cdot \nabla u_n
    \leq
    \frac{1}{2\gamma}\iot u_n^{\theta} (u_n+\varepsilon)^{2\gamma}+\iot f(x,s)g_\varepsilon(u_n),
    $$
    where $G_\varepsilon$ is the primitive of $g_\varepsilon$ such that $G_\varepsilon(0)=0$.
    
    We can now apply Fatou's Lemma (notice that $G_\varepsilon(s)\to \frac{1}{2\gamma(2\gamma-1)}s^{2\gamma}$ as $\varepsilon\to0$ and that $G_\varepsilon(s)\geq 0$ for $s\geq 0$) to obtain
    $$\frac{1}{{2\gamma(2\gamma-1)}}\io u_n^{2\gamma}(t)
    +\frac{\alpha}{\gamma^2}\iot\abs{\nabla u_n^\gamma}^2
    \leq
    \frac{1}{2\gamma}\iot u_n^\theta u_n^{2\gamma}+\frac{1}{2\gamma-1}\iot f(x,s)u_n^{2\gamma-1}.$$
    At this point, it suffices to follow the proof of Lemma \ref{lemma:stima_m**_unif} to conclude.
\end{proof}
\begin{remark}\label{Stima-u_ngamma-bad}
    The previous result also implies that
    \begin{equation}\label{Remark-bound-u_ngamma}
        \left\{\begin{aligned}
        & \norma{u_n^\gamma}{\elle\infty(0,T;\elleom2)}^2\leq C\left(k^{\theta+2\gamma}\abs{\Omega}T+\ioT f(x,t)u_n^{2\gamma-1}\right).\\
        & \norma{\nabla u_n^\gamma}{\elleomT2}^2\leq C\left(k^{\theta+2\gamma}\abs{\Omega}T+\ioT f(x,t)u_n^{2\gamma-1}\right).
        \end{aligned}\right.
    \end{equation}
    Since $(2\gamma-1)m'=m^{\star\star}$, \eqref{Remark-bound-u_ngamma} implies that the sequence $(u_n^\gamma)_n$ is bounded in $\elle\infty(0,T;\elleom2)$ and in $\elle2(0,T;\Hom)$.
\end{remark}

Recall that, in the elliptic case (see e.g. \cite{BO-Kellersegel}), singular data (that is, $f\in\elleomT{m}$ with $m<\frac{2N}{N+2}$) prevent the solution from having finite Dirichlet energy. However, the sequence of approximating solutions is bounded in $\Sobom{\frac{Nm}{N-m}}$ whenever $1<m<\frac{2N}{N+2}$. 
Since the parabolic $L^p$-regularity results \cite{Aronson-Serrin,BDGO-parabolic} typically behave as the elliptic ones in dimension $N+2$,
we expect $(\nabla u_n)_n$ to be bounded in $\left(\elleomT{m^{\star}}\right)^N$. Indeed, this is the case.
\begin{lemma}\label{lemma:u_nW1m*_KS_parabell}
    Let $f\in\elleomT m$, with $1<m<\frac{2N+4}{N+4}$. Then the sequence of approximating solutions is bounded in $\elle {m^\star}(0,T;\Sobom{m^\star})$ (where ${m^\star}=\frac{(N+2)m}{(N+2)-m})$.
\end{lemma}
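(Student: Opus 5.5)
The argument is the standard Boccardo--Gallouët trick of writing $\nabla u_n$ through the weighted gradient $\nabla u_n^\gamma$ from Remark \ref{Stima-u_ngamma-bad}, followed by H\"older's inequality and Lemma \ref{lemma:stima_m**_unif:infinite_energy}. Fix $1<m<\frac{2N+4}{N+4}$ and let $\gamma=\frac{Nm}{2(N+2-2m)}$, as in the proof of Lemma \ref{lemma:stima_m**_unif}. This choice gives $(2\gamma-1)m'=m^{\star\star}=2\gamma\frac{N+2}{N}$. The restriction $m<\frac{2N+4}{N+4}$ is equivalent to $\gamma<1$, and $m>1$ gives $\gamma>\frac12$ (at $m=1$ one has $\gamma=\frac{N}{2(N)}=\frac12$).

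\textbf{Step 1.} By Remark \ref{Stima-u_ngamma-bad}, together with H\"older's inequality \eqref{HolderRHSParabellstampacchia} and the uniform $\elleomT{m^{\star\star}}$ bound of Lemma \ref{lemma:stima_m**_unif:infinite_energy}, there is a constant $C$ independent of $n$ such that
\[
\ioT u_n^{2\gamma-2}\abs{\nabla u_n}^2=\frac1{\gamma^2}\ioT\abs{\nabla u_n^\gamma}^2\leq C .
\]
Since $2\gamma-2<0$, this is a weighted estimate with a degenerate weight. To justify it rigorously I would keep the $\varepsilon$-regularization from that proof, i.e.\ use the bound on $\ioT (u_n+\varepsilon)^{2\gamma-2}\abs{\nabla u_n}^2$, which is uniform in $\varepsilon$, and let $\varepsilon\to0$ at the end.

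\textbf{Step 2.} Let $q=m^\star<2$. H\"older's inequality with exponents $\frac2q$ and $\frac{2}{2-q}$ gives
\[
\ioT\abs{\nabla u_n}^{q}=\ioT \frac{\abs{\nabla u_n}^q}{(u_n+\varepsilon)^{(1-\gamma)q}}(u_n+\varepsilon)^{(1-\gamma)q}
\leq\Bigl(\ioT\frac{\abs{\nabla u_n}^2}{(u_n+\varepsilon)^{2-2\gamma}}\Bigr)^{\frac q2}\Bigl(\ioT(u_n+\varepsilon)^{\frac{2(1-\gamma)q}{2-q}}\Bigr)^{\frac{2-q}{2}}.
\]
The first factor is bounded by Step 1. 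For the second factor, the exponent must satisfy $\frac{2(1-\gamma)q}{2-q}=m^{\star\star}$. This reduces to an algebraic identity that I would verify. With $2\gamma=m^{\star\star}\frac{N}{N+2}$, we get $2(1-\gamma)=2-\frac{Nm}{N+2-2m}=\frac{2(N+2)-(N+4)m}{N+2-2m}$. Also $q=\frac{(N+2)m}{N+2-m}$ gives $2-q=\frac{2(N+2)-(N+4)m}{N+2-m}$. Therefore
\[
\frac{2(1-\gamma)q}{2-q}=\frac{N+2-m}{N+2-2m}\cdot\frac{(N+2)m}{N+2-m}=m^{\star\star}.
\]
So the second factor is bounded uniformly by Lemma \ref{lemma:stima_m**_unif:infinite_energy}, with $\varepsilon\leq1$ and $\abs{\Omega_T}<\infty$. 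Letting $\varepsilon\to0$ yields a uniform bound for $\nabla u_n$ in $\elleomT{m^\star}$.

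\textbf{Step 3.} Since $u_n\in\elle2(0,T;\Hom)$ for each $n$ and $m^\star<2$, we have $u_n\in\elle{m^\star}(0,T;\Sobom{m^\star})$. Poincaré's inequality applied at each time then converts the gradient bound into the claimed bound in $\elle{m^\star}(0,T;\Sobom{m^\star})$.

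The main obstacle is Step 1. The negative power $2\gamma-2$ makes $u_n^{\gamma}$ non-Lipschitz near $u_n=0$, so one must make sure the estimate of Remark \ref{Stima-u_ngamma-bad} really controls the $\varepsilon$-weighted quantity uniformly in both $n$ and $\varepsilon$. This holds because the Fatou step in Lemma \ref{lemma:stima_m**_unif:infinite_energy} can be skipped: one keeps the $\varepsilon$-version of the inequality and bounds its right-hand side by the $m^{\star\star}$-norms via the same absorption argument.
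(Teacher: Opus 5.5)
Your proof is correct and follows the paper's argument. You use the same H\"older splitting of $\abs{\nabla u_n}^{m^\star}$ against the weight $u_n^{m^\star(1-\gamma)}$ with $\gamma=\frac{Nm}{2(N+2-2m)}$, the same identity $\frac{2m^\star}{2-m^\star}(1-\gamma)=m^{\star\star}$, and the same two inputs, Remark \ref{Stima-u_ngamma-bad} and Lemma \ref{lemma:stima_m**_unif:infinite_energy}. Your $\varepsilon$-shift $u_n+\varepsilon$ in the weight only makes rigorous the division by $u_n$ on $\{u_n=0\}$, which the paper leaves implicit; for any fixed $\varepsilon\leq1$ it already gives a bound uniform in $n$, so the final limit $\varepsilon\to0$ is not needed.
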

\begin{proof}
    To prove this result, we use the same technique employed in \cite{BoccardoJDE}. Our objective is to exploit the H\"older inequality, Lemma \ref{lemma:stima_m**_unif:infinite_energy} and the fact that $\frac12<\gamma<1$ (see \eqref{gamma1-parabell}) to gain an estimate on the $\elle {m^\star}$ norm of $\nabla u_n$.
    More precisely, we have

    $$\ioT \abs{\nabla u_n}^{m^\star} =
    \ioT \frac{\abs{\nabla u_n}^{m^\star}}{u_n^{{m^\star}(1-\gamma)}}u_n^{{m^\star}(1-\gamma)} 
    \leq
    \left(\ioT \frac{\abs{\nabla u_n}^2}{u_n^{2(1-\gamma)}}\right)^\frac{{m^\star}}{2}
    \left(\ioT u_n^{\frac{2{m^\star}}{2-{m^\star}}(1-\gamma)}\right)^{1-\frac{{m^\star}}{2}}.$$

    Now observe that, choosing $\gamma$ as in the proof of Lemma \ref{lemma:stima_m**_unif:infinite_energy}, we get
    $$\frac{2{m^\star}}{2-{m^\star}}(1-\gamma)=
    \frac{(N+2)m}{2N+4-4m-Nm}\frac{2N+4-4m-Nm}{(N+2)-2m}
    = \frac{(N+2)m}{(N+2)-2m}
    =m^{\star\star},$$
    which implies that, by Lemma \ref{lemma:stima_m**_unif:infinite_energy},
    $$\ioT u_n^{\frac{2{m^\star}}{2-{m^\star}}(1-\gamma)}=\ioT u_n^{m^{\star\star}}\leq C.$$
    On the other hand, notice that 
    $$\ioT \frac{\abs{\nabla u_n}^2}{u_n^{2(1-\gamma)}}=\frac{1}{\gamma^2}\norma{\nabla u_n^\gamma}{\elleomT2}^2,$$
    which is bounded by Remark \ref{Stima-u_ngamma-bad}. This concludes the proof.
\end{proof}

We now prove Theorem \ref{Main_Theorem_3}. Recall that, to apply \cite[Corollary 4]{Simon} to gain compactness of the sequence $(u_n)_n$ in some strong topology, we need a uniform bound on the time derivatives of the sequence $(u_n)_n$, which requires the convection term
$$-\operatorname{div}(T_n(u_n) M(x) \nabla \psi_n)$$ 
to be bounded in (at least) $\elle1(0,T;W^{-1,1}(\Omega))$. 
For this to be possible, since we only know that $(\nabla \psi_n)_n$ is bounded in $\left(\elleomT2\right)^N$, it is necessary for $(u_n)_n$ to be bounded in (at least) $\elleomT2$. Such requirement is equivalent to
$$m^{\star\star}=\frac{(N+2)m}{N+2-2m}\geq2,$$
which means that $m$ shall be greater than $\frac{2N+4}{N+6}$.

\begin{proof}[Proof of Theorem \ref{Main_Theorem_3}]
    We write the first equation of \eqref{eq:approx_KS_system} as
    $$(u_n)_t=\operatorname{div}(A(x, t) \nabla u_n)-\operatorname{div}(T_n(u_n) M(x) \nabla \psi_n)+T_n(f(x,t))$$
    and observe that, by Lemma \ref{lemma:u_nW1m*_KS_parabell}:
    \begin{itemize}
        \item $\operatorname{div}(A(x, t) \nabla u_n)$ is bounded in $\elle{({m^\star})'}(0,T;W^{-1,1}(\Omega))$;
        \item $T_n(u_n) M(x) \nabla \psi_n$ is bounded in $\elleomT1$;
        \item  $T_n(f(x,t))$ is bounded in $\elleomT m$.
    \end{itemize}
    This implies that the sequence $((u_n)_t)_n$ in bounded in $\elle1(0,T;W^{-1,1}(\Omega))$.
    Thus, we can apply \cite[Corollary 4]{Simon} to obtain that the sequence $(u_n)_n$ has a converging subsequence in $\elleomT {m^{\star\star}}$. Let $u$ be the limit of such subsequence: we have $u\in\elle\infty(0,T;\elleom{\frac{Nm}{N+2-2m}})$, $u\in\elle {m^\star}(0,T;\Sobom{m^\star})$ (since $(u_n)_n$ is bounded in $\elle {m^\star}(0,T;\Sobom{m^\star})$ and since $\nabla u_n\rightharpoonup\nabla u$), and $u\in\elleomT {m^{\star\star}}$ by Fatou's Lemma and Lemma \ref{lemma:stima_m**_unif:infinite_energy}. Moreover, $u_t\in \elle1(0,T;W^{-1,1}(\Omega))$.

    By Lemma \ref{lemma:psin_bounded_unif}, there exists a subsequence of $(\psi_n)_n$ that strongly converges in $\elleomT2$ and weakly converges in $\elle2(0,T;\Hom)$ to some function $\psi(x,t)$ which is a weak solution to
    \[
    -\operatorname{div}(M(x)\nabla\psi)=u^\theta(x,t)\qquad \text{in }\Omega
    \]
    for every $t\in (0,T)$, with zero Dirichlet boundary conditions.
    Moreover, since $u_n^\theta$ is strongly convergent in $\elleomT p$ with $p>\frac{N}{2}$, by applying standard elliptic estimates \cite{BoccardoCroce} to the equation
    \[
    -\operatorname{div}(M(x)\nabla(\psi_n-\psi))=u_n^\theta-u^\theta,
    \]
    we get (up to subsequences):
    \begin{itemize}
        \item $\psi_n\to\psi$ in $\elle\infty(0,T;\Hom)$, which implies that $\psi\in C^0([0,T];\Hom)$; 
        \item $\psi_n\to\psi$ in $\elle\infty(\Omega_T)$, which implies that $\psi\in \elle\infty(\Omega_T)$.
    \end{itemize}

    Let $\varphi$ and $v$ be test functions as in the statement of Theorem \ref{Main_Theorem_2}. Then, we take the limit (up to subsequences) as $n\to\infty$ in 
    \begin{equation}
        \begin{dcases}
                \int_0^T \langle (u_n)_t,\varphi\rangle+\ioT A(x,t)\nabla u_n\cdot \nabla \varphi = \ioT T_n(u_n) M(x)\nabla \psi_n\cdot\nabla\varphi + \ioT T_n(f)\varphi\\
                \io M(x)\nabla \psi_n(t)\cdot\nabla v = \io u_n(t)^\theta v,
        \end{dcases}
    \end{equation}
    to conclude that the couple $(u,\psi)$ is a solution to \eqref{eq:KS-Sys-ParabEll} in the sense \eqref{DefSol_parabell_theo3}.
\end{proof}

\subsection{Existence of entropy solutions}\label{sec:entropy}

Since $\frac{2N+4}{N+6}>1$ if and only if $N>2$, which is our assumption, the techniques we have employed up until now fail for any $m\in [1,\frac{2N+4}{N+6})$. 
More precisely, since the terms $T_n(u_n)M(x)\nabla\psi_n$ are not bounded in $\elleomT1$, it is impossible to pass to the limit as $n\to+\infty$ in 
\[
\ioT T_n(u_n)M(x)\nabla\psi_n\cdot\nabla\varphi
\]
to prove the existence of a distributional solution to \eqref{eq:KS-Sys-ParabEll}. 
Instead, we prove the existence of an entropy solution (see Definition \ref{Def:entropysol}) to \eqref{eq:KS-Sys-ParabEll} system with any datum $f\in\elleomT1$.

In the next lemma, we show that, even though for data $f$ not belonging to $\elleomT{\frac{2N+4}{N+4}}$ one should not expect the sequence of approximate solutions $(u_n)_n$ to be bounded in $\elle2(0,T;\Hom)$, the sequence $(T_k(u_n))_n$ given by their truncations at any height $k>0$ is, indeed, bounded in $\elle2(0,T;\Hom)$.

\begin{lemma}\label{lemma:Bound_Troncate}
    Let $k\geq 0$ and $f\in\elleomT1$. Then, the sequence $(T_k(u_n))_n$ is bounded in $\elle2(0,T;\Hom)$.
\end{lemma}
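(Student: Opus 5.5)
We test the first equation of \eqref{eq:approx_KS_system} with $T_k(u_n)$, which belongs to $\elle2(0,T;\Hom)$ and is bounded by $k$. Recall that $u_n\geq0$. Integrating from $0$ to $T$ and writing $\Theta_k$ for the primitive of $T_k$ with $\Theta_k(0)=0$, we obtain
$$
\io \Theta_k(u_n(T)) + \alpha\ioT \abs{\nabla T_k(u_n)}^2 \leq \ioT T_n(u_n) M(x)\nabla\psi_n\cdot\nabla T_k(u_n) + \ioT T_n(f)T_k(u_n).
$$
Since $\Theta_k\geq0$, the first term may be dropped. Because $0\leq T_k(u_n)\leq k$, the last term is at most $k\norma{f}{\elleomT1}$.

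The key point is the convection term. The gradient $\nabla T_k(u_n)$ is supported where $u_n<k$, and there $T_n(u_n)\leq k$. Young's inequality therefore gives
$$
\ioT T_n(u_n) M(x)\nabla\psi_n\cdot\nabla T_k(u_n)\leq \frac{\beta^2k^2}{2\alpha}\ioT\abs{\nabla\psi_n}^2+\frac{\alpha}{2}\ioT\abs{\nabla T_k(u_n)}^2 .
$$
By Lemma \ref{lemma:psin_bounded_unif}, $\ioT\abs{\nabla\psi_n}^2\leq T\norma{\nabla\psi_n}{\elle\infty(0,T;\elleom2)}^2\leq CT\norma{f}{\elleomT1}^{2\theta}$, and this bound is uniform in $n$. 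Absorbing the last term into the left-hand side yields
$$
\frac{\alpha}{2}\ioT\abs{\nabla T_k(u_n)}^2\leq \frac{\beta^2k^2}{2\alpha}CT\norma{f}{\elleomT1}^{2\theta}+k\norma{f}{\elleomT1},
$$
which is the desired uniform bound. An alternative is to use $\nabla\psi_n\cdot\nabla T_k(u_n)$ together with the second equation, but that is unnecessary here.

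The only delicate point is justifying the time-derivative term, that is, $\int_0^T\langle (u_n)_t,T_k(u_n)\rangle=\io\Theta_k(u_n(T))$. This is standard because $u_n\in\elle2(0,T;\Hom)$, $(u_n)_t\in\elle2(0,T;H^{-1}(\Omega))$, $u_n(0)=0$, and $\Theta_k$ is convex with Lipschitz derivative. It can be proved by a Steklov average or a density argument, in the same way as the chain rule already used in Lemma \ref{StimaL1ParabEll}. The Poincaré inequality then turns the gradient bound into a bound on the full $\elle2(0,T;\Hom)$ norm.
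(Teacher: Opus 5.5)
Your proof is correct and is essentially the paper's own argument. Both test with $T_k(u_n)$, drop the nonnegative $\Theta_k$ term, and apply Young's inequality to the convection term using $T_n(u_n)\leq k$ on the support of $\nabla T_k(u_n)$. Both then invoke the uniform bound on $\nabla\psi_n$ from Lemma \ref{lemma:psin_bounded_unif} and absorb the gradient term. Your remark on justifying $\int_0^T\langle (u_n)_t,T_k(u_n)\rangle=\io\Theta_k(u_n(T))$ covers a detail the paper leaves implicit.
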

\begin{proof}
    Let us choose $T_k(u_n)$ as a test function in the first equation of \eqref{eq:approx_KS_system}
    $$
    \int_0^t \langle (u_n)_t,T_k(u_n)\rangle+\alpha\iot\abs{\nabla T_k(u_n)}^2\leq \iot T_n(u_n) M(x)\nabla\psi_n\cdot\nabla T_k(u_n) +\iot f(x,t)T_k(u_n).
    $$
    Applying Young's inequality to the right hand side (note that $\abs{T_n(u_n)}\leq k$ where $\nabla T_k(u_n)\neq0$) we obtain
    $$
    \io\Theta_k(u_n)+\alpha\iot\abs{\nabla T_k(u_n)}^2
    \leq 
    k^2\frac{\beta^2}{2\alpha}\ioT \abs{\nabla\psi_n}^2+\frac{\alpha}{2}\iot\abs{\nabla T_k(u_n)}^2 +k\ioT f(x,t).
    $$
    Here, as before, $\Theta_k$ denotes the primitive of $T_k$ such that $\Theta_k(0)=0$.
    After simplifying the gradient term and dropping the first (positive) term, we end up with
    $$
    \frac{\alpha}{2}\io\abs{\nabla T_k(u_n)}^2
    \leq 
    \frac{k^2\beta^2T}{2\alpha}\norma{\psi_n}{\elle\infty(0,T;\Hom)}^2+k\norma{f}{\elleomT1}.
    $$
    Which, in view of Lemma \ref{lemma:psin_bounded_unif}, concludes the proof.    
\end{proof}

Notice that Lemma \ref{lemma:stima_m**_unif:infinite_energy} does not include the case $f\in\elleomT1$. The following result, whose proof follows \cite{BO-Kellersegel}, gives an a priori estimate on the sequence $(u_n)_n$ in this case.

\begin{lemma}\label{lemma:stima1**}
    Let $f\in\elleomT1$, then the sequence of approximating solutions $(u_n)_n$ is bounded in $\elleomT p$ for every $p<\frac{N+2}{N}=1^{\star\star}$ and the sequence of gradients $(\nabla u_n)_n$ is bounded in $\elleomT q$ for every $q<\frac{N+2}{N+1}=1^{\star}$.
\end{lemma}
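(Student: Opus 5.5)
The plan is to repeat the proof of Lemma \ref{lemma:stima_m**_unif:infinite_energy}, now with an exponent $\gamma<\frac12$. For $f\in\elleomT1$ the natural test function $(u_n+\varepsilon)^{2\gamma-1}$ is no longer bounded by a power of $u_n$ that grows. We therefore use the bounded test function
\[
g(u_n)=1-\frac{1}{(1+u_n)^{\lambda}},\qquad \lambda>0 .
\]
Since $0\le g\le 1$, the datum term is controlled by $\norma{f}{\elleomT1}$.

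Testing the first equation of \eqref{eq:approx_KS_system} with $g(u_n)$ and integrating in time, the left-hand side contains $\lambda\alpha\iot \frac{\abs{\nabla u_n}^2}{(1+u_n)^{1+\lambda}}$, up to a constant times $\iot\abs{\nabla (1+u_n)^{\frac{1-\lambda}{2}}}^2$. The time term is $\io H(u_n(t))\ge 0$, where $H$ is the primitive of $g$. For the convection term we test the second equation with
\[
F(u_n)=\int_0^{u_n}\lambda (1+\sigma)^{-1-\lambda}T_n(\sigma)\,d\sigma,
\]
exactly as in Lemma \ref{lemma:stima_m**_unif}. This gives the bound $\io u_n^\theta F(u_n)\le C\io u_n^\theta (1+u_n)^{1-\lambda}$.

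This term is absorbed as in Lemma \ref{lemma:stima_m**_unif}. We split $\{u_n\le k\}$ from $\{u_n\ge k\}$ and on the second set apply Hölder with exponents $\frac N2$ and $\frac{N}{N-2}$. We then use the Sobolev inequality for $(1+u_n)^{\frac{1-\lambda}2}-1$, together with Lemma \ref{StimaL1ParabEll}, which gives $\int_{u_n\ge k}u_n^{\theta N/2}\le \norma{f}{\elleomT1}/k^{1-\theta N/2}$. Since $\theta<\frac2N$, choosing $k$ large absorbs this term. We conclude that $\nabla (1+u_n)^{\frac{1-\lambda}{2}}$ is bounded in $\elleomT2$ for every $\lambda\in(0,1)$.

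For the $\elle\infty(0,T;\elleom2)$ part we note that $\norma{(1+u_n)^{\frac{1-\lambda}{2}}}{\elle\infty(0,T;\elleom2)}^2=\sup_t\io (1+u_n)^{1-\lambda}$. This is bounded by Lemma \ref{StimaL1ParabEll}, since $1-\lambda<1$. Proposition \ref{GNSparabTheo}, applied to $v=(1+u_n)^{\frac{1-\lambda}2}-1$ (which vanishes on the boundary), then gives a bound on
\[
\ioT u_n^{(1-\lambda)\frac{N+2}{N}}
\]
up to lower-order terms. Letting $\lambda\to0$ covers every $p<\frac{N+2}{N}$.

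For the gradients we use the Hölder trick of Lemma \ref{lemma:u_nW1m*_KS_parabell}:
\[
\ioT\abs{\nabla u_n}^q\le\Big(\ioT\frac{\abs{\nabla u_n}^2}{(1+u_n)^{1+\lambda}}\Big)^{\frac q2}\Big(\ioT(1+u_n)^{\frac{(1+\lambda)q}{2-q}}\Big)^{1-\frac q2}.
\]
The last factor is bounded when $\frac{(1+\lambda)q}{2-q}<\frac{N+2}{N}$. As $\lambda\to0$ this condition becomes $q<\frac{N+2}{N+1}$. The expected main obstacle is the absorption of the convection term, where one has to check carefully that the exponents match: on $\{u_n\ge k\}$ we have $(1+u_n)^{1-\lambda}\le C\big((1+u_n)^{\frac{1-\lambda}{2}}-1\big)^2$, so the Sobolev step applies with the smallness supplied by $\theta<\frac2N$.
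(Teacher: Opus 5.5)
Your proposal is correct and follows the paper's proof. Your test function $1-(1+u_n)^{-\lambda}$ with $\lambda\in(0,1)$ is, up to the factor $\lambda$, the paper's $\frac{1-(1+u_n)^{1-\lambda}}{\lambda-1}$ with the paper's $\lambda\in(1,2)$ replaced by $1+\lambda$. The remaining steps are the same as the paper's: testing the second equation with the matching primitive, splitting at level $k$ via Lemma \ref{StimaL1ParabEll} and Sobolev, absorbing thanks to $\theta<\frac2N$, then applying Proposition \ref{GNSparabTheo} and the H\"older trick for the gradients. Two small variants are harmless: your multiplicative comparison on $\{u_n\ge k\}$, whose constant stays bounded for $k\ge1$, and your decoupling of the exponent in the gradient step from the one used for the $L^p$ bound.
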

\begin{proof}
    Let $1<\lambda<2$ and choose $\frac{1-(1+u_n)^{1-\lambda}}{\lambda-1}$ as a test function in the second equation of \eqref{eq:approx_KS_system}. Repeating the same steps as in the proof of Lemma \ref{lemma:stima_m**_unif:infinite_energy}, we define the function
    $$F_n(s)=\int_0^s (\sigma+1)^{-\lambda}T_n(\sigma) \mathrm{d}\sigma$$
    and choose $F_n(u_n)$ as a test function in the second equation of \eqref{eq:approx_KS_system} to get
    $$
    \io (u_n+1)^{-\lambda}T_n(u_n) M(x)\nabla \psi_n\cdot\nabla u_n 
    =
    \io M(x)\nabla \psi_n\cdot\nabla F_n(u_n)
    =
    \io T_n(u_n)^\theta F_n(u_n)
    $$
    $$
    \leq \io T_n(u_n)^\theta \frac{(u_n+1)^{2-\lambda}-1}{2-\lambda}
    \leq
    \io u_n^{\theta} \frac{(u_n+1)^{2-\lambda}-1}{2-\lambda}
    $$
    Now choose 
    $$g(u_n)=\frac{1-(1+u_n)^{1-\lambda}}{\lambda-1}$$
    (which is bounded from above by $\frac{1}{\lambda-1}$) as a test function in the first equation of \eqref{eq:approx_KS_system} to get
\begin{equation}\label{eq:proof:stima1**:eq0}
    \begin{split}
            \langle (u_n)_t,g(u_n)\rangle&
    +\io(u_n+1)^{-\lambda}A(x,t)\nabla u_n \cdot \nabla u_n\\&
    \leq
    \io (u_n+1)^{-\lambda}T_n(u_n) M(x)\nabla \psi_n\cdot\nabla u_n 
    +\io f(x,t)g(u_n).
    \end{split}
\end{equation}
    Which implies
    \begin{equation}\label{EQ_stimagrad_Parabell_fL1}
        \langle (u_n)_t,g(u_n)\rangle+\alpha\io\frac{\abs{\nabla u_n}^2}{(u_n+1)^{\lambda}}
        \leq
        \io u_n^{\theta} \frac{(u_n+1)^{2-\lambda}-1}{2-\lambda}+\frac{1}{\lambda-1}\io f(x,t).        
    \end{equation}
    We now focus on the term
    $$\io u_n^{\theta} \frac{(u_n+1)^{2-\lambda}-1}{2-\lambda}.$$
    Let $k\geq0$ and apply the H\"older inequality (with exponent $\frac{N}{2}$) along with Lemma \ref{StimaL1ParabEll} to get
    \begin{equation}\label{eq:proof:stima1**:eq1}
        \begin{split}
            \io u_n^{\theta} &\frac{(u_n+1)^{2-\lambda}-1}{2-\lambda}
    \leq \int_{u_n(t)\leq k} u_n^{\theta} \frac{(u_n+1)^{2-\lambda}-1}{2-\lambda} + 
    \int_{u_n(t)> k} u_n^{\theta} \frac{(u_n+1)^{2-\lambda}-1}{2-\lambda}\\
    &\leq
    k^{\theta}\frac{(k+1)^{2-\lambda}}{2-\lambda}\abs{\Omega}+\frac{1}{2-\lambda}
    \left(\frac{\norma{f}{\elleomT1}}{k^{1-\theta\frac{N}{2}}}\right)^\frac{2}{N}
    \left(\io \left((u_n(t)+1)^{2-\lambda}-1\right)^{\frac{N}{N-2}}\right)^{\frac{N-2}{N}}.
        \end{split}
    \end{equation}
    
    Since, for every $\eta\geq0$, the following inequality holds
    $$
    \left[(1+\eta)^{2-\lambda}-1\right]^{\frac{1}{2}} \leq(1+\eta)^{\frac{2-\lambda}{2}}=\left[(1+\eta)^{\frac{2-\lambda}{2}}-1\right]+1,
    $$
    we have
    $$
    \io\left[\left(1+u_n\right)^{2-\lambda}-1\right]^{\frac{N}{N-2}} 
    \leq 
    \io\left\{\left[\left(1+u_n\right)^{\frac{2-\lambda}{2}}-1\right]+1\right\}^{{\frac{2N}{N-2}}} 
    \leq 
    C \left(1+\io\left[\left(1+u_n\right)^{\frac{2-\lambda}{2}}-1\right]^{{\frac{2N}{N-2}}}\right),
    $$
    for some $C>0$.
    It follows that
        \begin{equation}\label{eq:proof:stima1**:eq3}
        \begin{split}
            \io u_n^{\theta} \left((u_n+1)^{2-\lambda}-1\right)
    \leq 
    k^{\theta+2-\lambda}\abs{\Omega}
    +    \frac{C}{2-\lambda}
    \left(\frac{\norma{f}{\elleomT1}}{k^{1-\theta\frac{N}{2}}}\right)^\frac{2}{N}
    \left(1+\left[\io\left[\left(1+u_n\right)^{\frac{2-\lambda}{2}}-1\right]^{{\frac{2N}{N-2}}}\right]^{\frac{N-2}{N}}\right).
        \end{split}
    \end{equation}

    On the other hand, by the Sobolev inequality, we have
    \begin{equation}\label{eq:proof:stima1**:eq2}
        \begin{split}
            \io \frac{\left|\nabla u_n\right|^2}{\left(1+u_n\right)^\lambda}=
    \frac{4}{(2-\lambda)^2} \int_{\Omega}\left|\nabla\left[\left(1+u_n\right)^{\frac{2-\lambda}{2}}-1\right]\right|^2 \geq 
    \frac{4 \mathcal{S}}{(2-\lambda)^2}\left[\int_{\Omega}\left[\left(1+u_n\right)^{\frac{2-\lambda}{2}}-1\right]^{{\frac{2N}{N-2}}}\right]^{\frac{N-2}{N}}.
        \end{split}
    \end{equation}
    
    Notice that, if we define $\mathcal{G}(s)=\int_0^s g(\sigma)\mathrm{d}\sigma$, the term on the left-hand side of \eqref{EQ_stimagrad_Parabell_fL1} becomes $\frac{d}{dt}\io \mathcal{G}(u_n)$. Thus, putting together \eqref{EQ_stimagrad_Parabell_fL1}, \eqref{eq:proof:stima1**:eq3} and \eqref{eq:proof:stima1**:eq2}, we obtain
    \begin{equation}
    \begin{split}
    \frac{d}{dt}\io \mathcal{G}(u_n) +
    &\left(\alpha -C \frac{(2-\lambda)^2}{4\mathcal{S}}\frac{\norma{f}{\elleomT1}^{\frac{2}{N}}}{(2-\lambda) k^{\frac{2}{N}-\theta}}\right)
    \io \frac{\left|\nabla u_n\right|^2}{\left(1+u_n\right)^\lambda} \\
    & \leq 
    \frac{1}{\lambda-1}\io f(x,t) +
    k^{\theta}\frac{(k+1)^{2-\lambda}}{2-\lambda}\abs{\Omega}
    +\frac{C}{2-\lambda} \frac{\norma{f}{\elleomT 1} ^{\frac{2}{N}}}{k^{\frac{2}{N}-\theta}} .
    \end{split}
    \end{equation}
    Choosing $k=k_0$ such that 
    $$\alpha -C \frac{(2-\lambda)^2}{4\mathcal{S}}\frac{\norma{f}{\elleomT1}^{\frac{2}{N}}}{(2-\lambda) k_0^{\frac{2}{N}-\theta}}=\frac{\alpha}{2},$$
    we have
    $$\frac{d}{dt}\io \mathcal{G}(u_n) +
    \frac{\alpha}{2}\io \frac{\left|\nabla u_n\right|^2}{\left(1+u_n\right)^\lambda}
    \leq 
    \frac{1}{\lambda-1}\io f(x,t) +
    k_0^{\theta}\frac{(k_0+1)^{2-\lambda}}{2-\lambda}\abs{\Omega}
    +\frac{C}{2-\lambda} \frac{\norma{f}{\elleomT 1} ^{\frac{2}{N}}}{k_0^{\frac{2}{N}-\theta}} .
    $$
    Integrating from $0$ to $t$, we obtain
    $$\io \mathcal{G}(u_n)(t) +
    \frac{\alpha}{2}\iot \frac{\left|\nabla u_n\right|^2}{\left(1+u_n\right)^\lambda}
    \leq 
    \frac{1}{\lambda-1}\ioT f(x,t) +
    k_0^{\theta}\frac{(k_0+1)^{2-\lambda}}{2-\lambda}\abs{\Omega}T
    +\frac{C}{2-\lambda} \frac{\norma{f}{\elleomT 1} ^{\frac{2}{N}}}{k_0^{\frac{2}{N}-\theta}}T,
    $$
    which is
    \begin{equation}\label{eq:proof:stima1**:eq3.5}
        \io \mathcal{G}(u_n)(t) +
    \frac{\alpha}{2}\iot \frac{\left|\nabla u_n\right|^2}{\left(1+u_n\right)^\lambda}
    \leq C\left(\|f\|_{L^1(\Omega_T)}, \lambda\right),
    \end{equation}
    where $C\left(\|f\|_{L^1(\Omega_T)},\lambda\right)$ is a positive constant which tends to infinity as $\lambda\to1^+$.

    Since $\mathcal{G}(s)=s+(1+s)^{2-\lambda}-1$ (which, for $s\geq0$, is greater than $s^{2-\lambda}$) and $u_n\geq0$, we have, up to increasing the constant $C\left(\|f\|_{L^1(\Omega_T)},\lambda\right)$,
    \begin{equation}\label{DoppiaStima-Lemma-StimaconfL1}
        \left\{\begin{aligned}
        & \norma{(u_n+1)^{2-\lambda}-1}{\elle\infty(0,T;\elleom1)}\leq C\left(\|f\|_{L^1(\Omega_T)}, \lambda\right)\\
        & \norma{\nabla \left((u_n+1)^\frac{2-\lambda}{2}-1\right)}{\elleomT2}^2\leq C\left(\|f\|_{L^1(\Omega_T)}, \lambda\right).
        \end{aligned}\right.
    \end{equation}
    Recall that, by Proposition \eqref{GNSparabTheo}, there exists $C=C(N,\Omega)>0$ such that
    $$\norma{(u_n+1)^\frac{2-\lambda}{2}-1}{\elleomT{2\frac{N+2}{N}}}
    \leq\norma{(u_n+1)^\frac{2-\lambda}{2}-1}{\elle\infty(0,T;\elleom2)}^\frac{4}{N}
    \norma{\nabla \left((u_n+1)^\frac{2-\lambda}{2}-1\right)}{\elleomT2}^2.
    $$
    Moreover, since $\frac{2-\lambda}{2}\in (0,1)$ and $u_n\geq0$ we have, by concavity,
    $$(u_n+1)^\frac{2-\lambda}{2}-1\leq u_n^\frac{2-\lambda}{2},$$
    which implies
    $$
    \norma{(u_n+1)^\frac{2-\lambda}{2}-1}{\elle\infty(0,T;\elleom2)}
    \leq 
    \norma{u_n^\frac{2-\lambda}{2}}{\elle\infty(0,T;\elleom2)}
    \leq
    \norma{u_n^{2-\lambda}}{\elle\infty(0,T;\elleom1)}^\frac{1}{2}.$$
    By convexity, since $2-\lambda>1$, we get
    $$\norma{(u_n+1)^\frac{2-\lambda}{2}-1}{\elle\infty(0,T;\elleom2)}
    \leq 
    \norma{u_n^{2-\lambda}}{\elle\infty(0,T;\elleom1)}^\frac{1}{2}
    \leq
    \norma{(u_n+1)^{2-\lambda}-1}{\elle\infty(0,T;\elleom1)}^\frac{1}{2}.$$
    To sum up, we have proved that, for any $\lambda\in(1,2)$, there exists a positive constant $C=C\left(\norma{f}{\elleomT1},\lambda,N,\Omega\right)$ such that
    \begin{equation}\label{eq:proof:stima1**:eq4}
        \norma{(u_n+1)^\frac{2-\lambda}{2}-1}{\elleomT{2\frac{N+2}{N}}}
    \leq C,
    \end{equation}
    which implies that the sequence $(u_n^\frac{2-\lambda}{2})_n$ is bounded in $\elleomT{2\frac{N+2}{N}}$. This means that $(u_n)_n$ is bounded in $\elleomT{{2-\lambda}\frac{N+2}{N}}$ for any $\lambda\in(1,2)$, i.e. $(u_n)_n$ is bounded in $\elleomT p$ for any $p<\frac{N+2}{N}$.

    We now follow the argument in the proof of Lemma \ref{lemma:u_nW1m*_KS_parabell}. Choosing $q<2$ and using the H\"older inequality, we obtain 
    $$\ioT \abs{\nabla u_n}^q =
    \ioT \frac{\abs{\nabla u_n}^q}{(1+u_n)^{\frac{q\lambda}{2}}}(1+u_n)^{\frac{q\lambda}{2}} 
    \leq
    \left(\ioT \frac{\abs{\nabla u_n}^2}{(1+u_n)^{\lambda}}\right)^\frac{q}{2}
    \left(\ioT (1+u_n)^{\frac{q\lambda}{2-q}}\right)^{1-\frac{q}{2}},$$
    thus, by \eqref{eq:proof:stima1**:eq3.5}, we get
    $$\ioT \abs{\nabla u_n}^q\leq 
     \frac{2\,C\left(\norma{f}{\elleomT1}, \lambda\right)}{\alpha}
    \left(\ioT (1+u_n)^{\frac{q\lambda}{2-q}}\right)^{1-\frac{q}{2}}.$$

    Setting $\frac{q\lambda}{2-q}={2-\lambda}\frac{N+2}{N}$ and recalling \eqref{eq:proof:stima1**:eq4}, we conclude that $(\nabla u_n)_n$ is bounded in $\elleomT q$ with
    $$q=q(\lambda)=\frac{2(2-\lambda)(N+2)}{\lambda N +(2-\lambda)(N+2)}.$$
    Note that $q(\lambda)$ is always smaller that $\frac{N+2}{N+1}$ and tends to $\frac{N+2}{N+1}$ as $\lambda\to1$. This concludes the proof.
\end{proof}

{To prove the existence of an entropy solution, it is necessary to pass to the limit in the integral
$$\ioT T_n(u_n)M(x)\nabla\psi_n \nabla T_k (u_n-\varphi).$$
To do so, the weak convergence of the sequence $(u_n)_n$ is not sufficient. Thus, we employ a technique of \cite{PorrettaExistenceParabolic} to prove the convergence of smooth truncations of $(u_n)_n$. This, in turn, implies the strong convergence of $(u_n)_n$ in $\elleomT1$.}

\begin{lemma}\label{lemma:ae_convergence_un}
    The sequence $(u_n)_n$ converges, up to subsequences, almost--everywhere in $\Omega_T$.
\end{lemma}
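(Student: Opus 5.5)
We will apply a time-compactness argument of Aubin--Lions--Simon type, not to $u_n$ itself but to smooth truncations of it, following the strategy of \cite{PorrettaExistenceParabolic} (see also \cite{Prignet-entropy}). Fix $k>0$ and let $S_k\in C^\infty(\R)$ be a nondecreasing function with $S_k(s)=s$ for $\abs{s}\leq k$, $S_k$ constant for $\abs{s}\geq 2k$, and $S_k'$, $S_k''$ bounded with support in $[-2k,2k]$. The plan is to show that, for every fixed $k$, the sequence $(S_k(u_n))_n$ is relatively compact in $\elleomT1$. We then extract a diagonal subsequence converging a.e. for all $k\in\N$, and conclude using the $\elle\infty(0,T;\elleom1)$ bound of Lemma \ref{StimaL1ParabEll}.

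\emph{Step 1 (spatial bound).} Since $S_k(u_n)=S_k(T_{2k}(u_n))$ and $\nabla S_k(u_n)=S_k'(u_n)\nabla T_{2k}(u_n)$, Lemma \ref{lemma:Bound_Troncate} shows that $(S_k(u_n))_n$ is bounded in $\elle2(0,T;\Hom)\cap\elleomT\infty$.

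\emph{Step 2 (time derivative bound).} By the chain rule, which is legitimate because $u_n$ is a weak solution with $(u_n)_t\in\elle2(0,T;H^{-1})$ and $S_k'$ is Lipschitz,
\[
(S_k(u_n))_t=\operatorname{div}\bigl(S_k'(u_n)A\nabla u_n\bigr)-S_k''(u_n)A\nabla u_n\cdot\nabla u_n-\operatorname{div}\bigl(S_k'(u_n)T_n(u_n)M\nabla\psi_n\bigr)+S_k''(u_n)T_n(u_n)M\nabla\psi_n\cdot\nabla u_n+S_k'(u_n)T_n(f).
\]
Every term is localized on $\{u_n\leq 2k\}$, so $\nabla u_n$ may be replaced by $\nabla T_{2k}(u_n)$ and $T_n(u_n)$ by a quantity bounded by $2k$. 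By Lemma \ref{lemma:Bound_Troncate} and Lemma \ref{lemma:psin_bounded_unif}, the two divergence terms are bounded in $\elle2(0,T;H^{-1}(\Omega))$. The terms containing $S_k''$ are bounded in $\elleomT1$, using Cauchy--Schwarz on $\nabla\psi_n\cdot\nabla T_{2k}(u_n)$. The source term is bounded in $\elleomT1$. Hence $((S_k(u_n))_t)_n$ is bounded in $\elle2(0,T;H^{-1}(\Omega))+\elleomT1$, and therefore in $\elle1(0,T;H^{-s}(\Omega))$ for $s$ large, because $\elleom1\hookrightarrow H^{-s}(\Omega)$ when $s>N/2$.

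\emph{Step 3 (compactness).} Apply \cite[Corollary 4]{Simon} with the chain $\Hom\hookrightarrow\hookrightarrow\elleom2\hookrightarrow H^{-s}(\Omega)$. This gives that, for each $k$, $(S_k(u_n))_n$ is relatively compact in $\elleomT2$, hence a subsequence converges a.e. A diagonal argument over $k\in\N$ produces a single subsequence such that $S_k(u_n)\to v_k$ a.e. for all $k$.

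\emph{Step 4 (a.e. convergence of $u_n$).} Show that $(u_n)$ is Cauchy in measure. For $\sigma>0$,
\[
\abs{\{\abs{u_n-u_m}>\sigma\}}\leq\abs{\{u_n>k\}}+\abs{\{u_m>k\}}+\abs{\{\abs{S_k(u_n)-S_k(u_m)}>\sigma\}}.
\]
By Lemma \ref{StimaL1ParabEll}, $\abs{\{u_n>k\}}\leq T\norma{f}{\elleomT1}/k$, which is uniformly small for $k$ large. With $k$ fixed, the last term is small for $n,m$ large, since $S_k(u_n)$ converges a.e. and thus in measure on the finite-measure set $\Omega_T$. It follows that a further subsequence converges a.e. to some measurable $u$. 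By Fatou's Lemma, $u$ is finite a.e.; combined with Lemma \ref{lemma:stima1**}, Vitali's theorem then gives $u_n\to u$ strongly in $\elleomT1$.

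The main obstacle is Step 2, specifically the convection contributions. We must verify that the localization really keeps $T_n(u_n)\leq 2k$ and that the mixed term $S_k''(u_n)T_n(u_n)M\nabla\psi_n\cdot\nabla u_n$ is controlled in $\elleomT1$ uniformly in $n$. This holds because the truncation bound, Lemma \ref{lemma:Bound_Troncate}, and the $\elle\infty(0,T;\elleom2)$ bound on $\nabla\psi_n$ enter together. The rigorous justification of the chain rule for $(S_k(u_n))_t$ may also need a short density argument.
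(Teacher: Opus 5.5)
Your proposal is correct and follows essentially the same route as the paper: compactness of smooth truncations of $u_n$ via Lemma \ref{lemma:Bound_Troncate}, Lemma \ref{lemma:psin_bounded_unif} and \cite[Corollary 4]{Simon}, followed by showing that $(u_n)_n$ is Cauchy in measure. The differences are cosmetic: you use $H^{-s}(\Omega)$ instead of $W^{-1,1}(\Omega)$ and make the Chebyshev bound explicit via Lemma \ref{StimaL1ParabEll}. Your diagonal extraction over $k\in\N$ also makes precise a step the paper leaves implicit, since the Cauchy-in-measure argument needs a single subsequence along which the truncations converge for every $k$.
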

\begin{proof}
    Consider a smooth truncation function $\mathcal{T}_k(s)$ such that:
    \begin{itemize}
        \item $\mathcal T_k(s)=s$ whenever $\abs{s}\leq \frac{k}{2}$;
        \item $\mathcal T_k(s)=k$ whenever $\abs{s}\geq k$;
        \item $0\leq \mathcal{T}_k'(s)\leq 2\rchi_{[-k,k]}(s).$
    \end{itemize}
    Then, choose $\mathcal{T}'_k(u_n)\varphi$, with $\varphi$ smooth, as a test function in the first equation of \eqref{eq:approx_KS_system} to obtain
    \begin{equation}
        \begin{split}   
            \int_0^T\langle (\mathcal{T}_k(u_n))_t,\varphi\rangle +& \ioT A(x,t)\nabla u_n \mathcal{T}'_k(u_n)\nabla \varphi +\ioT A(x,t)\nabla u_n \mathcal{T}''_k(u_n)\nabla u_n \varphi\\
            = &\ioT T_n(u_n) M(x)\nabla\psi_n \mathcal{T}'_k(u_n)\nabla \varphi \\&+ \ioT T_n(u_n) M(x)\nabla\psi_n \mathcal{T}''_k(u_n)\nabla u_n \varphi + \ioT T_n(f)\mathcal{T}'_k(u_n)\varphi,
        \end{split}
    \end{equation}
    which can be written, in the sense of distributions, as
    \begin{equation}
        \begin{split}
            (\mathcal{T}_k(u_n))_t -&\operatorname{div}( A(x,t)\nabla\mathcal{T}_k(u_n))+A(x,t)\nabla u_n\cdot \nabla u_n \mathcal{T}''_k(u_n) \\=
            -\operatorname{div}&(T_n(u_n) M(x)\nabla\psi_n \mathcal{T}'_k(u_n))\\&+ T_n(u_n) M(x)\nabla\psi_n \cdot \nabla u_n  \mathcal{T}''_k(u_n) + T_n(f)\mathcal{T}'_k(u_n).
        \end{split}
    \end{equation}
    By the assumption on $\mathcal{T}'_k(s)$, the term $T_n(u_n)\mathcal{T}'_k(u_n)$ is bounded in $\elleomT\infty$ and $\abs{\mathcal{T}'_k(u_n)}\leq2\rchi_{\{\abs{u_n}\leq k\}}$. This estimate, along with Lemma \ref{lemma:Bound_Troncate} and Lemma \ref{lemma:psin_bounded_unif} (specifically, the boundedness of $(\psi_n)_n$ in $\elle2(0,T;\Hom)$), implies that $(\mathcal{T}_k(u_n))_t$ is bounded in $\elle1(0,T;W^{-1,1}(\Omega))$ with respect to $n$. At the same time, Lemma \ref{lemma:Bound_Troncate} implies that the sequence $(\mathcal{T}_k(u_n))_n$ is bounded in $\elle2(0,T;\Hom)$ with respect to $n$. We can thus apply \cite[Corollary 4]{Simon} to conclude that $(\mathcal{T}_k(u_n))_n$ is compact in $\elleomT2$.
    It follows that, for every $k\geq0$ there exists a subsequence of $(\mathcal{T}_k(u_n))_n$ which converges in $\elleomT2$ and almost--everywhere.

    We can now prove that the sequence $(u_n)_n$ is Cauchy in measure. Let $\lambda>0$ and notice that
    $$\abs{\left\{ \abs{u_n-u_m}\geq \lambda \right\}}\leq \abs{\left\{ \abs{u_n}\geq \frac{k}{2} \right\}}+\abs{\left\{ \abs{u_m}\geq \frac{k}{2} \right\}}+\abs{\left\{ \abs{\mathcal {T}_k (u_n)-\mathcal {T}_k (u_m)}\geq \lambda \right\}}.$$
    Since the sequence $(u_n)_n$ is bounded in some Lebesgue space, the first two terms on the right-hand side are arbitrarily small choosing $k$ large enough, while the last term tends to zero as $n$ and $m$ tend to infinity for any chosen $k$. It follows that the sequence $(u_n)_n$ converges in measure, which implies the almost--everywhere convergence up to subsequences.
\end{proof}

\begin{cor}\label{cor:strongconvL1}
    Let $u$ be a weak limit of $(u_n)_n$ in $\elleomT s$ for some $s>1$. Then, up to subsequences, $u_n\to u$ strongly in $\elleomT1$.
\end{cor}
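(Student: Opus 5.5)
The plan is to combine the almost-everywhere convergence of Lemma \ref{lemma:ae_convergence_un} with a uniform integrability argument, that is, Vitali's theorem. We work with a subsequence of $(u_n)_n$ that converges weakly to $u$ in $\elleomT s$. By Lemma \ref{lemma:ae_convergence_un} we may extract a further subsequence, still denoted $(u_n)_n$, converging almost everywhere in $\Omega_T$ to some measurable function $\tilde u$. It then remains to show two things: that $u_n\to\tilde u$ in $\elleomT1$, and that $\tilde u=u$.

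For the first point, note that a sequence weakly convergent in $\elleomT s$ is bounded there, say $\norma{u_n}{\elleomT s}\leq C$. This uniform bound is also available directly from the earlier a priori estimates: Lemma \ref{lemma:stima_m**_unif}, Lemma \ref{lemma:stima_m**_unif:infinite_energy} and Lemma \ref{lemma:stima1**} give some $s>1$ in every range of $m$. For any measurable $E\subset\Omega_T$, the H\"older inequality gives
\[
\int_E \abs{u_n}\leq \norma{u_n}{\elleomT s}\abs{E}^{1-\frac1s}\leq C\abs{E}^{1-\frac1s}.
\]
Hence the family $(u_n)_n$ is equi-integrable. Since $\Omega_T$ has finite measure, Fatou's lemma shows that $\tilde u\in\elleomT s\subset\elleomT1$. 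Vitali's convergence theorem then yields $u_n\to\tilde u$ strongly in $\elleomT1$.

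For the identification $\tilde u=u$, strong convergence in $\elleomT1$ implies convergence in the sense of distributions, as does weak convergence in $\elleomT s$. Testing both against $\varphi\in\elleomT\infty\subset\elleomT{s'}$ gives
\[
\ioT u\varphi=\lim_n\ioT u_n\varphi=\ioT\tilde u\varphi
\]
for every such $\varphi$, so $u=\tilde u$ almost everywhere. This completes the argument.

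There is no genuine obstacle in this proof. The only point requiring care is the order of extraction: the almost-everywhere convergent subsequence must be taken inside the weakly convergent one, so that both limits refer to the same sequence.
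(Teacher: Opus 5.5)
Your proof is correct and follows the route the paper intends. The paper leaves the corollary unproved, as an immediate consequence of Lemma \ref{lemma:ae_convergence_un}: almost-everywhere convergence, together with equi-integrability from the uniform $L^s$ bound with $s>1$, gives strong $L^1$ convergence by Vitali's theorem, and the a.e. limit is then identified with the weak limit. Your care in extracting the a.e.-convergent subsequence inside the weakly convergent one is justified, because the proof of Lemma \ref{lemma:ae_convergence_un} uses only bounds uniform in $n$ and therefore applies to any subsequence.
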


We can now prove Theorem \ref{Main_Theorem_4}. 
\begin{proof}[Proof of Theorem \ref{Main_Theorem_4}]
We start by proving the existence of an entropy solution $(u,\psi)$ to \eqref{eq:KS-Sys-ParabEll}. 
    Let $k\geq 0$ and $\varphi\in\elleomT\infty\cap\elle2(0,T;\Hom)$ such that $\varphi_t\in \elleomT 1+ \elle2(0,T;H^{-1}(\Omega))$ and $\varphi(0)=0$. We choose $T_k(u_n-\varphi)$ as a test function in the first equation of \eqref{eq:approx_KS_system} to obtain
    \begin{multline}\label{Passaggio1-EsisteEntropy-KS-Parabell}
        \int_0^T\langle (u_n)_t,T_k(u_n-\varphi)\rangle + \iot A(x,s)\nabla u_n\cdot \nabla T_k(u_n-\varphi) \\
        =\iot T_n(u_n) M(x)\nabla \psi_n\cdot\nabla T_k(u_n-\varphi) + \iot T_n(f)T_k(u_n-\varphi).
    \end{multline}
    We will now examine each term of \eqref{Passaggio1-EsisteEntropy-KS-Parabell}. First, notice that 
    $$\int_0^t\langle (u_n)_t,T_k(u_n-\varphi)\rangle=\io \Theta_k(u_n-\varphi)(t)+\int_0^t\langle \varphi_t,T_k(u_n-\varphi)\rangle.$$
    Then, by the almost--everywhere convergence of the sequence $(u_n)_n$, we have that $\Theta_k(u_n-\varphi)(t)\to\Theta_k(u-\varphi)(t)$ almost--everywhere (which allows to to apply Fatou's Lemma, since $\Theta_k$ is a positive function). Also, notice that
    \[\int_0^t\langle \varphi_t,T_k(u_n-\varphi)\rangle\]
    passes to the limit as $n\to+\infty$ by the weak convergence in $\elle2(0,T;\Hom)$ and the $*-$weak convergence in $\elleomT\infty$ of $T_k(u_n-\varphi)$ to $T_k(u-\varphi)$.

    In the second term of \eqref{Passaggio1-EsisteEntropy-KS-Parabell}, we proceed similarly:
    $$
    \iot A(x,s)\nabla u_n\cdot \nabla T_k(u_n-\varphi) = \iot A(x,s)\nabla (u_n-\varphi)\cdot \nabla T_k(u_n-\varphi) + \iot A(x,s)\nabla \varphi\cdot \nabla T_k(u_n-\varphi).
    $$
    Notice that every integral in well-defined, since the right-hand side is equal to 
    $$
    \iot A(x,s)\nabla T_k(u_n-\varphi)\cdot \nabla T_k(u_n-\varphi) + \iot A(x,s)\nabla \varphi\cdot \nabla T_k(u_n-\varphi).
    $$
    This allows us to take the liminf in the first term by weak lower semicontinuity of the quadratic form induced by $A(x,t)$ on $\elle2(0,T;\Hom)$ and the limit (by weak convergence) in the second term.

    For the third term of \eqref{Passaggio1-EsisteEntropy-KS-Parabell}, we have 
    $$\iot T_n(u_n) M(x)\nabla \psi_n\cdot\nabla T_k(u_n-\varphi)=\int_{\Omega_t\cap\{\abs{u_n-\varphi}\leq k\}} T_n(u_n) M(x)\nabla \psi_n\cdot\nabla T_k(u_n-\varphi).$$
    Notice that, since $\varphi\in\elleomT\infty$, the integral is computed only where $$\abs{u_n}\leq k+\norma{\varphi}{\elleomT\infty}=:M.$$
    Thus, if $n\geq M$, we can write it as
    $$\int_{\Omega_t\cap\{\abs{u_n-\varphi}\leq k\}} T_n(T_M(u_n)) M(x)\nabla \psi_n\cdot\nabla T_k(u_n-\varphi)=
    \iot T_M(u_n) M(x)\nabla \psi_n\cdot\nabla T_k(u_n-\varphi).$$
    Note that, by Vitali's theorem and Corollary \ref{cor:strongconvL1}, the sequence $u_n^\theta$ is strongly convergent in $\elleomT p$, with $p=\frac1\theta>\frac{N}{2}$. Thus, by reasoning as in the proof of Theorem \ref{Main_Theorem_2}, we prove that (up to subsequences):
    \begin{itemize}
        \item $\psi_n\to\psi$ in $\elle\infty(0,T;\Hom)$ (which also implies that $\psi\in C^0([0,T];\Hom)$);
        \item $\psi_n\to\psi$ in $\elle\infty(\Omega_T)$ (which also implies that $\psi\in \elle\infty(\Omega_T)$).
    \end{itemize}
    It follows that
    $$ T_M(u_n) M(x)\nabla \psi_n\to T_M(u)M(x)\nabla\psi\quad\text{ in } \left(\elleomT 2\right)^N,$$
    which allows us to pass to the limit as $n\to\infty$ in the third term of \eqref{Passaggio1-EsisteEntropy-KS-Parabell}. The last term of \eqref{Passaggio1-EsisteEntropy-KS-Parabell} poses no difficulties in the passage to the limit.

    We can now pass to the limit as $n\to+\infty$ along a suitable subsequence to conclude that, for almost every $t\in(0,T)$, we have
    \begin{multline}
        \io \Theta_k(u-\varphi)(t)+\int_0^t \langle (\varphi)_t,T_k(u-\varphi)\rangle+
        \iot A(x,s)\nabla u\cdot \nabla T_k(u-\varphi)
        \\\quad  \leq\iot u M(x)\nabla \psi\cdot\nabla T_k(u-\varphi) + \iot f(x,s)T_k(u-\varphi),
    \end{multline}
    which, along with Lemma \ref{lemma:Bound_Troncate}, proves that $(u,\psi)$ is an entropy solution to \eqref{eq:KS-Sys-ParabEll}. To conclude, we observe that the bounds on the sequence $(u_n)_n$ in Lemmata \ref{StimaL1ParabEll}, \ref{lemma:stima_m**_unif:infinite_energy}, \ref{lemma:u_nW1m*_KS_parabell} and \ref{lemma:stima1**} also hold for $u$.
\end{proof}

\section*{Acknowledgements}

I would like to thank Luigi Orsina for his meticulous and insightful reading of this manuscript, and the anonymous reviewer for their careful examination of all the details, which greatly contributed to improving the clarity and accuracy of the paper. I am a member of the GNAMPA group of INdAM.

\end{document}